\renewcommand{\injlim}{\varinjlim}
\crefname{equation}{}{}
\crefname{enumi}{}{}
\newlist{conenum}{enumerate}{1}
\setlist[conenum,1]{label=(\roman*),ref=\roman*}
\crefname{conenumi}{}{}
\numberwithin{equation}{section}
\theoremstyle{plain}
\newtheorem{Theorem}{Theorem}
\crefname{Theorem}{Theorem}{Theorems}
\newtheorem{conjecture}[equation]{Conjecture}
\newtheorem{corollary}[equation]{Corollary}
\newtheorem{lemma}[equation]{Lemma}
\newtheorem{proposition}[equation]{Proposition}
\newtheorem{theorem}[equation]{Theorem}
\theoremstyle{definition}
\newtheorem{definition}[equation]{Definition}
\newtheorem{example}[equation]{Example}
\theoremstyle{remark}
\newtheorem{remark}[equation]{Remark}
\let\oldSS\SS\let\SS\relax
\newcommand{\SS}{\mathbf{S}}
\newcommand{\Ste}{\textnormal{Ste}}
\newcommand{\ic}{\textnormal{ic}}
\newcommand{\lex}[1]{#1\textnormal{-lex}}
\newcommand{\pr}{\textnormal{pr}}
\newcommand{\st}{\textnormal{st}}
\newcommand{\Alg}{\operatorname{Alg}}
\newcommand{\CAlg}{\operatorname{CAlg}}
\newcommand{\End}{\operatorname{End}}
\newcommand{\Fun}{\operatorname{Fun}}
\newcommand{\Hom}{\operatorname{Hom}}
\newcommand{\IND}{\operatorname{IND}}
\newcommand{\Ind}{\operatorname{Ind}}
\newcommand{\LMod}{\operatorname{LMod}}
\newcommand{\Map}{\operatorname{Map}}
\newcommand{\Mod}{\operatorname{Mod}}
\newcommand{\PShv}{\operatorname{PShv}}
\newcommand{\cofib}{\operatorname{cofib}}
\newcommand{\op}{\operatorname{op}}
\newcommand{\Kappa}{\mathfrak{K}}
\newcommand{\X}{\mathord{-}}
\newcommand{\unit}{\mathbf{1}}
\newcommand{\Cat}[1]{\mathsf{#1}}
\newcommand{\cat}[1]{\mathscr{#1}}
\title[Higher presentable categories]{Higher presentable categories and limits}
\author{Ko Aoki}
\address{Max Planck Institute for Mathematics,
  Vivatsgasse 7, 53111 Bonn, Germany
}
\email{aoki@mpim-bonn.mpg.de}
\date{\today}
\begin{document}

\begin{abstract}
  Stefanich generalized
  the notion of (locally) presentable \((\infty,1)\)-category
  to the notion of presentable \((\infty,n)\)-category.
  We give a new description
  based on the new notion of
  \(\kappa\)-compactly generated \((\infty,n)\)-category,
  which avoids universe enlargement.

  Using the new definition,
  we prove the underlying functor
  of a morphism between presentable \((\infty,2)\)-categories
  has a right adjoint.
  In particular,
  any presentable \((\infty,2)\)-category has limits.
  We also prove that this fails drastically
  when we go higher:
  The unit presentable \((\infty,3)\)-category,
  i.e., the category of presentable \((\infty,2)\)-categories,
  does not have limits.
  This settles Stefanich’s conjecture in the negative.
\end{abstract}

\maketitle
\setcounter{tocdepth}{1}
\tableofcontents

\section{Introduction}\label{s:intro}

In this paper,
“category” means an \((\infty,1)\)-category,
and
“\(n\)-category” means an \((\infty,n)\)-category.

As observed in, e.g.,
Giraud’s characterization in topos theory,
the notion of presentability,
or simply the sense of “being generated by small objects”
of a large category,
plays a fundamental role.
We refer the reader to~\cite[Section~5.5]{LurieHTT}
for a foundation of the theory of presentable categories.
One powerful tool is the adjoint functor theorem,
which asserts the underlying functor
of any morphism between presentable categories
has a right adjoint.
This in particular implies the existence of limits;
given that
presentability is defined in terms of colimits,
this is nontrivial:

\begin{example}\label{xqud1s}
  The category freely generated by small colimits
  on a proper class
  does not have small limits.
  Note that it is still locally small.
\end{example}

It would be useful to have
a similar theory in higher category theory.
We seek a notion of presentable \(n\)-category\footnote{The term “presentable \(n\)-category”
  is sometimes used to mean
  a presentable \((n-1)\Cat{Cat}\)-enriched category.
  This is a different notion unless \(n=1\).
} for \(n\geq0\).
Naively, the category of presentable \(2\)-categories~\(2\Cat{Pr}\)
should be the category
of “presentable \(\Cat{Pr}\)-enriched categories,”
and we should similarly get \(n\Cat{Pr}\) by repeating this process.
However,
since
the category of presentable \(1\)-categories~\(\Cat{Pr}\)
itself is not presentable,
we run into a size issue.
To avoid this,
Stefanich~\cite{Stefanich} devised an ingenious solution,
which we recall in \cref{ss:orig}.
One downside of his approach
is that it uses the enlargement of the universe.
Pedantically, the existence of such an enlargement
is a strong axiom whose relative consistency over Zermelo–Fraenkel set theory
is not provable,
but this is widely accepted in subjects like algebraic geometry.
Still,
even assuming that it is possible,
two different enlargement of the universe
could yield two a~priori different notions of a presentable \(n\)-category.
Moreover, from his presentation,
certain problems are difficult to study.
We take advantage of our new approach
to disprove his conjecture; see below.

This paper is about the following new definition;
see \cref{ss:new} for a detailed explanation:

\begin{definition}\label{x2r4as}
  For a regular cardinal~\(\kappa\),
  let \(\Cat{Pr}^{\kappa}\)
  be the category
  of \(\kappa\)-compactly generated categories
  and functors preserving colimits and \(\kappa\)-compact objects.
  We define \(0\Cat{Pr}^{\kappa}\)
  to be the category of animas (aka homotopy types)~\(\Cat{Ani}\)
  and
  \((n+1)\Cat{Pr}^{\kappa}=\Mod_{n\Cat{Pr}^{\kappa}}\Cat{Pr}^{\kappa}\)
  for \(n\geq0\).
  We call it the category
  of \emph{\(\kappa\)-compactly generated \(n\)-categories}.
  For regular cardinals \(\kappa\leq\lambda\),
  we get a canonical functor \(n\Cat{Pr}^{\kappa}\to n\Cat{Pr}^{\lambda}\).
  We consider
  \begin{equation*}
    n\Cat{Pr}=\injlim_{\kappa}n\Cat{Pr}^{\kappa}
  \end{equation*}
  and call it the category of \emph{presentable \(n\)-categories}.
\end{definition}

This recovers the original definition:

\begin{Theorem}\label{comparison}
  With any enlargement of the universe,
  \cref{x2r4as} coincides with Stefanich’s definition.
  In particular,
  his definition
  does not depend on the enlargement.
\end{Theorem}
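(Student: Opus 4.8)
I would prove \cref{comparison} by induction on $n$, establishing at each stage not merely an equivalence but a symmetric monoidal equivalence---indeed an equivalence of commutative algebra objects, so that $n\Cat{Pr}\in\CAlg(\Cat{Pr})$ of \cref{x2r4as} corresponds to Stefanich's $n\Cat{Pr}^{\mathrm{St}}$. The monoidal refinement is essential, since it is what allows the equivalence to feed into the module construction at the next stage. Fix a base universe $\mathcal U$ and an enlargement $\widehat{\mathcal U}$, and recall from \cref{ss:orig} that inside $\widehat{\mathcal U}$ Stefanich's category is built by the parallel recursion $(n+1)\Cat{Pr}^{\mathrm{St}}=\Mod_{n\Cat{Pr}^{\mathrm{St}}}\widehat{\Cat{Pr}}$, where $\widehat{\Cat{Pr}}=\Cat{Pr}^{L}$ is the category of $\mathcal U$-presentable categories and left adjoints, viewed as a presentable object of $\widehat{\mathcal U}$. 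The base case $n=0$ is immediate: the diagram $\kappa\mapsto 0\Cat{Pr}^{\kappa}$ is constant with value $\Cat{Ani}$, which is also the unit of $\widehat{\Cat{Pr}}$, so both sides are $\Cat{Ani}$.

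\textbf{The case $n=1$.}
Here $1\Cat{Pr}^{\kappa}=\Cat{Pr}^{\kappa}$ and $\Cat{Pr}=\injlim_{\kappa}\Cat{Pr}^{\kappa}$, and I would identify this with $\widehat{\Cat{Pr}}=\Cat{Pr}^{L}$. Essential surjectivity is exactly the statement that every presentable category is $\kappa$-compactly generated for some regular $\kappa$, which is in \cite[Section~5.5]{LurieHTT}. For the mapping spaces, note first that the transition functor $\Cat{Pr}^{\kappa}\to\Cat{Pr}^{\lambda}$ is well defined on morphisms: a colimit-preserving functor preserving $\kappa$-compact objects preserves $\lambda$-compact objects for $\lambda\ge\kappa$, since the latter are retracts of $\lambda$-small colimits of the former. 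Thus ``preserving $\kappa$-compact objects'' is a property cutting out a union of connected components of the space $\Fun^{L}(\mathcal C,\mathcal D)$ of left adjoints; these components grow with $\kappa$, and their union is all of $\Fun^{L}(\mathcal C,\mathcal D)$, because every left adjoint between presentable categories preserves $\mu$-compact objects for $\mu$ large (its right adjoint is accessible). Passing to the filtered colimit therefore recovers $\Map_{\Cat{Pr}^{L}}(\mathcal C,\mathcal D)$, yielding the equivalence $\Cat{Pr}\simeq\widehat{\Cat{Pr}}$, which one checks is symmetric monoidal for Lurie's tensor.

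\textbf{Inductive step.}
Assume the symmetric monoidal equivalences $\Cat{Pr}\simeq\widehat{\Cat{Pr}}$ and $n\Cat{Pr}\simeq n\Cat{Pr}^{\mathrm{St}}$. The extension-of-scalars functors along $n\Cat{Pr}^{\kappa}\to n\Cat{Pr}$, together with the monoidal functors $\Cat{Pr}^{\kappa}\to\Cat{Pr}$, assemble into a canonical comparison
\[
  \injlim_{\kappa}\Mod_{n\Cat{Pr}^{\kappa}}\Cat{Pr}^{\kappa}\longrightarrow\Mod_{n\Cat{Pr}}\Cat{Pr},
\]
whose source is $(n+1)\Cat{Pr}$ by \cref{x2r4as}. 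Granting that this comparison is an equivalence, the inductive hypothesis identifies the target with $\Mod_{n\Cat{Pr}^{\mathrm{St}}}\widehat{\Cat{Pr}}=(n+1)\Cat{Pr}^{\mathrm{St}}$, completing the induction. Since the source refers only to $\mathcal U$ and not to the enlargement, this simultaneously proves that Stefanich's definition is independent of the chosen $\widehat{\mathcal U}$, as asserted in \cref{comparison}.

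\textbf{Main obstacle.}
The crux is the displayed comparison, and I do not expect it to be formal: $\Mod_{R}\mathcal C$ is a totalization-type construction, so it need not commute with a filtered colimit in which both the algebra $n\Cat{Pr}^{\kappa}$ and the ambient category $\Cat{Pr}^{\kappa}$ vary. I would instead prove it directly, by the compact-generation method of the $n=1$ case. For essential surjectivity, given an $n\Cat{Pr}$-module $M$ in $\Cat{Pr}$, the action $n\Cat{Pr}\otimes M\to M$ restricts to $n\Cat{Pr}^{\kappa}\otimes M\to M$ for every $\kappa$; the point is that a single large $\kappa$ can be chosen so that this exhibits $M$ as a $\kappa$-compactly generated $n\Cat{Pr}^{\kappa}$-module, i.e.\ so that the action preserves $\kappa$-compact objects and these are closed under it. For full faithfulness, a morphism of modules preserves $\mu$-compact objects for $\mu$ large, and its coherent compatibility with the actions is likewise controlled by one cardinal, so the target mapping spaces are again the filtered unions of the relevant components. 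The heart of the matter---and the step I expect to require genuine work---is this uniform accessibility: the coherent action datum of an object, encoded by the bar diagram of the $n\Cat{Pr}$-action, and the coherence datum of a morphism, are assembled from accessible functors and can therefore each be bounded by a single regular cardinal. I would isolate this as a lemma on the module categories internal to the $\Cat{Pr}^{\kappa}$ and prove it by the standard accessibility bookkeeping.
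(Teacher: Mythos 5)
Your high-level plan---induction on $n$, upgrading the equivalence to the level of commutative algebras so it can feed the next module stage, and isolating as the crux a commutation of the module construction with the colimit over $\kappa$---does match the shape of the paper's proof of \cref{xmhmpq}, and your $n=1$ case is essentially \cref{xjl7pp} (cf.\ \cref{xm4yc7}). However, there is a gap that precedes any of the technical work: you have misstated Stefanich's definition, which is the very thing \cref{comparison} compares against. By \cref{xi91go}, Stefanich sets $(n+1)\Cat{Pr}^{\Ste}=\Mod_{\pr}(n\Cat{Pr}^{\Ste})$, where $\Mod_{\pr}(\cat{C})$ is the category of \emph{$\Kappa$-compact} $\cat{C}$-modules in $\Cat{REX}^{\Kappa}$, the category of large categories with small colimits; by \cref{xjl7pp} this ambient category is $\IND_{\Kappa}(\Cat{Pr})$, not a category of presentable categories. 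Your recursion $(n+1)\Cat{Pr}^{\mathrm{St}}=\Mod_{n\Cat{Pr}^{\mathrm{St}}}\widehat{\Cat{Pr}}$ rests on the claim that $\Cat{Pr}^{L}$ is a presentable object of the enlarged universe; this is false ($\Cat{Pr}$ does not admit $\Kappa$-indexed coproducts---this failure, cf.\ \cref{xwabf7} and the discussion preceding \cref{xjl7pp}, is precisely why Stefanich detours through $\Cat{REX}^{\Kappa}$), and in any case $n\Cat{Pr}^{\Ste}$ is not an object of $\Cat{Pr}^{L}$, so the expression $\Mod_{n\Cat{Pr}^{\mathrm{St}}}\widehat{\Cat{Pr}}$ is not even defined.

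This misreading is not cosmetic, because it dooms the key lemma you propose. Even if one charitably reinterprets your target as ``modules in $\Cat{REX}^{\Kappa}$ whose underlying category is presentable,'' that category differs from $n\Cat{Pr}^{\Ste}$ already at the level of objects: for $n=2$ the unit is the free module $\Cat{Pr}\otimes\Cat{Ani}\simeq\Cat{Pr}$ on the $\Kappa$-compact object $\Cat{Ani}\in\Cat{REX}^{\Kappa}$, hence is $\Kappa$-compact in $\Mod_{\Cat{Pr}}(\Cat{REX}^{\Kappa})$ by the argument of \cref{x6wlid}, yet its underlying category $\Cat{Pr}$ is not presentable. So objects of $n\Cat{Pr}^{\Ste}$ for $n\geq2$ are in general not accessible categories at all, and your proposed ``uniform accessibility bookkeeping''---bounding each object's action datum by a single small regular cardinal---addresses a statement that is false for the actual target; no refinement of it can produce the comparison with $\injlim_{\kappa}\Mod_{n\Cat{Pr}^{\kappa}}\Cat{Pr}^{\kappa}$. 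What is needed instead, and what the paper proves, is an identification of the $\Kappa$-compact objects of Ind-completed module categories: \cref{xucgor}, giving $\LMod_{\cat{C}}(\Cat{REX}^{\kappa})\simeq\IND_{\Kappa}(\LMod_{\cat{C}}(\Cat{Rex}^{\kappa}))$ for small $\kappa$, and \cref{xfxio0}, giving $\injlim_{\kappa<\Kappa}\LMod_{\cat{C}(\kappa)}(\Cat{Pr}^{\kappa})\simeq(\LMod_{\cat{C}}(\Cat{REX}^{\Kappa}))_{\Kappa}$, which in turn reduces to the equivalence $\injlim_{\kappa<\Kappa}\Cat{REX}^{\kappa}\to\Cat{REX}^{\Kappa}$ from \cref{xjl7pp}. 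Your closing observation---that independence of the enlargement follows because the source of the comparison mentions only the small universe---is correct and is exactly how the paper concludes, but only once the comparison is aimed at Stefanich's actual definition.
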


We then go back to the problem of limits.
This was widely open; Stefanich posed the following
in~\cite[Conjecture~5.5.1]{Stefanich}:

\begin{conjecture}[Stefanich]\label{xiizpr}
  The \(1\)-category~\(n\Cat{Pr}\)
  has (small) limits for~\(n\geq0\).
\end{conjecture}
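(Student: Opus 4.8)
The plan is to argue by induction on $n$, exploiting the recursive description $(n+1)\Cat{Pr}^{\kappa}=\Mod_{n\Cat{Pr}^{\kappa}}\Cat{Pr}^{\kappa}$ together with the principle that forgetful functors out of module categories create limits. The base cases are classical: $0\Cat{Pr}=\Cat{Ani}$ has all small limits, and $1\Cat{Pr}=\Cat{Pr}$ has small limits, computed by sending a small diagram of presentable categories to the associated diagram of right adjoints and forming its limit in $\Cat{Cat}$, the result being again presentable with colimit-preserving projections.

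For the inductive step I would first work at a fixed regular cardinal $\kappa$, deferring the colimit over $\kappa$. Assuming $n\Cat{Pr}^{\kappa}$ is an $\E_{\infty}$-algebra in the symmetric monoidal category $\Cat{Pr}^{\kappa}$, and granting that $\Cat{Pr}^{\kappa}$ itself has small limits (checked as in the case $n=1$, via the associated diagram of right adjoints), the forgetful functor
\[
  \Mod_{n\Cat{Pr}^{\kappa}}\Cat{Pr}^{\kappa}\longrightarrow\Cat{Pr}^{\kappa}
\]
is conservative and creates small limits. Thus $(n+1)\Cat{Pr}^{\kappa}$ has small limits for every $\kappa$, computed underlyingly in $\Cat{Pr}^{\kappa}$.

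It then remains to descend along the filtered colimit $n\Cat{Pr}=\injlim_{\kappa}n\Cat{Pr}^{\kappa}$, and this is where I expect the main obstacle to lie. The natural strategy is: given a small diagram $D\colon I\to n\Cat{Pr}$, note that its image involves only small-many objects and morphisms, each living at some stage $\kappa_i$; since $I$ is small the collection $\{\kappa_i\}$ is bounded, so $D$ factors through a single $n\Cat{Pr}^{\kappa}$, where one may form $\lim D$. To see that this is a limit in $n\Cat{Pr}$, one must check that the transition functors $n\Cat{Pr}^{\kappa}\to n\Cat{Pr}^{\lambda}$ preserve small limits — so that the stagewise limit survives enlargement of $\kappa$ — and that the resulting universal property holds against an arbitrary test object, which lives at some stage $\mu\geq\kappa$. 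The decisive difficulty is the first requirement: for $n\geq1$ these transition functors arise as extension-of-scalars (base change) along $n\Cat{Pr}^{\kappa}\to n\Cat{Pr}^{\lambda}$, hence a priori only colimit-preserving, so their compatibility with small — and in particular infinite — limits must be extracted from a careful analysis of how $\kappa$-compact objects behave under passage to right adjoints. Establishing limit-preservation for these base-change functors is therefore the crux on which the whole argument rests.
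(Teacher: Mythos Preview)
Your proposal attempts to prove a statement that the paper \emph{disproves}: \cref{xiizpr} is false for \(n = 2\) (this is \cref{false}, established in \cref{s:san}). So there is no proof to compare against.

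Your analysis is nonetheless structurally accurate, and you have correctly located the point of failure. The first two steps are fine: the base cases \(n=0,1\) hold, and each \((n+1)\Cat{Pr}^{\kappa}\) has small limits created by the forgetful functor to \(\Cat{Pr}^{\kappa}\). The descent step is indeed the crux, and the paper shows that for \(n=2\) the transition functors \(2\Cat{Pr}^{\kappa} \to 2\Cat{Pr}^{\lambda}\) genuinely fail to preserve pullbacks (\cref{xgfihx}). Concretely, the diagram \(\Fun([1],\Cat{Pr}_{\st}) \xrightarrow{\cofib} \Cat{Pr}_{\st} \gets 0\) has a limit \(\Cat{Dom}^{\kappa}\) in each \(2\Cat{Pr}^{\kappa}\), but no object of \(2\Cat{Pr}\) serves as the limit (\cref{x90m7h}). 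The obstruction is detected by an ordinal-valued complexity \(c_{\kappa}\) counting how many iterations of \(\kappa\)-small colimits are needed to generate the target of a dominant morphism from its image (\cref{xohpra}); objects arising from \(\Cat{Dom}^{\kappa}\) have complexity bounded near \(\kappa\), while \(\Cat{Dom}^{\lambda}\) contains morphisms of complexity \(\geq \lambda\) (\cref{x1814s}). It is worth noting that one categorical level down your strategy \emph{does} go through: for \(\cat{C}\in 2\Cat{Pr}^{\kappa}\) the base-change \(\cat{C} \to \Cat{Pr}^{\lambda} \otimes_{\Cat{Pr}^{\kappa}} \cat{C}\) preserves \(\kappa\)-small limits and is a monomorphism (\cref{two_fil}), and this is exactly how the paper shows that every presentable \(2\)-category has limits (\cref{xyzmgb}).
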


\begin{remark}\label{xufuyz}
  In Stefanich’s theory,
  the limit can be computed in \(\Mod_{(n-1)\Cat{Pr}}(\Cat{REX}^{\Kappa})\)
  (see \cref{ss:orig} for our notation),
  and the question
  is whether it belongs to the full subcategory \(n\Cat{Pr}\).
\end{remark}

\begin{remark}\label{xd12jz}
In \cref{xiizpr},
  we only consider \(1\)-categorical limits,
  since it is straightforward to deduce
  the existence of \((n+1)\)-categorical limits
  from this fact.
\end{remark}

\begin{example}\label{xgxs7p}
  When \(n=0\), \cref{xiizpr} is the fact that \(\Cat{Ani}\) has limits.
\end{example}

\begin{example}\label{x2mzo5}
  When \(n=1\), \cref{xiizpr} is~\cite[Proposition~5.5.3.13]{LurieHTT};
  we review this in \cref{ss:lim_pr}.
  In the classical setting, this is due to Bird~\cite{Bird84}.
\end{example}

Therefore,
the first open case was \(n=2\).
We prove the following:

\begin{Theorem}\label{false}
  \Cref{xiizpr} is false for \(n=2\).
\end{Theorem}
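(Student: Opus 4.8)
The plan is to produce a single small diagram in $2\Cat{Pr}$ whose limit, were it to exist, would be the $2$-categorical analogue of the pathological object in \cref{xqud1s}: a cocomplete $2$-category with presentable hom-categories that nevertheless fails to be compactly generated. By \cref{xufuyz} the only candidate for the limit is computed in the complete ambient category $\Mod_{1\Cat{Pr}}(\Cat{REX}^{\Kappa})$, so it suffices to exhibit a small diagram whose limit there is not $\kappa$-compactly generated for any regular $\kappa$, and then to rule out any other representing object.

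The conceptual source of the failure, and the reason it is invisible for $n\le 1$, is an exactness property of the enriching category. For $n=1$ (\cref{x2mzo5}) the hom-objects of a limit $\lim_i C_i$ live in $\Cat{Ani}$, and an object $(x_i)$ is $\lambda$-compact precisely when the functor $\lim_i\Map_{C_i}(x_i,\X)$ preserves $\lambda$-filtered colimits; since in $\Cat{Ani}$ a $\lambda$-small limit commutes with $\lambda$-filtered colimits, taking $\lambda$ larger than the (small) indexing shape shows the limit is again compactly generated. For $n=2$ the hom-objects live in $\Cat{Pr}$, and it is exactly this commutation of small limits with filtered colimits that can break. First I would isolate a small diagram shape $I$ and presentable $2$-categories $\mathcal{C}_i$ for which the $I$-indexed limit fails to commute with $\lambda$-filtered colimits in $\Cat{Pr}$ for every $\lambda$. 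The natural place to look is a countable cofiltered (inverse) limit whose transition maps are engineered, through the description of $\Cat{Pr}^{\kappa}$ by small categories with $\kappa$-small colimits, so that a compatible family is forced to involve compact objects of unbounded rank.

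Concretely, I expect to exploit the very structure of \cref{x2r4as}, namely that $2\Cat{Pr}=\injlim_{\kappa}2\Cat{Pr}^{\kappa}$ is a filtered colimit over the proper class of regular cardinals. Each $2\Cat{Pr}^{\kappa}$, being equivalent to a category of small enriched categories with $\kappa$-small colimits, does admit the limit in question; call its value $L_{\kappa}$. The heart of the argument is to show that the transition functors $2\Cat{Pr}^{\kappa}\to 2\Cat{Pr}^{\lambda}$ do not preserve this limit—each step strictly enlarges $L_{\kappa}$—so that the family $(L_{\kappa})_{\kappa}$ never stabilises. Consequently no single $L_{\kappa}$ can serve as the limit in $\injlim_{\kappa}2\Cat{Pr}^{\kappa}$: the would-be limit is morally $\injlim_{\kappa}L_{\kappa}$, a colimit indexed by a proper class, which is not an object of $2\Cat{Pr}$. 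This is the precise echo of \cref{xqud1s}, where the terminal object would be a proper-class colimit of representables; the contrast with $n=1$ is that there the analogous stagewise limits stabilise, which is what the exactness of $\Cat{Ani}$ buys.

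The main obstacle is twofold. The delicate half is establishing the non-stabilisation quantitatively: I must show not merely that the obvious generators of $L_{\kappa}$ acquire larger rank after transition, but that $\injlim_{\kappa}L_{\kappa}$ admits no small generating family of bounded-rank compact objects at all—the genuine failure of accessibility, exactly as one shows that the free small-colimit cocompletion of a proper class has no small dense generator. The second half is upgrading ``the ambient limit is not presentable'' to ``the limit does not exist in $2\Cat{Pr}$''. For this I would use that the inclusion $2\Cat{Pr}\hookrightarrow\Mod_{1\Cat{Pr}}(\Cat{REX}^{\Kappa})$ is fully faithful and that, by the adjoint functor theorem for presentable $2$-categories established above, any hypothetical limit in $2\Cat{Pr}$ would be detected by its mapping spaces out of presentable test objects. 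Choosing a suitable test object—such as the unit $\Cat{Pr}$—I would show that the required comparison $\Map(T,L')\simeq\lim_i\Map(T,\mathcal{C}_i)$ cannot hold for any presentable $L'$, since the right-hand side is again a proper-class colimit that no presentable $2$-category represents.
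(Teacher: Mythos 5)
Your skeleton does match the paper's strategy at a coarse level: exhibit a small diagram whose stagewise limits $L_{\kappa}$ exist in each $2\Cat{Pr}^{\kappa}$, show that the transition functors $2\Cat{Pr}^{\kappa}\to2\Cat{Pr}^{\lambda}$ fail to preserve them, and convert this non-stabilization into non-existence of the limit in $2\Cat{Pr}$. This is exactly \cref{xgfihx} followed by the proof of \cref{x90m7h}, where the diagram is the \emph{pullback} $\ker(\cofib)=\Fun([1],\Cat{Pr}_{\st})\times_{\Cat{Pr}_{\st}}0$ --- a finite diagram, not the countable cofiltered one you guess at. However, the two steps you defer are precisely where all the content lies, and your sketched replacements for them would fail.

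First, ``each step strictly enlarges $L_{\kappa}$'' is not by itself something you can propagate; you need a quantitative invariant that is \emph{stable} under enlargement of the cardinal, and constructing one is the bulk of the paper's work: the complexity $c_{\kappa}(F)$ of a dominant morphism, defined by transfinite closure under $\kappa$-small colimits (\cref{x7c5js,xohpra}), which grows by at most $\omega+2$ when $\kappa$ increases (\cref{x48xj0}) and yet is unbounded on the explicit morphisms $\Fun(\kappa^{\delta},\Cat{Sp})\to\Fun(\kappa^{\op},\Cat{Sp})$ of \cref{x1814s}, via a nilpotence analysis of ordinal-indexed diagrams of spectra (\cref{xxf9hh}). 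Second, and decisively, non-stabilization does not ``consequently'' preclude a limit: a hypothetical limit $\cat{L}\in2\Cat{Pr}$ need not be any $L_{\kappa}$, nor the ambient limit, nor any ``proper-class colimit'' --- it is merely some object presented at some cardinal, and the universal property only produces maps $L_{\kappa}\to\cat{L}(\lambda)\to L_{\lambda}$ interleaving its base changes with the stagewise limits. The paper rules this out by building two such interleaved sequences, passing to the successor cardinals $\kappa_{\infty}=(\sup_{n}\kappa_{n})^{+}$ and $\lambda_{\infty}=(\sup_{n}\lambda_{n})^{+}$, and showing that the resulting comparison functor would have to be essentially surjective, contradicting the complexity bound; the $+\omega+2$ stability is exactly what survives the interleaving and the suprema. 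Your substitute --- testing against $T=\Cat{Pr}$ and asserting that $\lim_{i}\Map(T,\cat{C}_{i})$ is ``a proper-class colimit that no presentable $2$-category represents'' --- does not work: that limit is a perfectly well-defined large category (computed in the complete ambient category, cf.\ \cref{xufuyz}), and showing that no presentable $L'$ corepresents $T\mapsto\lim_{i}\Map(T,\cat{C}_{i})$ \emph{is} the hard step, not a reformulation of it. (Also, \cref{adjoint} concerns right adjoints of functors \emph{between} presentable $2$-categories, i.e.\ of morphisms in $2\Cat{Pr}$; it plays no role in detecting limits in the $1$-category $2\Cat{Pr}$, which is just the Yoneda lemma.)
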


\begin{remark}\label{xgspx6}
  Note that
  \cref{false} implies that
  the adjoint functor theorem
  fails for presentable \(3\)-categories.
  Hence,
  the original notation~\(3\Cat{Pr}^{\textnormal{L}}\)
  may be misleading.
\end{remark}

Moreover,
we obtain something negative
about~\(\Hom\),
which answers the question in~\cite[Remark~1.1.3]{Stefanich}.
We first explain what \(\Hom\) is:

\begin{definition}\label{xvgpfp}
  Let \(C\) and~\(C'\)
  be objects of a presentable \(n\)-category~\(\cat{C}\)
  for \(n\geq1\).
  We say that \emph{\(\Hom_{\cat{C}}(C,C')\) exists presentably}
  if there is an object satisfying
  \begin{equation*}
    \Map_{(n-1)\Cat{Pr}}(\cat{E},\Hom_{\cat{C}}(C,C'))
    \simeq
    \Map_{\cat{C}}(\cat{E}\otimes C,C')
  \end{equation*}
  for any \(\cat{E}\in(n-1)\Cat{Pr}\).
\end{definition}

\begin{remark}\label{xc7puq}
  As in \cref{xufuyz},
  in Stefanich’s theory,
  \(\Hom\) in a presentable \(n\)-category
  for \(n\geq2\)
  always exists as an object
  of \(\Mod_{(n-2)\Cat{Pr}}(\Cat{REX}^{\Kappa})\).
  We are asking in \cref{xvgpfp}
  whether it belongs to the full subcategory \((n-1)\Cat{Pr}\).
\end{remark}

\begin{example}\label{xcv88j}
  In \cref{xvgpfp},
  when \(n=1\),
  \(\Hom\) is just \(\Map\).
\end{example}

\begin{example}\label{x0a017}
  In \cref{xvgpfp},
  when \(n=2\),
  \(\Hom\) always exists presentably.
  This follows, e.g., from \cref{adjoint} below.
\end{example}

\begin{Theorem}\label{omega}
  There is a stable presentably symmetric monoidal \(3\)- or \(4\)-category
  whose \(\End(\unit)\) does not exist presentably.
\end{Theorem}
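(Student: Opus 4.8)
The plan is to package the failure of limits from \cref{false} as a non-representability statement about the unit. Let $\cat{C}$ be a presentably symmetric monoidal $n$-category, i.e.\ a commutative algebra in $n\Cat{Pr}$, and let $u\colon(n-1)\Cat{Pr}\to\cat{C}$ be its unit morphism; being $(n-1)\Cat{Pr}$-linear, $u$ is given by $u(\cat{E})\simeq\cat{E}\otimes\unit$. Unwinding \cref{xvgpfp}, the object $\End(\unit)=\Hom_{\cat{C}}(\unit,\unit)$ exists presentably exactly when the functor $\cat{E}\mapsto\Map_{\cat{C}}(\cat{E}\otimes\unit,\unit)$ is representable by an object of $(n-1)\Cat{Pr}$, equivalently when $u$ admits a right adjoint at $\unit$. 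By \cref{xc7puq} this functor is always represented in the ambient $\Mod_{(n-2)\Cat{Pr}}(\Cat{REX}^{\Kappa})$, so the entire question is whether the representing object descends to $(n-1)\Cat{Pr}$. The first step is therefore to fix this reduction and to compute $u^{R}(\unit)$ in the big category as a limit.

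For the main case I take $n=3$, so that $(n-1)\Cat{Pr}=2\Cat{Pr}$, and I feed in the diagram $F\colon\cat{I}\to2\Cat{Pr}$ manufactured in the proof of \cref{false}, whose limit---formed in $\Mod_{1\Cat{Pr}}(\Cat{REX}^{\Kappa})$---does not lie in $2\Cat{Pr}$. The goal is to produce a stable presentably symmetric monoidal $3$-category $\cat{C}$ whose unit morphism $u\colon2\Cat{Pr}\to\cat{C}$ has $u^{R}(\unit)\simeq\lim_{\cat{I}}F$, so that $\End(\unit)$ coincides with this non-presentable limit. I would build $\cat{C}$ as the endomorphisms of the unit of a descent/totalization datum attached to $F$---concretely, as a category of modules (or of sections of the Grothendieck construction of $F$) whose $2\Cat{Pr}$-enriched endomorphism object of the unit is forced, by the tensor--hom adjunction, to be $\lim_{\cat{I}}F$. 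The decisive structural point is that $\cat{C}$ can be presented as a colimit-type object---a cocompletion or a module category freely generated over a compactly generated algebra---so that $\cat{C}$ genuinely lies in $3\Cat{Pr}$, even though the limit nature of $\End(\unit)$ drives that object out of $2\Cat{Pr}$. The same recipe, applied one categorical level higher, yields a $4$-categorical incarnation; this accounts for the ``$3$- or $4$-category'' in the statement, and one keeps whichever version is technically cleaner.

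Stability is obtained by running the construction over spectra: one takes $F$ valued in stable presentable $2$-categories---which requires checking that the counterexample of \cref{false} can be chosen stable, e.g.\ by tensoring it with $\Cat{Sp}$ and verifying the limit still escapes---after which $\cat{C}$ is automatically stable. The hard part, and the heart of the matter, is the simultaneous control of two opposing finiteness conditions in the middle step: $\cat{C}$ must be $\kappa$-compactly generated for some single~$\kappa$ (so $\cat{C}\in3\Cat{Pr}$), while $\End(\unit)$ must fail to be $\lambda$-compactly generated for every~$\lambda$ (so $\End(\unit)\notin2\Cat{Pr}$). Keeping $\cat{C}$ compactly generated is what the colimit/cocompletion presentation buys us, but one must verify that the enriched-hom computation transports the precise unbounded-cardinal mechanism underlying \cref{false}---rather than accidentally repairing it---and this forces the use of the specific diagram $F$ rather than a generic one.
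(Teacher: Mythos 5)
Your opening reduction is sound and matches the paper's starting point: computed in the ambient (Stefanich) category, \(\End_{\cat{C}}(\unit)\) is a limit, so the failure of limits in \cref{false} is the right engine. But the proposal has a genuine gap at its center: you never construct \(\cat{C}\). The phrases ``descent/totalization datum attached to \(F\)'', ``sections of the Grothendieck construction'', and ``module category freely generated over a compactly generated algebra'' are placeholders for exactly the step that needs proof, namely producing a \emph{presentable} symmetric monoidal higher category whose unit endomorphisms are forced to equal the non-presentable limit. The paper's proof has two ingredients you are missing. First, the cospan of \cref{x90m7h} lives in \(2\Cat{Pr}\), not in \(\CAlg(2\Cat{Pr})\), so before any unit exists one must upgrade it to commutative algebras; the paper does this with the free commutative algebra functor and must then re-prove that the limit still fails to exist, by inspecting the degree-one graded piece (\cref{xswc9r}). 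Your proposal needs the same upgrade (otherwise \(\lim F\) carries no algebra structure and cannot be an \(\End(\unit)\)), but does not address it. Second, and decisively, the paper does not build a \(3\)-category with the desired property directly: it passes to the cospan \(\Mod_{\Mod_{(\X)}(2\Cat{Pr})}(3\Cat{Pr})\), \emph{two} module levels up, precisely because at that level the arrows acquire left adjoints by ambidexterity, so that \cite[Proposition~5.5.10]{Stefanich} guarantees the limit \(\cat{C}\) exists in \(\CAlg(4\Cat{Pr})\) (\cref{xq124c}). That adjoint-existence mechanism is the sole reason a presentable \(\cat{C}\) exists at all; nothing in your ``colimit-type presentation'' assertion substitutes for it.

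This also shows your reading of the disjunction ``\(3\)- or \(4\)-category'' is wrong: it is not a choice between two parallel constructions of which one keeps the cleaner, but an unavoidable dichotomy. Applying ambient \(\End(\unit)\) to the paper's \(\cat{C}\) yields the limit of the \(\Mod_{(\X)}(2\Cat{Pr})\)'s; either that object is not presentable, and the \(4\)-category \(\cat{C}\) is the example, or it is presentable, and then it is itself a presentably symmetric monoidal \(3\)-category whose own \(\End(\unit)\) is the non-presentable limit of \cref{xswc9r}. The proof cannot determine which case occurs. Your plan asserts the strictly stronger, purely \(3\)-categorical statement---a \(\cat{C}\in\CAlg(3\Cat{Pr})\) with \(\End_{\cat{C}}(\unit)\simeq\lim_{\cat{I}}F\)---and justifies the presentability of \(\cat{C}\) only by the unargued claim that it admits a colimit-style presentation. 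That is exactly the kind of claim \cref{false} warns against taking for granted, and it is unclear it can be realized at level \(3\) at all; until that existence step is supplied, the proposal does not prove the theorem.
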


\begin{remark}\label{xayu6y}
  \Cref{omega} is not merely a pathology
  of certain presentable \(n\)-categories.
  This phenomenon of iteratively taking \(\Hom\)
  producing something nonpresentable
  is already reflected in~\(3\Cat{Pr}\) or~\(4\Cat{Pr}\)
  since we have \(\cat{C}\simeq\Hom_{n\Cat{Pr}}(\unit,\cat{C})\)
  for any \(\cat{C}\in n\Cat{Pr}\).
\end{remark}

Nevertheless,
we can still prove something useful
below that level:

\begin{Theorem}\label{adjoint}
  The underlying functor
  of any morphism between
  presentable \(2\)-categories
  has a right adjoint.
  In particular,
  any presentable \(2\)-category has limits.
\end{Theorem}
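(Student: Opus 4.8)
The plan is to reduce to a fixed regular cardinal and then run an enriched adjoint functor theorem over the base $\cat V:=\Cat{Pr}^{\kappa}$, exploiting the fact that, although $\Cat{Pr}$ itself is not presentable, each $\cat V$ is a presentable, closed symmetric monoidal $1$-category. Since $n\Cat{Pr}=\injlim_{\kappa}n\Cat{Pr}^{\kappa}$ is a filtered colimit along functors compatible with the formation of underlying $2$-categories, any morphism $F\colon\cat C\to\cat D$ of presentable $2$-categories is, together with its underlying functor, the image of a morphism in some $2\Cat{Pr}^{\kappa}=\Mod_{\cat V}\cat V$. Unwinding \cref{x2r4as}, such a $\cat C$ is a $\kappa$-compactly generated $\cat V$-module, equivalently a $\cat V$-enriched, tensored, presentable category, and $F$ is a $\cat V$-linear functor preserving colimits and $\kappa$-compact objects. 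I want to produce a right adjoint $G$ of the underlying $2$-functor; note that $G$ need not itself be a morphism in $2\Cat{Pr}$, exactly as in the $1$-categorical adjoint functor theorem the right adjoint need not preserve colimits, so it suffices to work at the fixed level $\kappa$.

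First I would record the $\cat V$-enrichment explicitly. Because $\cat V\simeq\Cat{Cat}^{\rex(\kappa)}$ is a presentable $1$-category and the tensoring $\X\otimes\X\colon\cat V\times\cat C\to\cat C$ preserves colimits separately in each variable, for fixed objects $C,C'\in\cat C$ the functor $\cat V\to\Cat{Ani}$, $X\mapsto\Map_{\cat C}(X\otimes C,C')$, sends colimits to limits and is therefore representable by the ordinary adjoint functor theorem for the presentable $1$-category $\cat V$ (\cite{LurieHTT}). This yields $\Hom_{\cat C}(C,C')\in\cat V\subseteq\Cat{Pr}$ with $\Map_{\cat V}(X,\Hom_{\cat C}(C,C'))\simeq\Map_{\cat C}(X\otimes C,C')$; in particular this already establishes the case $n=2$ of \cref{x0a017} without circularity, and it equips the underlying $2$-category of $\cat C$ with mapping categories in $\Cat{Pr}$.

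The main step is to show that, for each object $D\in\cat D$, the $\cat V$-enriched presheaf $\Phi_D=\Hom_{\cat D}(F\X,D)\colon\cat C^{\op}\to\cat V$ is representable. Since $F$ preserves colimits and tensors, $\Phi_D$ is $\cat V$-continuous — it sends colimits to limits and tensors to cotensors — and since $F$ preserves $\kappa$-compact objects, $\Phi_D$ is the right Kan extension of its restriction to the small $\cat V$-category $\cat C^{\kappa}$ of $\kappa$-compact objects. Writing $\cat C\simeq\Ind_{\kappa}^{\cat V}(\cat C^{\kappa})$ for the enriched $\kappa$-Ind-completion, I would use the enriched co-Yoneda/density formula to exhibit $\Phi_D$ as a $\kappa$-filtered weighted colimit of representables and take $GD\in\cat C$ to be the colimit of the corresponding representing objects; the enriched Yoneda lemma then identifies $\Phi_D\simeq\Hom_{\cat C}(\X,GD)$. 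Naturality in $D$ assembles the $GD$ into a $2$-functor $G$ with a natural equivalence $\Hom_{\cat C}(C,GD)\simeq\Hom_{\cat D}(FC,D)$ of mapping categories; mapping out of the unit recovers the underlying mapping-space adjunction, so $G$ is a right adjoint of the underlying functor.

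The hard part will be this representability step, which is precisely an enriched adjoint functor theorem: everything hinges on the base $\cat V=\Cat{Pr}^{\kappa}$ being presentable and closed as a $1$-category, so that the enriched Ind-completion, density, and the interchange of the relevant limits and colimits in $\cat V$ behave as in the unenriched presentable setting. This is exactly the feature that disappears one level higher, where the underlying $1$-category of $2\Cat{Pr}^{\kappa}$ is no longer presentable in the required way, and it is what makes \cref{false} possible. Finally, to deduce that $\cat C$ has small limits, I would apply the theorem to the diagonal $\cat C\to\Fun(K,\cat C)$ for a small $K$ (enlarging $\kappa$ if necessary): colimits in functor categories are computed pointwise, so this is a $\cat V$-linear functor preserving colimits and $\kappa$-compact objects, hence a morphism of presentable $2$-categories, and the right adjoint supplied above is by construction $\lim_{K}$. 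As noted in \cref{xd12jz}, the $1$-categorical limits obtained this way suffice.
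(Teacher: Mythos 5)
There is a genuine gap, and it sits exactly where you dismiss it: in the claim that ``it suffices to work at the fixed level $\kappa$.'' The transition functors $2\Cat{Pr}^{\kappa}\to2\Cat{Pr}^{\lambda}$ are the base-change functors $\Cat{Pr}^{\lambda}\otimes_{\Cat{Pr}^{\kappa}}(\X)$, and these are \emph{not} compatible with the formation of underlying $2$-categories: already for the unit object, base change replaces $\Cat{Pr}^{\kappa}$ by $\Cat{Pr}^{\lambda}$. Consequently, the underlying $2$-category of a presentable $2$-category $\cat{C}$ presented by $\cat{C}(\kappa)\in2\Cat{Pr}^{\kappa}$ is the filtered colimit $\injlim_{\lambda\geq\kappa}\cat{C}(\lambda)$ with $\cat{C}(\lambda)=\Cat{Pr}^{\lambda}\otimes_{\Cat{Pr}^{\kappa}}\cat{C}(\kappa)$, and the underlying functor of $F$ is $\injlim_{\lambda}F(\lambda)$, not the underlying functor of $F(\kappa)$. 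Your enriched adjoint functor theorem over $\cat{V}=\Cat{Pr}^{\kappa}$ (which is fine, and indeed at a fixed level the right adjoint already exists by the ordinary adjoint functor theorem, since the underlying category of $\cat{C}(\lambda)$ is presentable and $F(\lambda)$ preserves colimits) only produces a right adjoint $G(\lambda)$ of each $F(\lambda)$ separately. To obtain a right adjoint of $\injlim_{\lambda}F(\lambda)$ you must in addition show that, for $\kappa\leq\lambda\leq\mu$, the square formed by $F(\lambda)$, $F(\mu)$ and the transition functors is right adjointable, so that the $G(\lambda)$ assemble into a functor on the colimit. Your proposal never addresses this.

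That compatibility is the actual content of the paper's proof, and it cannot be waved away. The paper establishes it by identifying $\cat{C}(\lambda)\simeq\Fun^{\Cat{Pr}^{\kappa}}_{\lex{\kappa}}((\cat{C}(\kappa)_{\kappa})^{\op},\Cat{Pr}^{\lambda})$ (\cref{x8ypbm}, essentially the enriched $\Ind$-description you yourself invoke), under which the right adjoints become precomposition with $\cat{C}(\kappa)_{\kappa}\to\cat{D}(\kappa)_{\kappa}$ and the transition functors become postcomposition with $\Cat{Pr}^{\lambda}\to\Cat{Pr}^{\mu}$; these visibly commute, which is precisely the required adjointability. That such a step is not automatic is demonstrated by the paper itself one level up: each $2\Cat{Pr}^{\kappa}$ has pullbacks, yet \cref{xgfihx} shows the transition functors $2\Cat{Pr}^{\kappa}\to2\Cat{Pr}^{\lambda}$ fail to preserve them, which is why \cref{false} holds. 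So level-wise existence of adjoints or limits, without cross-level compatibility, proves nothing about the colimit --- if it did, the same reasoning would ``prove'' \cref{xiizpr} for $n=2$, contradicting \cref{false}. (Your deduction of limits from the adjoint statement via the diagonal $\cat{C}\to\Fun(K,\cat{C})$ is fine in outline, but it inherits the same gap.)
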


We conclude this introduction with a few remarks:

\begin{remark}\label{xyvdh1}
  One way to avoid this problem
  is simply to fix~\(\kappa\) (e.g., \(\kappa=\aleph_{1}\))
  and work with \(\kappa\)-compactly generated \(n\)-categories.
  With this,
  the pathology as in \cref{omega}
  does not arise,
  and we can still consider \(\kappa\)-compactly generated categorical spectra;
  see \cref{prsp}.
\end{remark}

\begin{remark}\label{x9h82c}
  The negative results in this paper
  demonstrate that there are fewer adjoints
  in higher category theory.
  In~\cite{n-rig-1},
  we introduce the notion of higher rigidity,
  which ensures the existence of many adjoints.
  In practice,
  many presentable \(n\)-categories appearing in nature
  have certain rigidity.
\end{remark}

\subsection*{Organization}\label{ss:outline}

In \cref{s:pr},
we explain the new definition
and prove \cref{comparison}.
In \cref{s:ni},
we prove some positive results
including \cref{adjoint}.
In \cref{s:san},
we construct counterexamples
showing \cref{false,omega}.

\subsection*{Acknowledgments}\label{ss:ack}

I thank Tim Campion, Adam Dauser, Peter Scholze, and Germán Stefanich
for helpful discussions related to this work.
I thank David Reutter for inviting me to present these results
at the University of Hamburg in December 2024.
I thank Peter Scholze for useful comments on a draft of this paper.
I thank the Max Planck Institute for Mathematics.

\subsection*{Conventions}\label{ss:conv}

Regular cardinals are always assumed to be infinite.
For a category~\(\cat{C}\)
and a regular cardinal~\(\kappa\),
we write \(\cat{C}_{\kappa}\) for
its full subcategory spanned by \(\kappa\)-compact objects.

\section{Presentable \texorpdfstring{\(n\)}{n}-categories}\label{s:pr}

We explain the new definition
in \cref{ss:new}.
We recall Stefanich’s original definition
and compare it with ours
in \cref{ss:orig}.

\subsection{Our definition}\label{ss:new}

We first observe that \cref{x2r4as}
makes sense.
We must inductively show that
\(n\Cat{Pr}^{\kappa}\) is in \(\CAlg(\Cat{Pr}^{\kappa})\).
We first note the following:

\begin{lemma}\label{x6wlid}
  Let \(\kappa\) be a regular cardinal
  and \(\cat{C}\) an object of \(\Alg(\Cat{Pr}^{\kappa})\).
  Then for any \(A\in\Alg(\cat{C})\),
  the category \(\LMod_{A}(\cat{C})\)
  is \(\kappa\)-compactly generated.
\end{lemma}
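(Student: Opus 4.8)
The plan is to produce an explicit set of $\kappa$-compact generators for $\LMod_{A}(\cat{C})$, namely the free modules on the $\kappa$-compact objects of $\cat{C}$, and then invoke a standard recognition criterion. First I would record the structural facts about modules in a presentable monoidal category. Since $\cat{C}$ lies in $\Alg(\Cat{Pr}^{\kappa})$, its tensor product preserves colimits separately in each variable, so by the usual theory of modules (Lurie's \emph{Higher Algebra}) the category $\LMod_{A}(\cat{C})$ is presentable, the forgetful functor $U\colon\LMod_{A}(\cat{C})\to\cat{C}$ is conservative and preserves all small colimits, and $U$ admits a left adjoint, the free-module functor $F=A\otimes(\X)$.

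The key step, which I expect to require the most care, is that $F$ carries $\kappa$-compact objects of $\cat{C}$ to $\kappa$-compact objects of $\LMod_{A}(\cat{C})$, \emph{even though $A$ itself need not be $\kappa$-compact}. This is precisely where the cocontinuity of $U$ enters, rather than any compactness of $A$: for $X\in\cat{C}_{\kappa}$ the adjunction gives a natural equivalence $\Map_{\LMod_{A}(\cat{C})}(F(X),\X)\simeq\Map_{\cat{C}}(X,U(\X))$, and since $U$ preserves $\kappa$-filtered colimits and $X$ is $\kappa$-compact in $\cat{C}$, the functor on the right preserves $\kappa$-filtered colimits. Hence $F(X)$ is $\kappa$-compact. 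The naive attempt to deduce this from compactness of the underlying object $A\otimes X$ in $\cat{C}$ would fail, and the whole point is that $\kappa$-compactness of a free module is governed by $U$ and not by its underlying object.

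Next I would show that the objects $F(X)$, as $X$ ranges over a small skeleton of $\cat{C}_{\kappa}$, generate $\LMod_{A}(\cat{C})$ under colimits. For this I would use the two-sided bar resolution, which exhibits any module $M$ as the geometric realization of a simplicial object whose $n$-th term is the free module $F(A^{\otimes n}\otimes U(M))$; thus $M$ is a colimit of free modules. Since $\cat{C}$ is $\kappa$-compactly generated, each underlying object $A^{\otimes n}\otimes U(M)$ is a colimit of objects of $\cat{C}_{\kappa}$, and $F$ preserves colimits, so $M$ is built under colimits out of the $F(X)$ with $X\in\cat{C}_{\kappa}$.

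Finally I would assemble these facts with the standard recognition criterion: a presentable category equipped with a small set of $\kappa$-compact objects that generate it under colimits is $\kappa$-compactly generated, since the closure of the generators under $\kappa$-small colimits is a small category $\cat{E}$ of $\kappa$-compact objects with $\LMod_{A}(\cat{C})\simeq\Ind_{\kappa}(\cat{E})$ (see \cite[Section~5.5]{LurieHTT}). Applying this to the generating set $\{F(X):X\in\cat{C}_{\kappa}\}$, which is $\kappa$-compact by the second step and generating by the third, shows that $\LMod_{A}(\cat{C})$ is $\kappa$-compactly generated, as desired.
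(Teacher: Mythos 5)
Your proposal is correct and follows essentially the same route as the paper's own (much terser) proof: both observe that the free-module functor \(F\) preserves \(\kappa\)-compact objects because the forgetful functor preserves colimits, and both obtain generation from the bar construction. Your version simply spells out the adjunction argument for compactness and the final recognition step \(\LMod_{A}(\cat{C})\simeq\Ind_{\kappa}(\cat{E})\), which the paper leaves implicit.
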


\begin{proof}
  We consider the forgetful functor \(G\colon\LMod_{A}(\cat{C})\to\cat{C}\)
  and its left adjoint~\(F\).
  Then
  \(F\) preserves \(\kappa\)-compact objects,
  since \(G\) preserves colimits
  by~\cite[Corollary~4.2.3.7]{LurieHA}.
  We can see that the image of~\(F\) generates
  \(\LMod_{A}(\cat{C})\)
  by considering the bar construction.
  Therefore, the claim follows.
\end{proof}

\begin{lemma}\label{xm1vro}
  Let \(\kappa\) be a regular cardinal
  and \(\cat{C}\) an object of \(\CAlg(\Cat{Pr}^{\kappa})\).
  Then for any \(A\in\CAlg(\cat{C})\),
  the category \(\Mod_{A}(\cat{C})\)
  is in \(\CAlg(\Cat{Pr}^{\kappa})\).
\end{lemma}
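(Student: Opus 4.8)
The plan is to begin from the standard presentable symmetric monoidal structure on \(\Mod_{A}(\cat{C})\) and then refine it along the inclusion \(\Cat{Pr}^{\kappa}\hookrightarrow\Cat{Pr}\). Forgetting the \(\kappa\)-compact generation, \(\Mod_{A}(\cat{C})\) carries the relative tensor product \(\otimes_{A}\) with unit \(A\), which exhibits it as an object of \(\CAlg(\Cat{Pr})\) by~\cite[Theorem~4.5.2.1]{LurieHA}. Since \(A\) is commutative, \(\Mod_{A}(\cat{C})\) agrees as a category with \(\LMod_{A}(\cat{C})\), so applying \cref{x6wlid} to the image of \(\cat{C}\) under \(\CAlg(\Cat{Pr}^{\kappa})\to\Alg(\Cat{Pr}^{\kappa})\) shows that its underlying category already lies in \(\Cat{Pr}^{\kappa}\). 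I would then use that \(\Cat{Pr}^{\kappa}\hookrightarrow\Cat{Pr}\) is a (non-full) symmetric monoidal subcategory---the \(\Cat{Pr}\)-tensor of \(\kappa\)-compactly generated categories is again \(\kappa\)-compactly generated, with \(\kappa\)-compact objects generated by tensors of \(\kappa\)-compact ones---so that it suffices to check that every structure functor \(\Mod_{A}(\cat{C})^{\otimes m}\to\Mod_{A}(\cat{C})\) of the commutative algebra already preserves \(\kappa\)-compact objects.

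The key computation uses the free--forgetful adjunction, with forgetful functor \(G\colon\Mod_{A}(\cat{C})\to\cat{C}\) and left adjoint \(F\). As in the proof of \cref{x6wlid}, \(G\) preserves colimits, so \(F\) preserves \(\kappa\)-compact objects, and the objects \(F(X)\) with \(X\in\cat{C}_{\kappa}\) generate \(\Mod_{A}(\cat{C})_{\kappa}\) under \(\kappa\)-small colimits and retracts. First, the unit \(A\simeq F(\unit_{\cat{C}})\) is \(\kappa\)-compact, because \(\unit_{\cat{C}}\in\cat{C}_{\kappa}\) by the hypothesis \(\cat{C}\in\CAlg(\Cat{Pr}^{\kappa})\); this handles the nullary structure map. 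Next, \(F\) is symmetric monoidal, so \(F(X)\otimes_{A}F(Y)\simeq F(X\otimes_{\cat{C}}Y)\), and \(X\otimes_{\cat{C}}Y\in\cat{C}_{\kappa}\) whenever \(X,Y\in\cat{C}_{\kappa}\), again because \(\cat{C}\in\CAlg(\Cat{Pr}^{\kappa})\). Thus \(\otimes_{A}\) carries a pair of generating \(\kappa\)-compact objects to a \(\kappa\)-compact object, and since it preserves colimits separately in each variable, it preserves \(\kappa\)-compact objects; the same argument applies to the higher \(m\)-ary multiplications.

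It then follows that \(\Mod_{A}(\cat{C})_{\kappa}\) is closed under \(\otimes_{A}\) and contains the unit, so the whole commutative algebra structure restricts to \(\Cat{Pr}^{\kappa}\); equivalently, writing \(\Mod_{A}(\cat{C})\simeq\Ind_{\kappa}(\Mod_{A}(\cat{C})_{\kappa})\) exhibits it as \(\Ind_{\kappa}\) of a small idempotent-complete \(\kappa\)-cocomplete symmetric monoidal category, which is precisely the data of an object of \(\CAlg(\Cat{Pr}^{\kappa})\). The hard part will be the bookkeeping in this last step: one must verify carefully that \(\Cat{Pr}^{\kappa}\hookrightarrow\Cat{Pr}\) is genuinely symmetric monoidal and that pointwise preservation of \(\kappa\)-compact objects by each structure functor promotes the \(\CAlg(\Cat{Pr})\)-structure to a \(\CAlg(\Cat{Pr}^{\kappa})\)-structure coherently, and not merely as an unrelated collection of compatible morphisms.
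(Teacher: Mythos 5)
Your proof is correct and takes essentially the same route as the paper's, whose entire argument is ``this follows from the proof of \cref{x6wlid}, noting that \(F\) is symmetric monoidal'': you spell out exactly that---\(\kappa\)-compact generation of \(\Mod_{A}(\cat{C})\) via the free--forgetful adjunction, then symmetric monoidality of \(F\) to see that the unit \(A\simeq F(\unit_{\cat{C}})\) and the tensors \(F(X)\otimes_{A}F(Y)\simeq F(X\otimes_{\cat{C}}Y)\) of generators are \(\kappa\)-compact, whence \(\otimes_{A}\) preserves \(\kappa\)-compactness. The coherence step you flag at the end is standard and unproblematic: \(\Cat{Pr}^{\kappa}\subset\Cat{Pr}\) is a symmetric monoidal subcategory, so lifting a commutative algebra structure along it is a property, verified on the unit and the binary multiplication (or, as you note, via the symmetric monoidal equivalence with \(\Ind_{\kappa}\) of small \(\kappa\)-cocomplete idempotent-complete categories that the paper itself uses in \cref{xuizju}).
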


\begin{proof}
  This follows from the proof of \cref{x6wlid},
  noting that \(F\) is symmetric monoidal.
\end{proof}

Given this,
we are reduced to showing the following statement:

\begin{proposition}\label{xuizju}
  For a regular cardinal~\(\kappa\),
  the category
  \(\Cat{Pr}^{\kappa}\)
  is an object of \(\CAlg(\Cat{Pr}^{\kappa})\).
\end{proposition}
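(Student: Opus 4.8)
The plan is to identify $\Cat{Pr}^{\kappa}$ with a category of small categories via Gabriel–Ulmer duality, establish its $\kappa$-compact generation there, and then transport the symmetric monoidal structure from $\Cat{Pr}$. Recall the equivalence $\Cat{Pr}^{\kappa}\simeq\Cat{Cat}^{\mathrm{rex}(\kappa)}$ given by $\cat{C}\mapsto\cat{C}_{\kappa}$, where $\Cat{Cat}^{\mathrm{rex}(\kappa)}$ is the category of (essentially small) idempotent-complete categories admitting $\kappa$-small colimits, with functors preserving $\kappa$-small colimits, and with inverse $\Ind_{\kappa}$ (\cite{LurieHTT}). A colimit-preserving functor that preserves $\kappa$-compact objects restricts to a $\kappa$-small-colimit-preserving functor on $\kappa$-compact objects, and conversely; so this matches the morphisms defining $\Cat{Pr}^{\kappa}$, and I would work on the right-hand side throughout.

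First, for the $\kappa$-compact generation, so that $\Cat{Pr}^{\kappa}$ is itself an object of $\Cat{Pr}^{\kappa}$, I would exhibit $\Cat{Cat}^{\mathrm{rex}(\kappa)}$ as the category of algebras over the monad $T$ of free idempotent-complete $\kappa$-small cocompletion on $\Cat{Cat}$. The category $\Cat{Cat}$ is compactly generated, the monad $T$ is $\kappa$-accessible (free $\kappa$-small cocompletion is a $\kappa$-ary construction), and the forgetful functor creates $\kappa$-filtered colimits; by the theory of algebras over an accessible monad this forces $\Cat{Cat}^{\mathrm{rex}(\kappa)}$ to be $\kappa$-compactly generated. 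Its $\kappa$-compact objects are the retracts of the free completions $T(\cat{A})$ of $\kappa$-small categories $\cat{A}$.

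For the algebra structure, I would take the symmetric monoidal structure on $\Cat{Pr}^{\kappa}$ to be the restriction of the Lurie tensor product on $\Cat{Pr}$: the subcategory $\Cat{Pr}^{\kappa}\subseteq\Cat{Pr}$ contains the unit $\Cat{Ani}$ (which is $\omega$-, hence $\kappa$-, compactly generated) and is closed under $\otimes$, since $\cat{C}\otimes\cat{D}\simeq\Ind_{\kappa}(\cat{C}_{\kappa}\otimes^{\mathrm{rex}(\kappa)}\cat{D}_{\kappa})$ is again $\kappa$-compactly generated and $F\otimes G$ preserves $\kappa$-compact objects whenever $F,G$ do (the $\kappa$-compact objects of $\cat{C}\otimes\cat{D}$ are built under $\kappa$-small colimits and retracts from the pure tensors $c\otimes d$ with $c\in\cat{C}_{\kappa}$, $d\in\cat{D}_{\kappa}$). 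All associativity, unit, and symmetry coherences are then inherited from the known fact $\Cat{Pr}\in\CAlg(\Cat{Pr})$ (\cite{LurieHA}). To upgrade this to a commutative algebra in $\Cat{Pr}^{\kappa}$, it remains to check that the multiplication $\otimes\colon\Cat{Pr}^{\kappa}\times\Cat{Pr}^{\kappa}\to\Cat{Pr}^{\kappa}$ is a morphism of $\Cat{Pr}^{\kappa}$ in each variable, i.e.\ preserves colimits and $\kappa$-compact objects separately. Colimit-preservation in each variable is inherited from $\Cat{Pr}$, with $\X\otimes\cat{D}$ admitting as right adjoint the internal hom of colimit- and $\kappa$-compact-preserving functors, which again lies in $\Cat{Pr}^{\kappa}$.

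The main obstacle I expect is the last point, that $\otimes$ preserves $\kappa$-compact objects in each variable, equivalently that $\cat{C}\otimes\cat{D}$ is $\kappa$-compact in $\Cat{Pr}^{\kappa}$ whenever both $\cat{C}$ and $\cat{D}$ are. Here I would combine the description of $\kappa$-compact objects as retracts of free completions with the monoidality of free cocompletion, $T(\cat{A})\otimes^{\mathrm{rex}(\kappa)}T(\cat{B})\simeq T(\cat{A}\times\cat{B})$, reducing to the fact that a product of two $\kappa$-small categories is $\kappa$-small. Granting this, the \emph{bilinear} functor factors through $\Cat{Pr}^{\kappa}\otimes_{\Cat{Pr}^{\kappa}}\Cat{Pr}^{\kappa}$, and the resulting multiplication is a genuine morphism of $\Cat{Pr}^{\kappa}$, completing the verification that $\Cat{Pr}^{\kappa}\in\CAlg(\Cat{Pr}^{\kappa})$.
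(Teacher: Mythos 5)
Your overall architecture coincides with the paper's: both reduce along the Gabriel--Ulmer equivalence \(\Ind_{\kappa}\colon\Cat{Rex}^{\kappa}_{\ic}\simeq\Cat{Pr}^{\kappa}\), both take as generators the free objects on compact objects of \(\Cat{Cat}\), and your identity \(T(\cat{A})\otimes^{\mathrm{rex}(\kappa)}T(\cat{B})\simeq T(\cat{A}\times\cat{B})\) is exactly the paper's closing observation that the free functor \(F\) is symmetric monoidal. The gap is that the step you dispose of in a parenthesis---``the monad \(T\) is \(\kappa\)-accessible (free \(\kappa\)-small cocompletion is a \(\kappa\)-ary construction)'' and ``the forgetful functor creates \(\kappa\)-filtered colimits''---is precisely where all of the technical content of the statement lives. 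That the forgetful functor \(\Cat{Rex}^{\kappa}\to\Cat{Cat}\) preserves \(\kappa\)-filtered colimits is \cref{xgvgzm} of the paper, and its proof is not formal: filtered colimits in \(\Cat{Cat}\) are not computed ``levelwise,'' and one needs explicit compact generators of \(\Cat{Cat}\) together with compactness of the relevant diagram shapes (\cref{xualzb}) and a \(2\)-categorical adjointability lemma (\cref{xh9qmw}) to see that the \(\Cat{Cat}\)-colimit of a \(\kappa\)-filtered diagram of \(\kappa\)-cocomplete categories again admits \(\kappa\)-small colimits preserved by the cocone functors. Your framing also has the logical order backwards: exhibiting \(\Cat{Cat}^{\mathrm{rex}(\kappa)}\) as algebras over \(T\) is itself a Barr--Beck verification, which the paper \emph{deduces from} this colimit analysis (\cref{monadic}); so monadicity cannot serve as the source of the accessibility statement---you would be assuming what is to be proved.

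The heuristic is least trustworthy exactly at \(\kappa=\aleph_{0}\), where your \(T\) includes idempotent completion. For \(\infty\)-categories a coherent idempotent is not finite data: it is a functor out of the walking idempotent \(\mathrm{Idem}\), which is not a compact object of \(\Cat{Cat}\), so ``\(\aleph_{0}\)-ary construction'' is simply not accurate. The accessibility claim is still true at \(\aleph_{0}\), but it requires the nontrivial additional input that filtered colimits of idempotent-complete \(\infty\)-categories remain idempotent complete, which is why the paper supplements \cref{xgvgzm} with \cite[Corollary~4.4.5.21]{LurieHTT} in precisely this case; your argument has no counterpart of this step. A smaller inaccuracy: the \(\kappa\)-compact objects of \(\Cat{Cat}^{\mathrm{rex}(\kappa)}\) are not (at least not obviously) the retracts of free completions \(T(\cat{A})\), but the retracts of \(\kappa\)-small colimits of such---compact algebras over a monad are in general not retracts of free ones. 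Your tensor-compatibility argument survives this correction, since \(\otimes\) preserves colimits in each variable and \(\kappa\)-small colimits of \(\kappa\)-compact objects are \(\kappa\)-compact, but the characterization as stated is unjustified.
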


\begin{proof}
  Let \(\Cat{Rex}^{\kappa}\)
  be the category of
  categories with \(\kappa\)-small colimits
  and functors that preserve them
  and \(\Cat{Rex}^{\kappa}_{\ic}\)
  its full subcategory
  spanned by idempotent complete categories.
  When \(\kappa>\aleph_{0}\),
  this inclusion is an equivalence.
  We recall that
  \({\Ind_{\kappa}}\colon\Cat{Rex}^{\kappa}_{\ic}\to\Cat{Pr}^{\kappa}\)
  induces an equivalence;
  see~\cite[Propositions~5.5.7.8 and~5.5.7.10]{LurieHTT}.
  Hence, it suffices to show that
  \(\Cat{Rex}^{\kappa}_{\ic}\)
  is an object of \(\CAlg(\Cat{Pr}^{\kappa})\).

  It is presentable by~\cite[Lemma~4.8.4.2]{LurieHA}.
  We wish to prove that it is \(\kappa\)-compactly generated.
  We consider the forgetful functor
  \(G\colon\Cat{Rex}^{\kappa}_{\ic}\to\Cat{Cat}\).
  Its left adjoint~\(F\) generates \(\Cat{Rex}^{\kappa}_{\ic}\).
  When \(\kappa>\aleph_{0}\),
  the desired result directly follows
  from \cref{xgvgzm} below.
  When \(\kappa=\aleph_{0}\),
  the desired result follows
  from combining it with~\cite[Corollary~4.4.5.21]{LurieHTT}.
  We then have to check that the tensor product operations
  preserve \(\kappa\)-compactness.
  This follows from the fact that \(F\) is symmetric monoidal.
\end{proof}

\begin{lemma}\label{xualzb}
  The category of categories~\(\Cat{Cat}\),
  equipped with the cartesian symmetric monoidal structure,
  is an object of \(\CAlg(\Cat{Pr}^{\aleph_{0}})\).
  Moreover, the following hold:
  \begin{enumerate}
    \item\label{i:sm_po}
      The walking span~\(\Lambda_{0}^{2}\) is compact.
    \item\label{i:sm_dis}
      Any \(\kappa\)-small set is \(\kappa\)-compact
      for a regular cardinal~\(\kappa\).
  \end{enumerate}
\end{lemma}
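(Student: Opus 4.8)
The plan is to establish the main statement via the general mechanism and then read off the two supplementary claims. I would first recall that \(\Cat{Pr}^{\aleph_{0}}\) is a symmetric monoidal subcategory of the category \(\Cat{Pr}^{\textnormal{L}}\) of presentable categories and colimit-preserving functors: the tensor product of two compactly generated categories is again compactly generated, and a colimit-preserving functor is a morphism of \(\Cat{Pr}^{\aleph_{0}}\) precisely when it preserves compact objects. Consequently, to exhibit \(\Cat{Cat}\) with its cartesian structure as an object of \(\CAlg(\Cat{Pr}^{\aleph_{0}})\) it suffices to check three things: that \(\Cat{Cat}\) is compactly generated, that the cartesian product makes it an object of \(\CAlg(\Cat{Pr}^{\textnormal{L}})\), and that the unit and the multiplication of this algebra structure preserve compact objects.

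The first two points are standard (see~\cite{LurieHTT}). The category \(\Cat{Cat}\) is compactly generated, and its compact objects are exactly the retracts of finite colimits of the standard simplices \(\Delta^{n}\), equivalently the retracts of finite categories; in particular every finite category is compact. Furthermore \(\Cat{Cat}\) is cartesian closed, with internal hom the functor category \(\Fun(-,-)\), so \(-\times C\) is a left adjoint and the cartesian product preserves colimits in each variable; this is exactly the assertion that the cartesian structure lies in \(\CAlg(\Cat{Pr}^{\textnormal{L}})\).

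It remains to treat the compact objects. The unit is the terminal category \(\Delta^{0}\), which is finite, hence compact. For the multiplication, I would use that the product preserves colimits separately and that the compact objects are generated under finite colimits and retracts by the simplices, reducing the claim that a product of compacts is compact to the single observation that \(\Delta^{n}\times\Delta^{m}\) is compact; but this is a finite poset, hence a finite colimit of simplices, hence compact. This finishes the main statement. Now \cref{i:sm_po} is immediate, since the walking span \(\Lambda_{0}^{2}\simeq\Delta^{1}\sqcup_{\Delta^{0}}\Delta^{1}\) is a finite category and therefore compact. For \cref{i:sm_dis}, a \(\kappa\)-small set, regarded as a discrete category, is a \(\kappa\)-small coproduct of copies of \(\Delta^{0}\); as \(\Delta^{0}\) is compact and hence \(\kappa\)-compact, and \(\kappa\)-compact objects are closed under \(\kappa\)-small colimits, such a set is \(\kappa\)-compact.

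The main obstacle is the compatibility of the monoidal structure with compactness, i.e., the step that the cartesian product preserves compact objects and that this upgrades \(\Cat{Cat}\) from a commutative algebra in \(\Cat{Pr}^{\textnormal{L}}\) to one in \(\Cat{Pr}^{\aleph_{0}}\). This hinges on the explicit description of the compact objects of \(\Cat{Cat}\) as retracts of finite colimits of simplices, which reduces the product of arbitrary compacts to the finiteness of \(\Delta^{n}\times\Delta^{m}\), together with the fact—on which the whole packaging rests—that \(\Cat{Pr}^{\aleph_{0}}\) is a symmetric monoidal subcategory of \(\Cat{Pr}^{\textnormal{L}}\).
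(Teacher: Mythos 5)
Your overall strategy matches the paper's: reduce the \(\CAlg(\Cat{Pr}^{\aleph_{0}})\)-structure to the claim that a product of compact objects is compact, and verify that by exhibiting the relevant categories as finite colimits of simplices (the paper does exactly this, proving compact generation of \(\Cat{Cat}\) via complete Segal animas where you cite it, and then using the presentations \([1]\times[1]=[2]\amalg_{[1]}[2]\) and \(\Lambda^{2}_{0}=[1]\amalg_{[0]}[1]\)). However, one intermediate claim you state is genuinely false: that the compact objects of \(\Cat{Cat}\) are ``equivalently the retracts of finite categories; in particular every finite category is compact.'' If ``finite category'' means a category with finitely many objects and finitely many morphisms, this fails. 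For example, the one-object category \(B(\mathbb{Z}/2)\) is finite in that sense, but it is not compact in \(\Cat{Cat}\): since the inclusion \(\Cat{Ani}\to\Cat{Cat}\) is fully faithful and preserves colimits, compactness of \(B(\mathbb{Z}/2)\) in \(\Cat{Cat}\) would imply compactness in \(\Cat{Ani}\), which fails because \(B(\mathbb{Z}/2)\) is not finitely dominated (it has nonvanishing homology in infinitely many degrees). The walking idempotent is another finite category that is not compact. The underlying reason is that such categories have nerves with nondegenerate simplices in every dimension, so functors out of them involve infinitely many coherence data; this is precisely the \(\aleph_{0}\)-subtlety (idempotent completion) that forces the paper to treat \(\kappa=\aleph_{0}\) separately in the proof of \cref{xuizju} via~\cite[Corollary~4.4.5.21]{LurieHTT}. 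The correct characterization is the other one you state: retracts of finite colimits of the \(\Delta^{n}\), equivalently retracts of categories presented by \emph{finite simplicial sets}.

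Fortunately the error does not propagate, because every category to which you actually apply the principle is a finite poset: \(\Delta^{0}\), \(\Delta^{n}\times\Delta^{m}\), and \(\Lambda^{2}_{0}\). The nerve of a finite poset is a finite simplicial set, hence a finite iterated pushout of simplices along monomorphisms; such pushouts are homotopy pushouts in the Joyal model structure and therefore compute colimits in \(\Cat{Cat}\), so these objects are indeed compact (for \(\Lambda^{2}_{0}\) you in any case wrote down the presentation \(\Delta^{1}\amalg_{\Delta^{0}}\Delta^{1}\) explicitly, which is all that is needed for \cref{i:sm_po}). So your proof becomes correct once ``finite category'' is replaced throughout by ``category admitting a presentation by a finite simplicial set,'' and the compactness of \(\Delta^{n}\times\Delta^{m}\) and \(\Lambda^{2}_{0}\) is justified through that notion rather than by counting objects and morphisms. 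With that repair, your argument is a mild variant of the paper's: you cite compact generation of \(\Cat{Cat}\) and handle all products \(\Delta^{n}\times\Delta^{m}\), whereas the paper proves compact generation directly and, by observing that \([1]\) alone generates, only needs the single presentation \([1]\times[1]=[2]\amalg_{[1]}[2]\).
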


\begin{proof}
  Recall that~\(\Cat{Cat}\)
  can be identified with the category of complete Segal animas.
  There is a fully faithful functor
  \(
    G\colon\Cat{Cat}\to\Fun(\Cat{\Delta}^{\op},\Cat{Ani})
  \)
  given concretely by
  \(\cat{C}\mapsto([n]\mapsto\Fun([n],\cat{C})^{\simeq})\),
  which preserves filtered colimits.
  Hence its left adjoint~\(F\) preserves compact objects.
  It follows that \(\Cat{Cat}\) is compactly generated
  by compact objects~\([n]\) for \(n\geq 0\).
  Since \([0]\) is a retract of~\([1]\)
  and \([n+1]=[n]\amalg_{[0]}[1]\) for \(n\geq0\),
  it is generated by~\([1]\).
  To see that \(\Cat{Cat}\) lies in \(\CAlg(\Cat{Pr}^{\aleph_{0}})\),
  it suffices to show that \([1]\times[1]\) is compact,
  which holds since it can be presented as \([2]\amalg_{[1]}[2]\).

  Statement~\cref{i:sm_po} follows
  from the presentation \(\Lambda^{2}_{0}=[1]\amalg_{[0]}[1]\),
  while~\cref{i:sm_dis} is immediate,
  as it is a \(\kappa\)-small coproduct of~\([0]\).
\end{proof}

\begin{lemma}\label{xgvgzm}
  For a regular cardinal~\(\kappa\),
  the forgetful functor
  \(G\colon\Cat{Rex}^{\kappa}\to\Cat{Cat}\)
  preserves \(\kappa\)-filtered colimits.
\end{lemma}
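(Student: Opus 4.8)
The plan is to realize the colimit in \(\Cat{Rex}^{\kappa}\) by equipping the colimit computed in \(\Cat{Cat}\) with \(\kappa\)-small colimits. Fix a \(\kappa\)-filtered diagram \((\cat{C}_{i})_{i\in I}\) in \(\Cat{Rex}^{\kappa}\), with transition functors denoted \(\psi\), and let \(\cat{C}=\injlim_{i}G(\cat{C}_{i})\) be its colimit in \(\Cat{Cat}\), with canonical functors \(\varphi_{i}\colon\cat{C}_{i}\to\cat{C}\). It suffices to prove (a) that \(\cat{C}\) admits \(\kappa\)-small colimits and each \(\varphi_{i}\) preserves them, so that \(\cat{C}\in\Cat{Rex}^{\kappa}\) and the \(\varphi_{i}\) are morphisms there, and (b) that \((\cat{C},\varphi_{i})\) is then a colimit cocone in \(\Cat{Rex}^{\kappa}\); granting these, \(G\) carries the \(\Cat{Rex}^{\kappa}\)-colimit to \(\cat{C}\).

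I would rely on two standard properties of \(\kappa\)-filtered colimits in \(\Cat{Cat}\). First, every \(\kappa\)-small category \(K\) is \(\kappa\)-compact in \(\Cat{Cat}\): by \cref{xualzb} each \([n]\) is compact, \(\kappa\)-small colimits of \(\kappa\)-compact objects are \(\kappa\)-compact, and \(K\) is such a colimit of the \([n]\). Consequently any functor \(K\to\cat{C}\)—and, more generally, any \(\kappa\)-small configuration of objects and arrows—factors through some \(\varphi_{i}\). Second, mapping animas are filtered colimits: for \(x,y\) in a common \(\cat{C}_{j}\) one has \(\Map_{\cat{C}}(\varphi_{j}x,\varphi_{j}y)\simeq\injlim_{k}\Map_{\cat{C}_{k}}(\psi x,\psi y)\), which follows from the fully faithful embedding \(\Cat{Cat}\hookrightarrow\Fun(\Cat{\Delta}^{\op},\Cat{Ani})\) of \cref{xualzb}, since filtered colimits are levelwise there and the extraction of \(\Map\) is a finite limit.

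For (a), given a \(\kappa\)-small diagram \(p\colon K\to\cat{C}\) I factor it as \(\varphi_{i}\circ\tilde{p}\); the colimit \(c\) of \(\tilde{p}\) exists in \(\cat{C}_{i}\), and I claim \(\varphi_{i}(c)\) is a colimit of \(p\) in \(\cat{C}\). Fixing a test object \(d\) and enlarging \(i\) so that \(d=\varphi_{i}(d')\), the mapping-anima description gives \(\Map_{\cat{C}}(\varphi_{i}(c),d)\simeq\injlim_{k}\Map_{\cat{C}_{k}}(\psi c,\psi d')\); since \(\psi\) is \(\kappa\)-cocontinuous, \(\psi c\) is the colimit of \(\psi\tilde{p}\), so each term equals \(\projlim_{K}\Map_{\cat{C}_{k}}(\psi\tilde{p},\psi d')\). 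The desired identification with \(\projlim_{K}\Map_{\cat{C}}(p,d)\) then amounts to interchanging the \(\kappa\)-filtered colimit over \(k\) with the \(\kappa\)-small limit over \(K\); the same computation shows each \(\varphi_{i}\) is \(\kappa\)-cocontinuous. For (b), any compatible family of \(\kappa\)-cocontinuous \(f_{i}\colon\cat{C}_{i}\to\cat{D}\) assembles to a unique \(f\colon\cat{C}\to\cat{D}\) in \(\Cat{Cat}\), and \(f\) is automatically \(\kappa\)-cocontinuous because, by (a), every \(\kappa\)-small colimit in \(\cat{C}\) is \(\varphi_{i}\) of one in some \(\cat{C}_{i}\); this identifies \(\Map_{\Cat{Rex}^{\kappa}}(\cat{C},\cat{D})\) with \(\projlim_{i}\Map_{\Cat{Rex}^{\kappa}}(\cat{C}_{i},\cat{D})\).

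The main obstacle is the interchange invoked in (a): that \(\kappa\)-filtered colimits commute with \(\kappa\)-small limits in \(\Cat{Ani}\). This is the sole nonformal ingredient, and it is precisely where the compatibility between the ``\(\kappa\)-filtered'' shape of \(I\) and the ``\(\kappa\)-small'' colimits defining \(\Cat{Rex}^{\kappa}\) enters; the remaining steps are bookkeeping with the compact generators \([n]\) of \(\Cat{Cat}\).
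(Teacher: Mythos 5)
Your proposal is correct, but it is precisely the ``concrete'' argument that the paper declines to spell out (``We can show this concretely, but we give a more conceptual proof here''), so the two routes genuinely differ. You verify everything at the level of mapping animas: \(\kappa\)-small categories are \(\kappa\)-compact in \(\Cat{Cat}\) (assembled from the compact objects \([n]\) of \cref{xualzb}), so every \(\kappa\)-small diagram and every test object descends to a finite stage; mapping animas in a filtered colimit of categories are filtered colimits of mapping animas (via the complete Segal embedding of \cref{xualzb}); and the decisive interchange of \(\kappa\)-filtered colimits with \(\kappa\)-small limits in \(\Cat{Ani}\) --- a fact the paper itself invokes in the proof of \cref{prod} --- closes the computation. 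The paper instead tests only against the two compact indexing categories of \cref{xualzb} (the walking span and \(\kappa\)-small sets, i.e., pushouts and \(\kappa\)-small coproducts), encodes ``admits and preserves \(\cat{K}\)-indexed colimits'' as left-adjointability of the diagonal \(R\colon\cat{C}\to\Fun(\cat{K},\cat{C})\), and then applies \cref{xh9qmw} (Haugseng's theorem plus the \(2\)-functoriality of left Kan extension) to propagate these adjoints to the colimit of the diagram. What each approach buys: yours is elementary and self-contained, and it makes explicit the final universality step (your part (b)) --- that once the \(\Cat{Cat}\)-colimit cocone lands in \(\Cat{Rex}^{\kappa}\) it is a colimit cocone there --- which the paper compresses into ``Therefore, the claim follows''; the paper's abstraction avoids all mapping-anima bookkeeping and, more importantly, its key lemma \cref{xh9qmw} is reused verbatim to prove monadicity in \cref{monadic}, so the machinery amortizes. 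Two small points you should tighten: justify that a \(\kappa\)-small category is a \(\kappa\)-small colimit of the \([n]\) (colimit over its category of simplices for uncountable \(\kappa\), and a finite cell induction when \(\kappa=\aleph_{0}\)), and note that your chain of equivalences in (a) is induced by the canonical cocone map, so that checking it objectwise in the test object \(d\) indeed establishes the colimit property.
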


\begin{proof}
  Let \(P\) be a \(\kappa\)-directed poset.
  Consider a colimit diagram \(\cat{C}\colon P^{\triangleright}\to\Cat{Cat}\)
  such that \(\cat{C}\rvert_{P}\) takes values to~\(\Cat{Rex}^{\kappa}\).
  We need to see that
  this factors through~\(\Cat{Rex}^{\kappa}\).
  We can show this concretely,
  but we give a more conceptual proof here.
  Let \(\cat{K}\) be either of the two categories in \cref{xualzb}.
  By \cref{xualzb},
  \(\Fun(\cat{K},\cat{C}(\X))\) is also a colimit diagram.
  We have the diagonal \(R\colon\cat{C}\to\Fun(\cat{K},\cat{C})\).
  By assumption,
  this satisfies
  the hypothesis of \cref{xh9qmw} below
  with “right” replaced by “left.”
  Hence,
  we see that
  \(\cat{C}(\infty)\) admits colimits indexed by~\(\cat{K}\)
  and
  \(\cat{C}(p)\to\cat{C}(\infty)\)
  preserves them for any \(p\in P\).
  Therefore, the claim follows.
\end{proof}

\begin{lemma}\label{xh9qmw}
  Let \(\cat{K}\) be an \(\infty\)-category
  and \(F(\X)\colon\cat{C}(\X)\to\cat{D}(\X)\)
  be a natural transformation
  between objects in \(\Fun(\cat{K}^{\triangleright},\Cat{Cat})\).
  Suppose that any morphism \(K\to L\) in~\(\cat{K}\),
  the square
  \begin{equation*}
    \begin{tikzcd}
      \cat{C}(K)\ar[r]\ar[d]&
      \cat{D}(K)\ar[d]\\
      \cat{C}(L)\ar[r]&
      \cat{D}(L)
    \end{tikzcd}
  \end{equation*}
  is right adjointable.
  When \(\cat{C}\) and \(\cat{D}\) are colimit diagrams,
  we have the same adjointability property
  for any morphism \(K\to\infty\) in~\(\cat{K}^{\triangleright}\).
\end{lemma}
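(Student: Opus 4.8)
The plan is to reduce the statement to two standard facts: that a colimit of \(\Cat{Cat}\)-valued diagrams is computed as a localization of its coCartesian unstraightening, and that adjunctions descend along localizations. First I would use the hypothesis that \(\cat{C}\) and \(\cat{D}\) are colimit diagrams: the cone maps then exhibit \(\cat{C}(\infty)\simeq\operatorname{colim}_{\cat{K}}\cat{C}\rvert_{\cat{K}}\) and \(\cat{D}(\infty)\simeq\operatorname{colim}_{\cat{K}}\cat{D}\rvert_{\cat{K}}\), and \(F(\infty)\) is exactly the functor induced on colimits by the natural transformation \(F\rvert_{\cat{K}}\). So the task becomes to show that this induced functor admits a right adjoint and that, for each cone edge \(\beta_{K}\colon K\to\infty\), the resulting square is right adjointable.

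Next I would unstraighten over \(\cat{K}\). This turns \(\cat{C}\rvert_{\cat{K}}\) and \(\cat{D}\rvert_{\cat{K}}\) into coCartesian fibrations \(\mathcal{M},\mathcal{N}\to\cat{K}\), and \(F\rvert_{\cat{K}}\) into a functor \(\Phi\colon\mathcal{M}\to\mathcal{N}\) over \(\cat{K}\) preserving coCartesian edges, with fiber \(\Phi_{K}=F(K)\). Since each \(F(K)\) admits a right adjoint \(G(K)\) by hypothesis, the theory of relative adjoints~\cite[Section~7.3.2]{LurieHA} produces a relative right adjoint \(\Psi\colon\mathcal{N}\to\mathcal{M}\) over \(\cat{K}\) with \(\Psi_{K}=G(K)\). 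The key observation is that the right adjointability hypothesis over an edge \(\alpha\colon K\to L\) is precisely the Beck–Chevalley equivalence \(\cat{C}(\alpha)\circ G(K)\simeq G(L)\circ\cat{D}(\alpha)\), which is equivalent to the assertion that \(\Psi\) again preserves coCartesian edges, i.e.\ that the \(G(K)\) assemble into a covariant natural transformation \(\cat{D}\rvert_{\cat{K}}\to\cat{C}\rvert_{\cat{K}}\).

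I would then invoke the standard description of colimits of \(\infty\)-categories as localizations: \(\cat{C}(\infty)\simeq\mathcal{M}[W_{\mathcal{M}}^{-1}]\) and \(\cat{D}(\infty)\simeq\mathcal{N}[W_{\mathcal{N}}^{-1}]\), where \(W\) denotes the coCartesian edges. Because both \(\Phi\) and \(\Psi\) preserve coCartesian edges, they descend to functors \(\bar\Phi=F(\infty)\) and \(\bar\Psi\) between the localizations; since the source and target of the relative unit and counit of \(\Phi\dashv\Psi\) both descend, so do these transformations, and the triangle identities survive because localization is a functor. This exhibits \(\bar\Psi\) as a right adjoint of \(F(\infty)\), giving the existence of \(G(\infty):=\bar\Psi\). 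For the cone adjointability, the naturality square of \(\Psi\) along the fiber inclusion \(\cat{D}(K)=\mathcal{N}_{K}\hookrightarrow\mathcal{N}\to\cat{D}(\infty)\) descends to a commuting square \(\cat{C}(\beta_{K})\circ G(K)\simeq G(\infty)\circ\cat{D}(\beta_{K})\), which is the mate witnessing right adjointability of the square attached to \(\beta_{K}\).

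The hard part will be the coherence bookkeeping in the last two steps: checking that the edgewise adjointability genuinely forces \(\Psi\) to preserve coCartesian edges, and that the descended unit and counit both satisfy the triangle identities and realize the Beck–Chevalley mate for the cone edges rather than some a~priori unrelated \(2\)-cell. I expect the cleanest way to discharge these coherences is to package the whole construction as an adjunction in the \((\infty,2)\)-category \(\Fun(\cat{K},\Cat{Cat})\simeq\Cat{Cat}_{/\cat{K}}^{\mathrm{coCart}}\), and to observe that \(\operatorname{colim}_{\cat{K}}\), being the left adjoint of the constant-diagram \(2\)-functor, preserves adjunctions; in this formulation the edge-by-edge hypothesis is consumed exactly in verifying that \(F\rvert_{\cat{K}}\) is a left adjoint in this \((\infty,2)\)-category, and the cone squares come for free from functoriality of \(\operatorname{colim}_{\cat{K}}\).
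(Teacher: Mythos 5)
Your proposal is correct in strategy, and its punchline is in fact the paper's proof: the paper handles this lemma in two lines by citing~\cite[Theorem~4.6]{Haugseng21}, which says exactly that componentwise right adjoints plus right adjointability of all naturality squares amounts to \(F\rvert_{\cat{K}}\) being a left adjoint \emph{inside} the \((\infty,2)\)-category \(\Fun(\cat{K},\Cat{Cat})\), and then applying the \(2\)-categorical functoriality of left Kan extension \(\Fun(\cat{K},\Cat{Cat})\to\Fun(\cat{K}^{\triangleright},\Cat{Cat})\). The body of your argument (unstraightening, fiberwise right adjoints assembling into a relative right adjoint via~\cite[Section~7.3.2]{LurieHA}, the observation that the Beck--Chevalley condition is equivalent to the relative adjoint preserving coCartesian edges, and descent of the adjunction along the localization computing the colimit) is essentially a self-contained proof of that cited black box together with the descent step; it is a legitimately different, more hands-on route, at the cost of the coherence bookkeeping you flag (identifying descended \(2\)-cells with the canonical mates, triangle identities after localization). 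One refinement worth adopting from the paper: instead of applying the \(2\)-functor \(\operatorname{colim}_{\cat{K}}\colon\Fun(\cat{K},\Cat{Cat})\to\Cat{Cat}\), apply the left Kan extension \(2\)-functor along \(\cat{K}\subset\cat{K}^{\triangleright}\). Since \(\cat{C}\) and \(\cat{D}\) are colimit diagrams, they (and \(F\)) are recovered as the left Kan extensions of their restrictions, so \(F\) itself becomes a left adjoint in \(\Fun(\cat{K}^{\triangleright},\Cat{Cat})\), and a second application of Haugseng's theorem, now over \(\cat{K}^{\triangleright}\), yields right adjointability of \emph{every} naturality square, including those attached to the cone edges \(K\to\infty\). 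With \(\operatorname{colim}_{\cat{K}}\) alone, the cone squares do not quite ``come for free'': you still need the (true, but not formal) compatibility of the \(2\)-natural unit of \(\operatorname{colim}\dashv\mathrm{const}\) with mates to see that the commuting square produced by \(2\)-naturality is the Beck--Chevalley square, which is precisely the kind of coherence the Kan-extension formulation sidesteps.
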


\begin{proof}
  We use \(2\)-category theory.
  By using~\cite[Theorem~4.6]{Haugseng21},
  we see that
  in this situation,
  \(F(\X)\) admits a left adjoint
  in \(\Fun(\cat{K},\Cat{Cat})\).
  The desired result follows
  from the \(2\)-categorical functoriality
  of the left Kan extension functor
  \(\Fun(\cat{K},\Cat{Cat})
  \to\Fun(\cat{K}^{\triangleright},\Cat{Cat})\).
\end{proof}

We record the following
as an application of the argument above:

\begin{proposition}\label{monadic}
For a regular cardinal~\(\kappa\),
  the forgetful functor
  \(G\colon\Cat{Rex}^{\kappa}\to\Cat{Cat}\)
  is monadic.
\end{proposition}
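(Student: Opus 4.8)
The plan is to apply the Barr–Beck–Lurie monadicity theorem to the forgetful functor $G\colon\Cat{Rex}^{\kappa}\to\Cat{Cat}$. Recall that this theorem requires three hypotheses: that $G$ admits a left adjoint (equivalently, that $G$ is conservative and has the appropriate adjoint structure — more precisely, we need $G$ to have a left adjoint $F$), that $G$ is conservative, and that $G$ creates (preserves and reflects) colimits of $G$-split simplicial objects. I would verify each in turn, drawing on the machinery already assembled above.

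**First** I would note that the left adjoint $F$ to $G$ already appeared in the proof of \cref{xuizju}: it is the free functor sending a category to the $\kappa$-cocompletion (the free category with $\kappa$-small colimits), and its existence is guaranteed by \cite[Lemma~4.8.4.2]{LurieHA} together with the adjoint functor theorem, since $\Cat{Rex}^{\kappa}$ is presentable. \textbf{Next}, conservativity of $G$ is essentially immediate: a morphism in $\Cat{Rex}^{\kappa}$ is a functor preserving $\kappa$-small colimits, and if its underlying functor of categories is an equivalence, then it is an equivalence of categories that automatically preserves and reflects the relevant colimits, hence an equivalence in $\Cat{Rex}^{\kappa}$. \textbf{The main work} is the third condition, and here is where the lemmas above do the heavy lifting. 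I would reduce the statement about $G$-split simplicial objects to the adjointability criterion of \cref{xh9qmw}: a $G$-split simplicial object is one whose image under $G$ admits a splitting, and such splittings furnish exactly the right-adjointability data that \cref{xh9qmw} consumes. The same diagonal-plus-adjointability argument used in \cref{xgvgzm} — feeding the test category $\cat{K}\in\{\Lambda^{2}_{0},[1]\times[1]\}$ from \cref{xualzb} through the functor $\Fun(\cat{K},\X)$ — shows that the colimit of a $G$-split simplicial object, computed in $\Cat{Cat}$, again has $\kappa$-small colimits and that the structure maps preserve them. This is precisely the assertion that $G$ creates these colimits.

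\textbf{I expect the main obstacle} to be the bookkeeping in the third step: one must confirm that the splitting data on a $G$-split simplicial object translates cleanly into the hypothesis of \cref{xh9qmw} (with the roles of left and right adjoints tracked correctly, as in the parenthetical remark in \cref{xgvgzm}), and that the conclusion — stability of $\kappa$-small colimits under the relevant colimit in $\Cat{Cat}$ — follows uniformly for all the generating shapes $\cat{K}$ at once. Since $\kappa$-small colimits are built from finite colimits together with $\kappa$-small coproducts, and \cref{xualzb} supplies compactness of the walking span $\Lambda^{2}_{0}$, of $[1]\times[1]$, and of $\kappa$-small discrete sets, these generating shapes suffice to detect all $\kappa$-small colimits, so the argument of \cref{xgvgzm} applies verbatim. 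Once these three hypotheses are in place, monadicity is exactly the conclusion of the Barr–Beck–Lurie theorem, and the proposition follows.
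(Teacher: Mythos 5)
Your overall skeleton matches the paper's proof: Lurie's monadicity theorem, conservativity of \(G\), and then an adaptation of the argument of \cref{xgvgzm} via \cref{xh9qmw} to handle \(G\)-split simplicial objects. But the key step has a genuine gap, and it is exactly the point where the simplicial case differs from the filtered case.

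To run the argument of \cref{xgvgzm} for an augmented simplicial object \(\cat{C}_{\bullet}\) in \(\Cat{Cat}\), you need two separate things: (a) for each morphism in the index category, the square formed by the diagonals \(\cat{C}(\X)\to\Fun(\cat{K},\cat{C}(\X))\) is adjointable (with ``right'' replaced by ``left,'' as in \cref{xgvgzm}) --- this comes from each term lying in \(\Cat{Rex}^{\kappa}\) and the face and degeneracy maps preserving \(\kappa\)-small colimits, and has nothing to do with the splitting; and (b) both \(\cat{C}_{\bullet}\) and \(\Fun(\cat{K},\cat{C}_{\bullet})\) are colimit diagrams in \(\Cat{Cat}\), which is the hypothesis ``when \(\cat{C}\) and \(\cat{D}\) are colimit diagrams'' of \cref{xh9qmw}. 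Your proposal conflates these: you assert that the splitting ``furnishes exactly the right-adjointability data that \cref{xh9qmw} consumes,'' and you then invoke compactness of the test categories to conclude that the argument of \cref{xgvgzm} applies ``verbatim.'' In \cref{xgvgzm}, compactness of \(\cat{K}\) is what guarantees (b), because the index there is \(\kappa\)-filtered and \(\Fun(\cat{K},\X)\) preserves \(\kappa\)-filtered colimits. For a geometric realization this reasoning breaks down: \(\Cat{\Delta}^{\op}\)-indexed colimits are not filtered, so compactness of \(\cat{K}\) gives no control over \(\Fun(\cat{K},\X)\) applied to them. The correct replacement --- the one idea missing from your proposal --- is that the splitting makes the colimit \emph{absolute}: applying any functor, in particular \(\Fun(\cat{K},\X)\), to a split simplicial object yields again a split simplicial object, hence a colimit diagram. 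This is how the paper secures (b); afterwards \cref{xh9qmw} extends the adjointability to the augmentation, showing that the realization admits \(\kappa\)-small colimits and that the structure maps preserve them, i.e., that the colimit lifts to \(\Cat{Rex}^{\kappa}\) and is preserved by \(G\). A minor additional slip: the two test categories of \cref{xualzb} are the walking span \(\Lambda^{2}_{0}\) and the \(\kappa\)-small discrete categories, not \([1]\times[1]\); the latter appears in the proof of \cref{xualzb} only to check that the cartesian product preserves compact objects.
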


\begin{proof}
  We use Lurie’s version of the Beck monadicity theorem.
  Since \(G\) is conservative,
  it suffices to show that
  \(G\) preserves \(G\)-split geometric realizations.
  Consider such a simplicial diagram in \(\Cat{Rex}^{\kappa}\).
  We take its (split) geometric realization in \(\Cat{Cat}\)
  to obtain an augmented simplicial object \(\cat{C}_{\bullet}\).
  It suffices to show that this lifts to \(\Cat{Rex}^{\kappa}\).
  Since \(\Fun(\cat{K},\cat{C}_{\bullet})\)
  is also a split simplicial object in~\(\Cat{Cat}\)
  and hence a colimit diagram,
  the desired result follows the same argument
  as in \cref{xgvgzm}.
\end{proof}

We conclude this subsection with a few remarks:

\begin{remark}\label{xvm1ia}
  We only need the fact that \(\Cat{Pr}^{\kappa}\) is presentable
  to obtain our definition of \(n\Cat{Pr}\):
  For a sequence of regular cardinals
  \(\kappa_{1}\leq\dotsb\leq\kappa_{n}\),
  we can define
  \(n\Cat{Pr}^{\kappa_{1},\dotsc,\kappa_{n}}\)
  inductively as
  \(0\Cat{Pr}=\Cat{Ani}\) and
  \begin{equation*}
    (n+1)\Cat{Pr}^{\kappa_{1},\dotsc,\kappa_{n+1}}
    =\Mod_{n\Cat{Pr}^{\kappa_{1},\dotsc,\kappa_{n}}}(\Cat{Pr}^{\kappa_{n+1}})
  \end{equation*}
  and we can take its colimit.
\end{remark}

\begin{remark}\label{xhq2j7}
  Our proof of \cref{xuizju}
  shows that
  for regular cardinals~\(\kappa\leq\lambda\leq\mu\),
  the functor
  \(n\Cat{Pr}^{\kappa}\to n\Cat{Pr}^{\lambda}\)
  preserves \(\mu\)-compact objects.
  Using this,
  we can also write~\(n\Cat{Pr}\)
  as \(\injlim_{\kappa}(n\Cat{Pr}^{\kappa})_{\kappa}\).
\end{remark}

\begin{remark}\label{prsp}
  For \(n\geq0\),
  we have a fully faithful functor
  \begin{equation*}
    \Mod_{(\X)}(n\Cat{Pr}^{\kappa})\colon
    \CAlg(n\Cat{Pr}^{\kappa})\to\CAlg((n+1)\Cat{Pr}^{\kappa}),
  \end{equation*}
  which is a morphism in \(\CAlg(\Cat{Pr}^{\kappa})\)
  by \cref{x8fchl} below.
  We then take the colimit
  \begin{equation*}
    \Cat{PrSp}^{\kappa}=\injlim_{n}\CAlg(n\Cat{Pr}^{\kappa})
  \end{equation*}
  in \(\CAlg(\Cat{Pr}^{\kappa})\)
  and call it the category
  of \emph{\(\kappa\)-compactly generated categorical spectra}
  (see references in~\cite[Remark~3.3.6]{Masuda24}
  for the origin of the notion of categorical spectra).
  Concretely,
  an object of \(\Cat{PrSp}^{\kappa}\)
  is a sequence \((\cat{C}_{n})_{n\geq0}
  \in\prod_{n\geq0}\CAlg(n\Cat{Pr}^{\kappa})\)
  equipped
  with equivalences \(\cat{C}_{n}\simeq\End_{\cat{C}_{n+1}}(\unit)\).

  Furthermore,
  we can define
  \begin{equation*}
    \Cat{PrSp}=\injlim_{\kappa}\Cat{PrSp}^{\kappa}
  \end{equation*}
  to be the category of \emph{presentable categorical spectra}.
  However,
  this notion is subtler,
  since an object is \emph{not} a sequence
  \((\cat{C}_{n})_{n\geq0}\in\prod_{n\geq0}\CAlg(n\Cat{Pr})\)
  with equivalences \(\cat{C}_{n}\simeq\End_{\cat{C}_{n+1}}(\unit)\)
  as \cref{omega} shows.
\end{remark}

\begin{lemma}\label{x8fchl}
For \(\cat{C}\in\CAlg(\Cat{Pr}^{\kappa})\),
  the functor
  \begin{equation*}
    \Mod_{(\X)}(\cat{C})\colon
    \CAlg(\cat{C})
    \to
    \CAlg(\Mod_{\cat{C}}(\Cat{Pr}^{\kappa}))
  \end{equation*}
  is fully faithful
  and determines a morphism
  in \(\CAlg(\Cat{Pr}^{\kappa})\).
\end{lemma}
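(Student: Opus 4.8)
The plan is to realize $\Mod_{(\X)}(\cat{C})$ as a fully faithful left adjoint between two objects of $\CAlg(\Cat{Pr}^{\kappa})$, and then to check separately that it preserves $\kappa$-compact objects.

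First I would produce a right adjoint. Identifying $\CAlg(\Mod_{\cat{C}}(\Cat{Pr}^{\kappa}))$ with the category of $\cat{C}$-algebras in $\Cat{Pr}^{\kappa}$, every object carries a structure morphism $u\colon\cat{C}\to\cat{D}$ in $\CAlg(\Cat{Pr}^{\kappa})$; since $\cat{C}$ is presentable, $u$ admits a lax symmetric monoidal right adjoint $u^{R}$, and I set $\Gamma(\cat{D})=u^{R}(\unit_{\cat{D}})\in\CAlg(\cat{C})$. By the universal property of the module category (a form of base change, \cite{LurieHA}), the construction $A\mapsto\Mod_{A}(\cat{C})$ is left adjoint to $\Gamma$, with adjunction equivalence $\Map(\Mod_{A}(\cat{C}),\cat{D})\simeq\Map_{\CAlg(\cat{C})}(A,\Gamma(\cat{D}))$. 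The structure morphism of $\Mod_{A}(\cat{C})$ is the free-module functor $c\mapsto A\otimes c$, whose right adjoint is the forgetful functor, and the monoidal unit of $\Mod_{A}(\cat{C})$ is $A$; therefore $\Gamma(\Mod_{A}(\cat{C}))\simeq A$ and the unit of the adjunction is an equivalence. This gives the full faithfulness, and—being a left adjoint—the preservation of all colimits.

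Second, I would pin down the symmetric monoidal picture. Both $\CAlg(\cat{C})$ and $\CAlg(\Mod_{\cat{C}}(\Cat{Pr}^{\kappa}))$ carry their cocartesian symmetric monoidal structure, in which the tensor product is the coproduct and coincides with the relative tensor product of algebras \cite{LurieHA}; by \cref{xm1vro,xuizju} the base $\Mod_{\cat{C}}(\Cat{Pr}^{\kappa})$ is again in $\CAlg(\Cat{Pr}^{\kappa})$. Both categories lie in $\CAlg(\Cat{Pr}^{\kappa})$: the forgetful functor to the base preserves sifted colimits, so free algebras on $\kappa$-compact objects are $\kappa$-compact and generate; the unit is initial, hence $\kappa$-compact; and the coproduct of two $\kappa$-compact objects is $\kappa$-compact because finite products commute with $\kappa$-filtered colimits in $\Cat{Ani}$. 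A finite-coproduct-preserving functor between cocartesian symmetric monoidal categories is canonically symmetric monoidal, so the left adjoint $\Mod_{(\X)}(\cat{C})$ is symmetric monoidal (concretely this is the base-change equivalence $\Mod_{A\otimes_{\cat{C}}B}(\cat{C})\simeq\Mod_{A}(\cat{C})\otimes_{\cat{C}}\Mod_{B}(\cat{C})$).

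It remains to show that $\Mod_{(\X)}(\cat{C})$ preserves $\kappa$-compact objects, equivalently \cite{LurieHTT} that $\Gamma$ preserves $\kappa$-filtered colimits; this is the main obstacle. Let $\cat{D}=\injlim_{i}\cat{D}_{i}$ be a $\kappa$-filtered colimit in $\CAlg(\Mod_{\cat{C}}(\Cat{Pr}^{\kappa}))$, computed in $\Cat{Pr}^{\kappa}$. By \cref{xgvgzm}, on $\kappa$-compact objects this becomes a colimit in $\Cat{Cat}$, so mapping spaces between $\kappa$-compact objects are the associated filtered colimits. For $c\in\cat{C}_{\kappa}$ both $u(c)$ and $\unit_{\cat{D}}$ are $\kappa$-compact and arise from some stage $i$, whence
\begin{align*}
  \Map_{\cat{C}}(c,\Gamma(\cat{D}))
  &\simeq\Map_{\cat{D}}(u(c),\unit_{\cat{D}})
  \simeq\injlim_{i}\Map_{\cat{D}_{i}}(u_{i}(c),\unit_{\cat{D}_{i}})\\
  &\simeq\injlim_{i}\Map_{\cat{C}}(c,\Gamma(\cat{D}_{i}))
  \simeq\Map_{\cat{C}}\bigl(c,\injlim_{i}\Gamma(\cat{D}_{i})\bigr),
\end{align*}
the middle steps using the adjunctions $u\dashv u^{R}$ and $u_{i}\dashv u_{i}^{R}$, and the last using that $c$ is $\kappa$-compact. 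As $\cat{C}_{\kappa}$ generates $\cat{C}$, this forces $\Gamma(\cat{D})\simeq\injlim_{i}\Gamma(\cat{D}_{i})$. The crux is thus to control the unit object and the right adjoints $u^{R}$ across a $\kappa$-filtered colimit of presentable categories; restricting to $\kappa$-compact objects and invoking \cref{xgvgzm} is what makes this tractable.
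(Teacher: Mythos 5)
Your first and third paragraphs are, in substance, exactly the paper's proof of \cref{x8fchl}. The paper likewise works with the adjunction whose right adjoint is \(\cat{D}\mapsto\End_{\cat{D}}(\unit)\) (your \(\Gamma(\cat{D})=u^{R}(\unit_{\cat{D}})\) is that same object), obtains full faithfulness and colimit preservation from it (by citing Lurie's Corollary~4.8.5.21, which you instead re-derive via the unit equivalence \(\Gamma(\Mod_{A}(\cat{C}))\simeq A\); that is fine), and then proves preservation of \(\kappa\)-compact objects by precisely your computation: test against \(c\in\cat{C}_{\kappa}\), use \cref{xgvgzm} together with the \(\kappa\)-compactness of \(c\otimes\unit\) and \(\unit\) in each \(\cat{D}_{i}\), and finally the \(\kappa\)-compactness of \(c\) in \(\cat{C}\).

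The defect is in your second paragraph. For \(\CAlg(\cat{C})\) (or the target) to be an object of \(\CAlg(\Cat{Pr}^{\kappa})\), the tensor product must preserve colimits separately in each variable — the multiplication has to be a morphism out of the Lurie tensor product in \(\Cat{Pr}^{\kappa}\) — and this is exactly what your list of checks omits. It is not an omission you can repair: for the cocartesian structure it is false. In \(\CAlg(\Cat{Sp})\), for instance, the coproduct is the smash product over \(\SS\), so \(A\otimes(B_{1}\amalg B_{2})\simeq A\otimes B_{1}\otimes B_{2}\), whereas \((A\otimes B_{1})\amalg(A\otimes B_{2})\simeq A\otimes B_{1}\otimes A\otimes B_{2}\); these differ, so \(A\otimes(\X)\) does not preserve coproducts of algebras, and hence the cocartesian structure on \(\CAlg(\Cat{Sp})\) is not a commutative algebra object of \(\Cat{Pr}^{\textnormal{L}}\). (Your observations about free algebras, the unit, and compactness of coproducts are correct but do not touch this requirement.) To be fair, the paper's own proof is silent on the algebra-structure clause: it establishes only full faithfulness, colimit preservation, and preservation of \(\kappa\)-compacts, with the symmetric monoidal functoriality implicitly taken from the \(\Mod\) construction of~\cite{LurieHA}; your base-change equivalence \(\Mod_{A\otimes B}(\cat{C})\simeq\Mod_{A}(\cat{C})\otimes_{\cat{C}}\Mod_{B}(\cat{C})\) is the right ingredient for that. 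But you should not assert that the cocartesian symmetric monoidal structures exhibit these categories as objects of \(\CAlg(\Cat{Pr}^{\kappa})\); as stated, that claim is wrong, so this part of your proposal does not go through.
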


\begin{proof}
  By~\cite[Corollary~4.8.5.21]{LurieHTT},
  this is fully faithful
  and preserves colimits.
  Hence it suffices to show that
  its right adjoint \(\cat{D}\mapsto\End_{\cat{D}}(\unit)\)
  preserves \(\kappa\)-filtered colimits.
  Consider a \(\kappa\)-filtered diagram
  \((\cat{D}_{i})_{i}\) in the target.
  For any \(C\in\cat{C}_{\kappa}\),
  we have
  \begin{equation*}
      \Map_{\cat{C}}\biggl(C,\injlim_{i}\End_{\cat{D}_{i}}(\unit)\biggr)
      \simeq
      \injlim_{i}\Map_{\cat{C}}(C,\End_{\cat{D}_{i}}(\unit))
      \simeq
      \injlim_{i}\Map_{\cat{D}_{i}}(C\otimes\unit,\unit).
  \end{equation*}
  Since \(C\otimes\unit\) and~\(\unit\)
  are \(\kappa\)-compact in \(\cat{D}_{i}\),
  by \cref{xgvgzm},
  we can continue the above equivalence as
  \begin{equation*}
    \injlim_{i}\Map_{\cat{D}_{i}}(C\otimes\unit,\unit)
    \simeq
    \Map_{\injlim_{i}\cat{D}_{i}}(C\otimes\unit,\unit)
    \simeq
    \Map_{\cat{C}}\biggl(C,\End_{\injlim_{i}\cat{D}_{i}}(\unit)\biggr),
  \end{equation*}
  and hence the claim follows.
\end{proof}

\subsection{More on compactness}\label{ss:cpt_more}

We here point out that \cref{xuizju}
is the best possible bound:

\begin{proposition}\label{xwabf7}
  Let \(\kappa<\lambda\) be regular cardinals.
  An object
  \(\cat{C}\in\Cat{Pr}^{\lambda}\)
  is \(\kappa\)-compact
  if and only if
  \(\cat{C}=0\).
\end{proposition}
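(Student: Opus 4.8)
The plan is to treat the two implications separately; the substance lies entirely in showing that a \emph{nonzero} $\cat{C}$ fails to be $\kappa$-compact. The easy direction is immediate: the zero object $0$ is the terminal category, which is also initial in $\Cat{Pr}^{\lambda}$, so $\Map_{\Cat{Pr}^{\lambda}}(0,\X)$ is constant at a point and hence preserves all filtered colimits; thus $0$ is $\kappa$-compact (indeed compact) for every $\kappa$.

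For the converse I would argue by contradiction, assuming $\cat{C}\neq0$ is $\kappa$-compact and producing a $\kappa$-filtered colimit that $\Map_{\Cat{Pr}^{\lambda}}(\cat{C},\X)$ does not preserve. Index by the ordinals $\alpha<\kappa$, a $\kappa$-directed poset since $\kappa$ is regular, and set $\cat{D}_{\alpha}=\cat{C}^{\alpha+1}$ (the $(\alpha+1)$-fold power), with transition maps inserting the initial object $\varnothing_{\cat{C}}$ in the new coordinates. Using $\cat{C}^{\alpha+1}\simeq\cat{C}\otimes\Cat{Ani}^{\alpha+1}$ together with the facts that $\cat{C}\otimes(\X)$ preserves colimits and that the free functor into $\Cat{Pr}^{\lambda}$ carries the inclusions $\underline{\alpha+1}\hookrightarrow\underline{\kappa}$ to these $\varnothing_{\cat{C}}$-insertions, I identify the colimit as $\cat{D}_{\infty}=\cat{C}^{\kappa}$. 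The point of the construction is the diagonal $\Delta\colon\cat{C}\to\cat{C}^{\kappa}=\cat{D}_{\infty}$: it preserves colimits (computed coordinatewise) and, as explained below, $\lambda$-compact objects, so it is a morphism of $\Cat{Pr}^{\lambda}$. If $\cat{C}$ were $\kappa$-compact, the class $[\Delta]$ would factor through some stage, i.e.\ $\Delta\simeq\iota_{\alpha}\circ g$ for the structure map $\iota_{\alpha}\colon\cat{D}_{\alpha}\to\cat{D}_{\infty}$; composing with a projection $\pr_{\alpha+1}\colon\cat{C}^{\kappa}\to\cat{C}$ onto a coordinate beyond $\alpha$ gives
\[
  \id_{\cat{C}}=\pr_{\alpha+1}\circ\Delta\simeq\pr_{\alpha+1}\circ\iota_{\alpha}\circ g,
\]
and since $\pr_{\alpha+1}\circ\iota_{\alpha}$ is the constant functor at $\varnothing_{\cat{C}}$, this forces $\id_{\cat{C}}$ to be constant at the initial object, i.e.\ $\cat{C}\simeq0$ — a contradiction.

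The main obstacle, and the only place the hypothesis $\kappa<\lambda$ enters, is to verify that $\cat{C}^{\kappa}$ lies in $\Cat{Pr}^{\lambda}$ and that $\Delta$ preserves $\lambda$-compact objects. Both come down to the interchange
\[
  \prod_{\beta<\kappa}\injlim_{i}\Map_{\cat{C}}(X,Y^{i}_{\beta})
  \simeq
  \injlim_{i}\prod_{\beta<\kappa}\Map_{\cat{C}}(X,Y^{i}_{\beta}),
\]
valid because a $\kappa$-indexed product commutes with $\lambda$-filtered colimits precisely when $\kappa<\lambda$; applied with $X$ a $\lambda$-compact object of $\cat{C}$ and the $(Y^{i}_{\beta})_{i}$ $\lambda$-filtered, it shows that $\Delta X$ is $\lambda$-compact, and more generally that $\cat{C}^{\kappa}$ is $\lambda$-compactly generated with $\lambda$-compact objects the tuples of $\lambda$-compact objects. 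This is sharp: for $\kappa\geq\lambda$ the interchange fails, in keeping with $\Cat{Pr}^{\lambda}$ being $\lambda$-compactly generated (\cref{xuizju}). Finally, the case $\lambda=\aleph_{0}$ is vacuous, as no infinite regular $\kappa$ satisfies $\kappa<\aleph_{0}$.
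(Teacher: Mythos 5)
Your proposal is correct and is essentially the paper's own proof: both realize the $\kappa$-indexed power/coproduct of $\cat{C}$ as a $\kappa$-filtered colimit in $\Cat{Pr}^{\lambda}$ of smaller powers glued along initial-object insertions, factor the diagonal $\cat{C}\to\cat{C}^{\kappa}$ through a stage using $\kappa$-compactness, and compose with a projection onto an omitted coordinate to force $\id_{\cat{C}}$ to be constant at the initial object. The only difference is bookkeeping: the paper cites \cref{prod} together with ambidexterity of small (co)products in $\Cat{Pr}$ where you verify the same facts directly via $\cat{C}\otimes\Cat{Ani}^{(\X)}$ and the commutation of $\kappa$-small products with $\lambda$-filtered colimits.
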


\begin{proof}
  Let \(I\) be a set of cardinality~\(\kappa\).
  We consider the coproduct \(\bigoplus_{i\in I}\cat{C}\)
  in~\(\Cat{Pr}\).
  By \cref{prod},
  it is also a product in \(\Cat{Pr}^{\lambda}\)
  and hence
  we obtain a diagonal morphism
  \(\cat{C}\to\bigoplus_{i\in I}\cat{C}\)
  in \(\Cat{Pr}^{\lambda}\).
  Since \(\cat{C}\) is \(\kappa\)-compact,
  this should factor
  through
  \(\bigoplus_{i\in I_{0}}\cat{C}\),
  where \(I_{0}\subset I\)
  is a subset with cardinality~\(<\kappa\).
  This is possible
  only when \(\cat{C}=0\).
\end{proof}

\subsection{Comparison with the original definition}\label{ss:orig}

We prove \cref{comparison} here
as \cref{xmhmpq}.
We first need to recall Stefanich’s definition.

In this section,
we take an enlargement of the universe
and write \(\Kappa=\lvert\Cat{Set}\rvert\).
As an ordinal, this is just \(\Cat{Ord}\),
the category of small ordinals.
We write \(\Cat{SET}\) for the category of sets
in the enlarged universe
and construct familiar categories from it;
we use capital letters for them.

\begin{example}\label{xz6q0c}
  We have \(\Cat{Ani}\simeq\Cat{ANI}_{\Kappa}\).
  In particular,
  we have \(\Cat{ANI}\simeq\IND_{\Kappa}(\Cat{Ani})\).
\end{example}

\begin{example}\label{xq40hr}
  For a small regular cardinal~\(\kappa\),
  we have
  \((\Cat{REX}^{\kappa})_{\Kappa}\simeq\Cat{Rex}^{\kappa}\).
  By \cref{xuizju},
  we have \(\Cat{REX}^{\kappa}\simeq\IND_{\Kappa}(\Cat{Rex}^{\kappa})\).
\end{example}

The point is that
\(\Cat{PR}\),
the category of presentable categories
in the enlarged universe,
is not equivalent to \(\IND_{\Kappa}(\Cat{Pr})\),
since \(\Cat{PR}_{\Kappa}\)
only contains~\(0\) by \cref{xwabf7}.
Instead, we have the following,
which was observed in~\cite[Proposition~5.1.4]{Stefanich}:

\begin{proposition}[Stefanich]\label{xjl7pp}
  The category
  \(\Cat{REX}^{\Kappa}\) is \(\Kappa\)-compactly generated and
  \(\Cat{Pr}\) is equivalent to
  the full subcategory spanned by \(\Kappa\)-compact objects.
  In particular,
  we have \(\Cat{REX}^{\Kappa}\simeq\IND_{\Kappa}(\Cat{Pr})\).
\end{proposition}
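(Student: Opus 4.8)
The plan is to prove the two assertions — that $\Cat{REX}^{\Kappa}$ is $\Kappa$-compactly generated and that the fully faithful inclusion $\Cat{Pr}\hookrightarrow\Cat{REX}^{\Kappa}$ identifies $\Cat{Pr}$ with $(\Cat{REX}^{\Kappa})_{\Kappa}$ — after which the equivalence $\Cat{REX}^{\Kappa}\simeq\IND_{\Kappa}(\Cat{Pr})$ is formal. Since $\Kappa$ is a regular cardinal of the enlarged universe and $\Kappa>\aleph_{0}$, so that $\Cat{REX}^{\Kappa}=\Cat{REX}^{\Kappa}_{\ic}$, the first assertion is exactly the instance of \cref{xuizju} carried out in the enlarged universe with $\kappa=\Kappa$. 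This also supplies the free functor $F\colon\Cat{CAT}\to\Cat{REX}^{\Kappa}$, left adjoint to the forgetful functor $G$; by the enlarged version of \cref{xgvgzm}, $G$ preserves $\Kappa$-filtered colimits, hence $F$ preserves $\Kappa$-compact objects, and the image of $F$ generates. The inclusion $\Cat{Pr}\hookrightarrow\Cat{REX}^{\Kappa}$ is fully faithful because on both sides the morphisms are the colimit-preserving functors, so what remains is to show it is essentially surjective onto $(\Cat{REX}^{\Kappa})_{\Kappa}$: that presentable categories are precisely the $\Kappa$-compact objects.

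For the inclusion $\Cat{Pr}\subseteq(\Cat{REX}^{\Kappa})_{\Kappa}$, I would write a presentable category as $\cat{C}=\Ind_{\lambda}(\cat{A})$ for a small regular cardinal $\lambda$ and a small $\cat{A}\in\Cat{Rex}^{\lambda}_{\ic}$, and use the universal property $\Map_{\Cat{REX}^{\Kappa}}(\Ind_{\lambda}\cat{A},\cat{D})\simeq\Map_{\Cat{REX}^{\lambda}}(\cat{A},\cat{D})$, valid for every cocomplete $\cat{D}$. By \cref{xq40hr}, $\cat{A}$ is $\Kappa$-compact in $\Cat{REX}^{\lambda}$, and since both forgetful functors $\Cat{REX}^{\Kappa}\to\Cat{CAT}$ and $\Cat{REX}^{\lambda}\to\Cat{CAT}$ are conservative and preserve $\Kappa$-filtered colimits (enlarged \cref{xgvgzm}, using $\lambda\leq\Kappa$), the inclusion $\Cat{REX}^{\Kappa}\hookrightarrow\Cat{REX}^{\lambda}$ preserves $\Kappa$-filtered colimits. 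Combining these, $\Map_{\Cat{REX}^{\Kappa}}(\cat{C},\X)$ preserves $\Kappa$-filtered colimits, so $\cat{C}$ is $\Kappa$-compact.

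The reverse inclusion is the crux. Since $F(\Cat{Cat})$ generates and consists of $\Kappa$-compact objects (here $\Cat{Cat}\simeq\Cat{CAT}_{\Kappa}$ as in \cref{xz6q0c}, and $F(\cat{A})=\PShv(\cat{A})$ is presentable), the standard description of compact objects presents $(\Cat{REX}^{\Kappa})_{\Kappa}$ as the closure of $\{\PShv(\cat{A})\}_{\cat{A}\in\Cat{Cat}}$ under retracts and $\Kappa$-small — that is, small — colimits formed in $\Cat{REX}^{\Kappa}$. Retracts are harmless, as a retract of a presentable category is presentable; the content is that $\Cat{Pr}$ is closed under small colimits formed in $\Cat{REX}^{\Kappa}$. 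The strategy is to show that the colimit $\colim^{\Cat{Pr}}\cat{C}_{i}$, which exists by the standard theory of presentable categories (cf.\ \cite[Section~5.5.3]{LurieHTT}) and whose underlying category is the limit of the $\cat{C}_{i}$ along their right adjoints, already satisfies the universal property of $\colim^{\Cat{REX}^{\Kappa}}\cat{C}_{i}$ against an arbitrary $\cat{D}\in\Cat{REX}^{\Kappa}$. For coproducts this is concrete: by \cref{prod} the coproduct in $\Cat{Pr}$ is the product of the underlying categories, and since every object of such a product is the small coproduct of its coordinate inclusions, a colimit-preserving functor out of it into any $\cat{D}$ is determined by its restrictions, exhibiting the same object as the coproduct in $\Cat{REX}^{\Kappa}$.

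The main obstacle is to upgrade this from coproducts to all small colimits, equivalently to promote the universal property from presentable test objects $\cat{D}$ — where it holds by the defining property of the $\Cat{Pr}$-colimit — to arbitrary $\cat{D}\in\Cat{REX}^{\Kappa}$. The key tool I would exploit is that $\Kappa$-filtered colimits commute with $\Kappa$-small limits in $\Cat{Ani}$: once a general small colimit is presented through coproducts and reflexive coequalizers, and once $\cat{D}$ is expressed as a $\Kappa$-filtered colimit of $\Kappa$-compact objects, both $\Map_{\Cat{REX}^{\Kappa}}(\colim^{\Cat{Pr}}\cat{C}_{i},\X)$ and $\lim_{i}\Map_{\Cat{REX}^{\Kappa}}(\cat{C}_{i},\X)$ commute with that colimit, the limit being indexed by the small — hence $\Kappa$-small — diagram. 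The genuinely delicate point, and where I expect the argument to demand the most care, is the reduction to test objects on which the comparison is already available: the $\Kappa$-compact stages of $\cat{D}$ are not yet known to be presentable, so one cannot invoke the $\Cat{Pr}$-universal property at them directly, and the mapping-out functors do not preserve the relevant $\Kappa$-small colimits. Resolving this requires analyzing the coequalizer case through the limit-of-underlying-categories description of $\Cat{Pr}$-colimits and verifying that this same limit computes the $\Cat{REX}^{\Kappa}$-colimit.
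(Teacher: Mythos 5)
Your reduction of the statement to three pieces is sensible, and two of them are correct: identifying the first assertion with the enlarged-universe instance of \cref{xuizju}, and the proof that presentable categories are \(\Kappa\)-compact in \(\Cat{REX}^{\Kappa}\) (via the universal property of \(\Ind_{\lambda}\), \cref{xq40hr}, and the enlarged \cref{xgvgzm}) is a clean and valid argument. But the step you yourself call the crux---that \(\Cat{Pr}\) is closed under small colimits formed in \(\Cat{REX}^{\Kappa}\), beyond coproducts---is where the proposal has a genuine gap, and the strategy you sketch for it is circular. Writing the test object \(\cat{D}\) as a \(\Kappa\)-filtered colimit of \(\Kappa\)-compact objects does validly reduce the universal-property check to \(\Kappa\)-compact test objects; but to conclude anything at such a test object you would need to invoke the \(\Cat{Pr}\)-universal property there, i.e., you would need to know that \(\Kappa\)-compact objects of \(\Cat{REX}^{\Kappa}\) are presentable---which is exactly the reverse inclusion \((\Cat{REX}^{\Kappa})_{\Kappa}\subseteq\Cat{Pr}\) that this closure statement was supposed to establish. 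Nor can the circle be broken by transfinite induction on the stages in which compact objects are built from the generators \(\PShv(\cat{A})\): the test objects range over \emph{all} compact objects, not only those in earlier stages, so the induction never closes. Your final sentence ("verifying that this same limit computes the \(\Cat{REX}^{\Kappa}\)-colimit") restates the claim rather than proving it. (A smaller point: in the \(\infty\)-categorical setting, small colimits are generated by coproducts and pushouts; coproducts and reflexive coequalizers do not suffice.)

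The gap can in fact be closed with the same device you used for compactness, applied on the source side rather than the target side. Given a pushout diagram \(\cat{B}\gets\cat{A}\to\cat{C}\) in \(\Cat{Pr}\), choose an uncountable \(\kappa\) such that all three categories are \(\kappa\)-compactly generated and both functors preserve \(\kappa\)-compact objects. The equivalence \(\Map_{\Cat{REX}^{\Kappa}}(\Ind_{\kappa}(\X),\cat{D})\simeq\Map_{\Cat{REX}^{\kappa}}(\X,\cat{D})\) then identifies maps out of \(\Ind_{\kappa}(\cat{B}_{\kappa}\amalg_{\cat{A}_{\kappa}}\cat{C}_{\kappa})\)---a presentable category---with maps out of the pushout \(\cat{B}_{\kappa}\amalg_{\cat{A}_{\kappa}}\cat{C}_{\kappa}\) formed in \(\Cat{REX}^{\kappa}\), whose universal property against an \emph{arbitrary} \(\cat{D}\) holds by definition; the only point to check is that this pushout of small categories is again small, which follows from \cref{xq40hr} since \(\Kappa\)-compact objects are closed under \(\Kappa\)-small colimits. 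This exhibits the \(\Cat{REX}^{\Kappa}\)-pushout of presentables as presentable, with no reduction of \(\cat{D}\) needed. Note finally that the paper itself takes an entirely different route: it attributes the proposition to Stefanich, and the alternative proof it sketches (\cref{xm4yc7}) avoids analyzing colimits of presentables altogether, instead showing that \(\injlim_{\kappa<\Kappa}\Cat{REX}^{\kappa}\to\Cat{REX}^{\Kappa}\) is an equivalence in \(\Cat{PR}\)---the key point being that having (or preserving) \(\Kappa\)-small colimits is the same as having (or preserving) \(\kappa\)-small colimits for all \(\kappa<\Kappa\)---and then passing to \(\Kappa\)-compact objects using \cref{xq40hr} together with the fact (\cref{xgvgzm}) that the colimit \(\Cat{Pr}=\injlim_{\kappa<\Kappa}\Cat{Pr}^{\kappa}\) may be computed in \(\Cat{REX}^{\Kappa}\).
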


\begin{remark}\label{xm4yc7}
  Since we use \cref{xjl7pp} below,
  we sketch an alternative proof.
  First,
  note that the colimit \(\Cat{Pr}=\injlim_{\kappa<\Kappa}\Cat{Pr}^{\kappa}\)
  can be taken in~\(\Cat{REX}^{\Kappa}\)
  instead of in \(\Cat{CAT}\) by \cref{xgvgzm}.
  Hence, in light of \cref{xq40hr},
  it suffices to show that
  \(\injlim_{\kappa<\Kappa}\Cat{REX}^{\kappa}\to\Cat{REX}^{\Kappa}\)
  is an equivalence in~\(\Cat{PR}\) (and hence in \(\Cat{PR}^{\Kappa}\)).
  We can then observe this directly;
  the key point is that
  having (or preserving) \(\Kappa\)-small colimits
  is equivalent to having (or preserving) \(\kappa\)-small colimits
  for all \(\kappa<\Kappa\).
\end{remark}

Based on this observation,
Stefanich made the following definition:

\begin{definition}[Stefanich]\label{xi91go}
  We consider the lax symmetric monoidal functor
  \begin{equation*}
    \LMod_{\pr}\colon\Alg(\Cat{REX}^{\Kappa})\to\Cat{REX}^{\Kappa}
  \end{equation*}
  that carries \(\cat{C}\)
  to the \(1\)-category of \(\Kappa\)-compact left \(\cat{C}\)-modules.
  We write \(\Mod_{\pr}\)
  for what we obtain by applying \(\CAlg\) to this,
  which is an endofunctor of \(\CAlg(\Cat{REX}^{\Kappa})\).
  We define \(n\Cat{Pr}^{\Ste}\) to be
  \((\Mod_{\pr})^{n}(\Cat{Ani})\).
\end{definition}

We prove that this coincides with our definition:

\begin{theorem}\label{xmhmpq}
  There is a canonical equivalence
  \(n\Cat{Pr}\to n\Cat{Pr}^{\Ste}\).
  In particular,
  \(n\Cat{Pr}^{\Ste}\) does not depend on the enlargement.
\end{theorem}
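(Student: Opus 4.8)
The plan is to prove, by induction on $n$, the stronger statement that there is a canonical equivalence $n\Cat{Pr}^{\Ste}\simeq n\Cat{Pr}$ in $\CAlg(\Cat{REX}^{\Kappa})$, where $n\Cat{Pr}=\injlim_{\kappa}n\Cat{Pr}^{\kappa}$ is regarded as an object of $\CAlg(\Cat{REX}^{\Kappa})$ by computing the colimit there (legitimate by \cref{xgvgzm,xm4yc7}). The base case $n=0$ is the identification $0\Cat{Pr}^{\Ste}=\Cat{Ani}=0\Cat{Pr}$, the unit of $\Cat{REX}^{\Kappa}$ (cf.\ \cref{xz6q0c}). Assuming the equivalence at level $n$, the inductive step compares $(n+1)\Cat{Pr}^{\Ste}=\Mod_{\pr}(n\Cat{Pr})$ (using the inductive hypothesis) with $(n+1)\Cat{Pr}=\injlim_{\kappa}\Mod_{n\Cat{Pr}^{\kappa}}(\Cat{Pr}^{\kappa})$. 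Everything thus reduces to the following Key Lemma, applied to $\cat{A}_{\kappa}=n\Cat{Pr}^{\kappa}$: for a compatible system $(\cat{A}_{\kappa})_{\kappa}$ with $\cat{A}_{\kappa}\in\CAlg(\Cat{Pr}^{\kappa})$ and canonical transition morphisms, writing $\cat{A}=\injlim_{\kappa}\cat{A}_{\kappa}\in\CAlg(\Cat{REX}^{\Kappa})$, one has $\Mod_{\pr}(\cat{A})\simeq\injlim_{\kappa}\Mod_{\cat{A}_{\kappa}}(\Cat{Pr}^{\kappa})$ in $\CAlg(\Cat{REX}^{\Kappa})$.

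To prove the Key Lemma, I would identify both sides with the $\Kappa$-compact objects of $\Mod_{\cat{A}}(\Cat{REX}^{\Kappa})$. First, running the argument of \cref{x6wlid,xm1vro} at the cardinal $\Kappa$ shows that $\Mod_{\cat{A}_{\kappa}}(\Cat{REX}^{\Kappa})$ is $\Kappa$-compactly generated, with $(\Mod_{\cat{A}_{\kappa}}(\Cat{REX}^{\Kappa}))_{\Kappa}$ spanned by the modules whose underlying object is $\Kappa$-compact; by \cref{xjl7pp} this is precisely $\Mod_{\cat{A}_{\kappa}}(\Cat{Pr})$, which by Stefanich's definition is $\Mod_{\pr}(\cat{A}_{\kappa})$. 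Hence $\Mod_{\cat{A}_{\kappa}}(\Cat{REX}^{\Kappa})\simeq\IND_{\Kappa}(\Mod_{\pr}(\cat{A}_{\kappa}))$ naturally in $\kappa$, the transition morphisms being base changes and so preserving $\Kappa$-compact objects (their right adjoints preserve colimits, as for $G$ in \cref{x6wlid}). Second, I would show that $\Mod_{(\X)}(\Cat{REX}^{\Kappa})$ carries the $\Kappa$-filtered colimit $\cat{A}=\injlim_{\kappa}\cat{A}_{\kappa}$ to the colimit $\injlim_{\kappa}\Mod_{\cat{A}_{\kappa}}(\Cat{REX}^{\Kappa})$ computed in $\Cat{REX}^{\Kappa}$; this is the $\Kappa$-analogue of the compatibility used in \cref{x8fchl}, and is where one uses that the $\cat{A}_{\kappa}$ are genuinely presentable, so the forgetful functors preserve $\Kappa$-filtered colimits as in \cref{xgvgzm}. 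Combining these gives $\Mod_{\cat{A}}(\Cat{REX}^{\Kappa})\simeq\IND_{\Kappa}(\injlim_{\kappa}\Mod_{\pr}(\cat{A}_{\kappa}))$, a colimit along compact-preserving functors, so passing to $\Kappa$-compact objects yields $\Mod_{\pr}(\cat{A})\simeq\injlim_{\kappa}\Mod_{\pr}(\cat{A}_{\kappa})\simeq\injlim_{\kappa}\Mod_{\cat{A}_{\kappa}}(\Cat{Pr})$.

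It then remains to reconcile the two filtered systems, i.e.\ to promote the full inclusions $\Mod_{\cat{A}_{\kappa}}(\Cat{Pr}^{\kappa})\hookrightarrow\Mod_{\cat{A}_{\kappa}}(\Cat{Pr})$ of the $\kappa$-compactly generated modules into all presentable modules to an equivalence after $\injlim_{\kappa}$. This is the relative version of the identity $\injlim_{\kappa}\Cat{Pr}^{\kappa}=\Cat{Pr}$: every presentable $\cat{A}$-module is $\kappa$-compactly generated for all sufficiently large $\kappa$, and a cofinality argument in the spirit of \cref{xm4yc7}, where having $\Kappa$-small colimits is the same as having $\kappa$-small colimits for all $\kappa<\Kappa$, shows the induced map $\injlim_{\kappa}\Mod_{\cat{A}_{\kappa}}(\Cat{Pr}^{\kappa})\to\injlim_{\kappa}\Mod_{\cat{A}_{\kappa}}(\Cat{Pr})$ is an equivalence. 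Throughout I would carry the symmetric monoidal structures, checking that each comparison lands in $\CAlg(\Cat{REX}^{\Kappa})$ because the relevant base-change and free functors are symmetric monoidal (as in \cref{xm1vro,x8fchl}), so that the final equivalence is one of commutative algebras. Canonicity, and hence independence of $n\Cat{Pr}^{\Ste}$ from the enlargement, follows because $n\Cat{Pr}$ is defined without any enlargement.

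The main obstacle I anticipate is the Key Lemma, and within it the simultaneous requirement that forming module categories in $\Cat{REX}^{\Kappa}$ commute with the $\Kappa$-filtered colimit $\cat{A}=\injlim_{\kappa}\cat{A}_{\kappa}$ and be compatible with extraction of $\Kappa$-compact objects. The two systems $(\Mod_{\cat{A}_{\kappa}}(\Cat{Pr}^{\kappa}))_{\kappa}$ and $(\Mod_{\cat{A}_{\kappa}}(\Cat{Pr}))_{\kappa}$ have different terms and different transition functors—the former also raises the internal compactness cardinal from $\kappa$ to $\lambda$—so the delicate point is that in the colimit these two effects combine to exactly the passage from $\kappa$-compactly generated to all presentable modules. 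Keeping the two cardinals $\kappa<\Kappa$ cleanly separated while invoking \cref{xgvgzm,xjl7pp,xq40hr} is the crux of the argument.
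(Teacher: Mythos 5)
Your skeleton coincides with the paper's: induction on $n$ with a tautological base case, and everything funneled into a Key Lemma identifying $\Mod_{\pr}$ of the colimit algebra with $\injlim_{\kappa}\Mod_{\cat{A}_{\kappa}}(\Cat{Pr}^{\kappa})$ as commutative algebras. After unwinding Stefanich's definition ($\Mod_{\pr}(\cat{A})=(\Mod_{\cat{A}}(\Cat{REX}^{\Kappa}))_{\Kappa}$), your Key Lemma is precisely the paper's \cref{xfxio0}. Where you genuinely diverge is in its proof. The paper never lets go of the internal cardinal: its \cref{xucgor} identifies the $\Kappa$-compact modules in $\Cat{REX}^{\kappa}$ (note: small $\kappa$, not $\Kappa$) with the $\kappa$-compatible module categories, so that each stage of the colimit is already the term appearing in the definition of $n\Cat{Pr}$, and the whole lemma then reduces to the single statement $\injlim_{\kappa<\Kappa}\Cat{REX}^{\kappa}\simeq\Cat{REX}^{\Kappa}$ from \cref{xjl7pp,xm4yc7}. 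You instead work in $\Cat{REX}^{\Kappa}$ throughout, prove the clean statement $\Mod_{\pr}(\cat{A}_{\kappa})\simeq\Mod_{\cat{A}_{\kappa}}(\Cat{Pr})$ (which is correct, and arguably more transparent than anything the paper states), and are then forced into an extra reconciliation step $\injlim_{\kappa}\Mod_{\cat{A}_{\kappa}}(\Cat{Pr}^{\kappa})\simeq\injlim_{\kappa}\Mod_{\cat{A}_{\kappa}}(\Cat{Pr})$ that the paper's bookkeeping is designed to avoid.

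Two assertions in that extra step are wrong as written, and they are not cosmetic. First, $\Mod_{\cat{A}_{\kappa}}(\Cat{Pr}^{\kappa})\to\Mod_{\cat{A}_{\kappa}}(\Cat{Pr})$ is \emph{not} a full inclusion: by \cref{x2r4as}, a morphism in the source has underlying functor in $\Cat{Pr}^{\kappa}$, hence must preserve $\kappa$-compact objects, a condition absent in the target. (The failure of such functors to be equivalences or full is exactly the phenomenon that \cref{two_fil} controls and that \cref{s:san} exploits for the counterexamples.) So your ``cofinality argument'' cannot treat only objects: you must also show that every colimit-preserving $\cat{A}_{\kappa}$-module functor preserves $\lambda$-compact objects for all sufficiently large $\lambda$, compatibly with the base-change transition functors on both sides. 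This is true (the right adjoint of the underlying functor is accessible, and for large $\lambda$ the bar resolutions computing the base changes lie in $\Cat{Pr}^{\lambda}$), but it is a genuine accessibility argument, precisely the work that the paper's detour through $\Cat{REX}^{\kappa}$ and \cref{xucgor} makes unnecessary. Second, the colimit $\injlim_{\kappa}\Mod_{\cat{A}_{\kappa}}(\Cat{REX}^{\Kappa})$ cannot be ``computed in $\Cat{REX}^{\Kappa}$'': these module categories are large in the enlarged universe, so the colimit lives in $\Cat{PR}$ (or enlarged $\Cat{CAT}$); in a theorem whose entire content is universe bookkeeping, this should be stated correctly. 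With these two repairs your route does go through, at the cost of roughly duplicating, in module form, the argument that $\Cat{Pr}=\injlim_{\kappa}\Cat{Pr}^{\kappa}$.
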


\begin{lemma}\label{xucgor}
  For a small regular cardinal~\(\kappa\)
  and \(\cat{C}\in\Alg(\Cat{Rex}^{\kappa})\),
  the canonical functor
  \begin{equation*}
    \IND_{\Kappa}(\LMod_{\cat{C}}(\Cat{Rex}^{\kappa}))
    \to\LMod_{\cat{C}}(\Cat{REX}^{\kappa})
  \end{equation*}
  is an equivalence.
\end{lemma}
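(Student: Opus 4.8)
The plan is to realize \(\LMod_{\cat{C}}(\Cat{Rex}^{\kappa})\) as exactly the subcategory of \(\Kappa\)-compact objects of \(\LMod_{\cat{C}}(\Cat{REX}^{\kappa})\), after which the claimed equivalence becomes the tautology that a \(\Kappa\)-compactly generated category is recovered as \(\IND_{\Kappa}\) of its \(\Kappa\)-compact objects. First I would note that everything in \cref{ss:new} applies in the enlarged universe with \(\kappa\) replaced by the genuine regular cardinal~\(\Kappa\). By \cref{xq40hr} the inclusion \(\Cat{Rex}^{\kappa}\simeq(\Cat{REX}^{\kappa})_{\Kappa}\hookrightarrow\Cat{REX}^{\kappa}\) is the inclusion of \(\Kappa\)-compact objects, and since a tensor product of two such objects is again \(\Kappa\)-compact, this inclusion is symmetric monoidal and exhibits \(\Cat{REX}^{\kappa}\) as an object of \(\CAlg(\Cat{PR}^{\Kappa})\). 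In particular \(\cat{C}\) lifts to \(\Alg(\Cat{REX}^{\kappa})\), and \cref{x6wlid}, applied with the cardinal~\(\Kappa\) and \(\cat{D}=\Cat{REX}^{\kappa}\), shows that \(\LMod_{\cat{C}}(\Cat{REX}^{\kappa})\) is \(\Kappa\)-compactly generated, with the free modules \(F(X)\), \(X\in\Cat{Rex}^{\kappa}\), as generators. The canonical functor of the statement is the \(\IND_{\Kappa}\)-extension of the functor \(j\colon\LMod_{\cat{C}}(\Cat{Rex}^{\kappa})\to\LMod_{\cat{C}}(\Cat{REX}^{\kappa})\) induced by the symmetric monoidal inclusion, so it will suffice to show that \(j\) is fully faithful with essential image the \(\Kappa\)-compact objects.

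Next I would check full faithfulness of \(j\) by resolving. Using the two-sided bar construction, I write each \(M\in\LMod_{\cat{C}}(\Cat{Rex}^{\kappa})\) as the geometric realization of the free modules on \(\cat{C}^{\otimes\bullet}\otimes M\), a \(\Kappa\)-small colimit, and compute \(\Map_{\LMod_{\cat{C}}(\Cat{REX}^{\kappa})}(M,N)\) as the totalization of the spaces \(\Map_{\Cat{REX}^{\kappa}}(\cat{C}^{\otimes\bullet}\otimes M,N)\) via the free--forgetful adjunction. Each term pairs the \(\Kappa\)-compact object \(\cat{C}^{\otimes\bullet}\otimes M\) with \(N\), both lying in \(\Cat{Rex}^{\kappa}\), so full faithfulness of \(\Cat{Rex}^{\kappa}\hookrightarrow\Cat{REX}^{\kappa}\) lets me replace each term by the corresponding mapping space in \(\Cat{Rex}^{\kappa}\); the same totalization then recovers \(\Map_{\LMod_{\cat{C}}(\Cat{Rex}^{\kappa})}(M,N)\). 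Because colimits and retracts of modules are detected by the forgetful functors and \(\Cat{Rex}^{\kappa}\hookrightarrow\Cat{REX}^{\kappa}\) preserves \(\Kappa\)-small colimits and retracts, \(j\) preserves them too.

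Finally I would pin down the essential image. Each \(F(X)\) with \(X\in\Cat{Rex}^{\kappa}\) is \(\Kappa\)-compact and lies in the image of \(j\); since the image is closed under \(\Kappa\)-small colimits and retracts, it is contained in \((\LMod_{\cat{C}}(\Cat{REX}^{\kappa}))_{\Kappa}\). Conversely, the proof of \cref{x6wlid} identifies the \(\Kappa\)-compact objects with the closure of the free modules \(F(\Cat{Rex}^{\kappa})\) under \(\Kappa\)-small colimits and retracts, and this closure already lies in the image of \(j\); the two inclusions give equality. Applying \(\IND_{\Kappa}\) then turns \(j\) into the canonical equivalence.

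I expect the main obstacle to be the identification of the \(\Kappa\)-compact objects: both the full faithfulness computation, which requires the bar resolution and care that each term genuinely pairs \(\Kappa\)-compact objects so that the comparison of mapping spaces is legitimate, and the verification that the image of \(j\) is closed under \(\Kappa\)-small colimits and retracts and hence exhausts the compact objects. The remaining points---lifting \(\cat{C}\) to \(\Alg(\Cat{REX}^{\kappa})\) and invoking \cref{x6wlid} with the cardinal~\(\Kappa\)---are bookkeeping about the enlarged universe.
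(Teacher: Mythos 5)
Your proposal is correct and takes essentially the same approach as the paper: both apply \cref{x6wlid} at the cardinal~\(\Kappa\), via \cref{xq40hr}, to see that \(\LMod_{\cat{C}}(\Cat{REX}^{\kappa})\) is \(\Kappa\)-compactly generated by the free modules on objects of \(\Cat{Rex}^{\kappa}\), and then identify the full subcategory of \(\Kappa\)-compact objects with \(\LMod_{\cat{C}}(\Cat{Rex}^{\kappa})\). The paper compresses the points you elaborate (full faithfulness via the bar resolution, and closure of the image under \(\Kappa\)-small colimits and retracts) into its single closing sentence ``This is exactly \(\LMod_{\cat{C}}(\Cat{Rex}^{\kappa})\),'' so your write-up is a more detailed rendering of the same proof.
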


\begin{proof}
  By applying \cref{x6wlid}
  to \cref{xq40hr},
  we see that
  the target is \(\Kappa\)-compactly generated.
  Moreover,
  by the proof of \cref{x6wlid},
  we see that the full subcategory
  of \(\Kappa\)-compact objects
  is generated by \(\cat{C}\otimes\cat{D}\),
  where \(\cat{D}\) is in~\(\Cat{Rex}^{\kappa}\).
  This is exactly \(\LMod_{\cat{C}}(\Cat{Rex}^{\kappa})\).
\end{proof}

\begin{lemma}\label{xfxio0}
  Suppose that we have a diagram
  \((\cat{C}(\kappa))_{\kappa}\) in \(\Alg(\Cat{Pr})\)
  indexed by small regular cardinals
  (or a cofinal family of them)
  such that \(\cat{C}(\kappa)\) belongs to \(\Alg(\Cat{Pr}^{\kappa})\).
  We write \(\cat{C}\) for the colimit taken in \(\Cat{REX}^{\Kappa}\).
  Then we have a canonical equivalence
  \begin{equation*}
    \injlim_{\kappa<\Kappa}\LMod_{\cat{C}(\kappa)}(\Cat{Pr}^{\kappa})
    \simeq
    (\LMod_{\cat{C}}(\Cat{REX}^{\Kappa}))_{\Kappa}
  \end{equation*}
  in \(\Cat{REX}^{\Kappa}\)
  (or equivalently, in \(\Cat{CAT}\),
  by \cref{xgvgzm}).
\end{lemma}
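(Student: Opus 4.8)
\textit{Proof plan.} The plan is to transport the whole statement through the enlargement by means of $\IND_{\Kappa}$, reducing it to a colimit formula for module categories in which \emph{both} the ambient monoidal category and the algebra vary. First I would replace the indexing family by its cofinal subfamily of uncountable regular cardinals, so that \cref{xuizju} gives $\Cat{Pr}^{\kappa}\simeq\Cat{Rex}^{\kappa}=\Cat{Rex}^{\kappa}_{\ic}$ via $\Ind_{\kappa}$ and hence $\LMod_{\cat{C}(\kappa)}(\Cat{Pr}^{\kappa})\simeq\LMod_{\cat{C}(\kappa)}(\Cat{Rex}^{\kappa})$. Using \cref{xq40hr} to regard $\cat{C}(\kappa)\in\Alg(\Cat{Rex}^{\kappa})=\Alg((\Cat{REX}^{\kappa})_{\Kappa})\subset\Alg(\Cat{REX}^{\kappa})$, I would package the data as a single filtered diagram $\kappa\mapsto(\Cat{REX}^{\kappa},\cat{C}(\kappa))$ of pairs (monoidal category, algebra). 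Its colimit is $(\Cat{REX}^{\Kappa},\cat{C})$: the base colimit $\injlim_{\kappa}\Cat{REX}^{\kappa}\simeq\Cat{REX}^{\Kappa}$ is \cref{xm4yc7}, promoted to $\CAlg(\Cat{PR}^{\Kappa})$ since the transition functors are symmetric monoidal, and the algebra colimit is the hypothesis that $\cat{C}=\injlim_{\kappa}\cat{C}(\kappa)$ in $\Cat{REX}^{\Kappa}$.

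The key step is the identification $\LMod_{\cat{C}}(\Cat{REX}^{\Kappa})\simeq\injlim_{\kappa<\Kappa}\LMod_{\cat{C}(\kappa)}(\Cat{REX}^{\kappa})$ in $\Cat{PR}^{\Kappa}$, where the transition maps are the extension-of-scalars (base-change) functors. I would deduce this from the assertion that the total functor $(\cat{V},A)\mapsto\LMod_{A}(\cat{V})$ preserves filtered colimits: over a fixed base this is the colimit-compatibility of $\LMod_{(\X)}$ from~\cite{LurieHA}, while change of base along a symmetric monoidal functor is the relative tensor product $\LMod_{A}(\cat{W})\simeq\cat{W}\otimes_{\cat{V}}\LMod_{A}(\cat{V})$, again from~\cite{LurieHA}. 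Since a filtered colimit of pairs is computed by first base-changing every $\LMod_{\cat{C}(\kappa)}(\Cat{REX}^{\kappa})$ into the colimit base $\Cat{REX}^{\Kappa}$ and then taking the colimit of the resulting algebras there, and both operations commute with $\LMod$, the formula follows. This is the step I expect to be the main obstacle, as it is exactly where the two independent colimits—of ambient categories and of algebras—must be reconciled; here one also uses that $\Cat{REX}^{\Kappa}\in\CAlg(\Cat{PR}^{\Kappa})$ (the enlarged \cref{xuizju}) so that $\LMod_{\cat{C}}(\Cat{REX}^{\Kappa})$ is $\Kappa$-compactly generated.

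Granting this, I would apply \cref{xucgor} termwise, rewriting $\LMod_{\cat{C}(\kappa)}(\Cat{REX}^{\kappa})\simeq\IND_{\Kappa}(\LMod_{\cat{C}(\kappa)}(\Cat{Rex}^{\kappa}))\simeq\IND_{\Kappa}(\LMod_{\cat{C}(\kappa)}(\Cat{Pr}^{\kappa}))$, the base-change transition functors restricting on $\Kappa$-compact objects to the structure maps of the diagram defining the left-hand side. Pulling $\IND_{\Kappa}$ out of the colimit—legitimate because $\IND_{\Kappa}\colon\Cat{REX}^{\Kappa}\to\Cat{PR}^{\Kappa}$ is an equivalence by the enlarged \cref{xuizju} (using $\Cat{REX}^{\Kappa}=\Cat{REX}^{\Kappa}_{\ic}$ as $\Kappa>\aleph_{0}$) and hence preserves colimits—yields
\[
  \LMod_{\cat{C}}(\Cat{REX}^{\Kappa})\simeq\IND_{\Kappa}\Bigl(\injlim_{\kappa<\Kappa}\LMod_{\cat{C}(\kappa)}(\Cat{Pr}^{\kappa})\Bigr),
\]
the inner colimit now being formed in $\Cat{REX}^{\Kappa}$.

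Finally I would pass to $\Kappa$-compact objects. Writing $X=\injlim_{\kappa}\LMod_{\cat{C}(\kappa)}(\Cat{Pr}^{\kappa})\in\Cat{REX}^{\Kappa}$, this is a $\Kappa$-filtered colimit of idempotent-complete categories and is therefore itself idempotent complete, so $(\IND_{\Kappa}X)_{\Kappa}\simeq X$. Applying this to the displayed identification gives $(\LMod_{\cat{C}}(\Cat{REX}^{\Kappa}))_{\Kappa}\simeq X$, which is the asserted equivalence in $\Cat{REX}^{\Kappa}$; the parenthetical ``equivalently in $\Cat{CAT}$'' is then immediate from \cref{xgvgzm}, which ensures these colimits are computed the same way after forgetting to $\Cat{CAT}$.
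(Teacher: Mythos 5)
Your proposal is correct and takes essentially the same route as the paper: both apply \cref{xucgor} termwise and reduce everything to the identification \(\injlim_{\kappa<\Kappa}\LMod_{\cat{C}(\kappa)}(\Cat{REX}^{\kappa})\simeq\LMod_{\cat{C}}(\Cat{REX}^{\Kappa})\), split into a base-change step and a fixed-base algebra-colimit step, with the key input \(\injlim_{\kappa<\Kappa}\Cat{REX}^{\kappa}\simeq\Cat{REX}^{\Kappa}\) coming from \cref{xjl7pp} (in your case via \cref{xm4yc7}). The additional details you spell out (the relative tensor product description of base change from Lurie, and idempotent completeness of the \(\Kappa\)-filtered colimit when passing to \(\Kappa\)-compact objects) are explicit versions of steps the paper leaves implicit.
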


\begin{proof}
  By \cref{xucgor},
  it suffices to show that the composite
  \begin{equation*}
    \injlim_{\kappa<\Kappa}\LMod_{\cat{C}(\kappa)}(\Cat{REX}^{\kappa})
    \to\injlim_{\kappa<\Kappa}\LMod_{\cat{C}(\kappa)}(\Cat{REX}^{\Kappa})
    \to\LMod_{\cat{C}}(\Cat{REX}^{\Kappa})
  \end{equation*}
  is an equivalence in \(\Cat{PR}\).
  Therefore, it suffices to observe
  that
  \(\injlim_{\kappa<\Kappa}\Cat{REX}^{\kappa}\to\Cat{REX}^{\Kappa}\)
  is an equivalence in~\(\Cat{PR}\),
  but by passing to \(\Kappa\)-compact objects,
  this follows from \cref{xjl7pp}.
\end{proof}

\begin{proof}[Proof of \cref{xmhmpq}]
  We proceed by induction.
  When \(n=0\), this is tautological.
  We assume \(n\geq1\).
  By applying \cref{xfxio0} to \(((n-1)\Cat{Pr}^{\kappa})_{\kappa}\),
  we see that
  \begin{equation*}
    \injlim_{\kappa<\Kappa}(\Mod_{(n-1)\Cat{Pr}^{\kappa}}(\Cat{REX}^{\kappa}))_{\Kappa}
    \to(\Mod_{(n-1)\Cat{Pr}}(\Cat{REX}^{\Kappa}))_{\Kappa}
  \end{equation*}
  is an equivalence in \(\Cat{CAT}\).
  The source is equivalent to \(n\Cat{Pr}=\injlim_{\kappa<\Kappa}n\Cat{Pr}^{\kappa}\)
  by \cref{xucgor}.
  The target is equivalent to \(\Mod_{\pr}((n-1)\Cat{Pr})\) by definition,
  and to \(n\Cat{Pr}^{\Ste}\) by the inductive hypothesis.
\end{proof}

\section{Limits in presentable \texorpdfstring{\(2\)}{2}-categories}\label{s:ni}

In this section,
we study limits in presentable \(2\)-categories.
We show the existence of limits in \cref{ss:fil}
and prove \cref{adjoint} in \cref{ss:aft} using a similar idea.
Our argument relies on enriched category theory,
which we study in \cref{ss:ecg}.

\subsection{A review of limits of presentable categories}\label{ss:lim_pr}

We recall the existence
of limits in~\(\Cat{Pr}\).
More precisely,
\(\Cat{Pr}^{\kappa}\to\Cat{Pr}^{\lambda}\)
preserves \(\kappa\)-small limits for \(\lambda\geq\kappa>\aleph_{0}\).
This is reduced to the following two statements:

\begin{proposition}\label{prod}
  Let \(\kappa\) be a regular cardinal
  and \(I\) a \(\kappa\)-small set.
  Let \((\cat{C}_{i})_{i\in I}\)
  be an \(I\)-indexed family of \(\kappa\)-compactly generated categories.
  Then the product \(\prod_{i\in I}\cat{C}_{i}\)
  of large categories is
  a product in \(\Cat{Pr}^{\lambda}\)
  for \(\lambda\geq\kappa\).
\end{proposition}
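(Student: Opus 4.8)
The plan is to realize the underlying product $\prod_{i\in I}\cat{C}_i$ as $\Ind_\lambda(\cat{E})$ for an explicit small category $\cat{E}$, deduce that it is $\lambda$-compactly generated with $\lambda$-compact objects exactly the tuples of $\lambda$-compact objects, and then read off the universal property from that of the underlying product among colimit-preserving functors. First I would reduce to the case $\kappa=\lambda$: since $\kappa\leq\lambda$, each $\cat{C}_i$ is also $\lambda$-compactly generated and $I$ is $\lambda$-small, so it is harmless to replace $\kappa$ by $\lambda$ throughout. Colimits and mapping anima in $\prod_i\cat{C}_i$ are computed componentwise; write $p_i$ for the projections and $\iota_i\colon\cat{C}_i\to\prod_j\cat{C}_j$ for the colimit-preserving functor placing an object in slot~$i$ and the initial object elsewhere. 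I also record that being a morphism in $\Cat{Pr}^\lambda$ is a \emph{property} of a colimit-preserving functor, namely preservation of $\lambda$-compact objects, so that $\Map_{\Cat{Pr}^\lambda}$ is a union of connected components of the anima of colimit-preserving functors.

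The key point is that a tuple $(X_i)$ with each $X_i\in(\cat{C}_i)_\lambda$ is $\lambda$-compact: for a $\lambda$-filtered diagram $(W^\alpha_i)_\alpha$,
\[
  \Map\Bigl((X_i),\injlim_\alpha(W^\alpha_i)\Bigr)
  \simeq\prod_i\injlim_\alpha\Map_{\cat{C}_i}(X_i,W^\alpha_i)
  \simeq\injlim_\alpha\prod_i\Map_{\cat{C}_i}(X_i,W^\alpha_i),
\]
where the first equivalence uses componentwise computation together with the $\lambda$-compactness of each $X_i$, and the second is the commutation of $\lambda$-filtered colimits with the $\lambda$-small product $\prod_{i\in I}$ in $\Cat{Ani}$, valid precisely because $I$ is $\lambda$-small. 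Let $\cat{E}=\prod_i(\cat{C}_i)_\lambda$ be the full subcategory of such tuples; it is essentially small, idempotent complete, and closed under $\lambda$-small colimits (all computed componentwise), and consists of $\lambda$-compact objects. Moreover $\cat{E}$ generates under colimits: any $(Y_i)$ decomposes componentwise as $(Y_i)\simeq\bigoplus_i\iota_i(Y_i)$, and each $\iota_i(Y_i)$ is a colimit of objects $\iota_i(X)$ with $X\in(\cat{C}_i)_\lambda$, all of which lie in $\cat{E}$ since the initial object is $\lambda$-compact. As $\cat{E}$ is closed under $\lambda$-small colimits, every object is then a $\lambda$-filtered colimit of objects of $\cat{E}$, so the canonical functor $\Ind_\lambda(\cat{E})\to\prod_i\cat{C}_i$ is fully faithful and essentially surjective, hence an equivalence. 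Thus $\prod_i\cat{C}_i$ is $\lambda$-compactly generated with $\bigl(\prod_i\cat{C}_i\bigr)_\lambda=\cat{E}$.

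It remains to verify the universal property. By the previous step the projections $p_i$ carry $\cat{E}$ into $(\cat{C}_i)_\lambda$, so they are morphisms in $\Cat{Pr}^\lambda$. For $\cat{D}\in\Cat{Pr}^\lambda$, a colimit-preserving functor $F\colon\cat{D}\to\prod_i\cat{C}_i$ is the same datum as a family of colimit-preserving functors $F_i=p_iF\colon\cat{D}\to\cat{C}_i$, since colimits in the product are computed componentwise. I would then check that $F$ preserves $\lambda$-compact objects if and only if each $F_i$ does: the forward direction follows since $p_i$ preserves $\lambda$-compact objects, and the converse since $F(d)\simeq(F_i(d))_i$ is a tuple of $\lambda$-compact objects, hence $\lambda$-compact by the previous step. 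As preservation of $\lambda$-compact objects is a property, this identifies $\Map_{\Cat{Pr}^\lambda}(\cat{D},\prod_i\cat{C}_i)$ with $\prod_i\Map_{\Cat{Pr}^\lambda}(\cat{D},\cat{C}_i)$, which is the desired universal property. I expect the main obstacle to be the compact-object bookkeeping of the second paragraph, and in particular the interchange of $\lambda$-filtered colimits with the $\lambda$-small product: this is exactly the step forcing the hypotheses that $I$ is $\kappa$-small and $\lambda\geq\kappa$, and once this commutation and the resulting description of the $\lambda$-compact objects are in place, the rest is formal.
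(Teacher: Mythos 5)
Your proposal is correct and follows essentially the same route as the paper: both identify \(\prod_{i\in I}\cat{C}_{i}\) with \(\Ind_{\lambda}\) of the product of the subcategories of compact objects, the key step in each case being that tuples of \(\lambda\)-compact objects are \(\lambda\)-compact because \(\lambda\)-small products commute with \(\lambda\)-filtered colimits in \(\Cat{Ani}\). The only differences are cosmetic: the paper proves generation by writing an arbitrary object as a colimit over the (\(\kappa\)-directed) product of \(\kappa\)-directed posets rather than via your coproduct-of-skyscrapers decomposition, and it leaves implicit the reduction to \(\kappa=\lambda\) and the final universal-property bookkeeping, which you spell out.
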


\begin{proposition}\label{pb}
  Let \(\kappa\) be a regular cardinal
  and
  \begin{equation*}
    \begin{tikzcd}
      \cat{C}\ar[r]\ar[d]&
      \cat{D}\ar[d]\\
      \cat{C}'\ar[r]&
      \cat{D}'
    \end{tikzcd}
  \end{equation*}
  a cartesian diagram of large categories
  such that its subdiagram \(\cat{C}'\to\cat{D}'\gets\cat{D}\) is in~\(\Cat{Pr}^{\kappa}\).
  Then this is a cartesian diagram in~\(\Cat{Pr}^{\lambda}\)
  for \(\lambda\geq\max(\aleph_{1},\kappa)\).
\end{proposition}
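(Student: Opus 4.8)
The plan is to leverage the known behaviour of limits in \(\Cat{Pr}\) — the \(\infty\)-category of presentable categories and colimit-preserving functors, which is precisely \(\injlim_{\kappa}\Cat{Pr}^{\kappa}\), since every left adjoint preserves \(\kappa\)-compact objects for \(\kappa\gg0\) — and then to carry out the bookkeeping needed to descend to \(\Cat{Pr}^{\lambda}\). By \cite[Proposition~5.5.3.13]{LurieHTT}, the inclusion of \(\Cat{Pr}\) into large categories preserves limits; hence the pullback \(\cat{C}\) of large categories is presentable, the projections \(\cat{C}\to\cat{C}'\) and \(\cat{C}\to\cat{D}\) preserve colimits, and the square is \emph{already} cartesian in \(\Cat{Pr}\). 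It therefore remains to prove two things: (a) that \(\cat{C}\) is \(\lambda\)-compactly generated and the two projections preserve \(\lambda\)-compact objects, and (b) that cartesianness upgrades from \(\Cat{Pr}\) to \(\Cat{Pr}^{\lambda}\). I expect (b) to be formal once (a) is in hand, so the real content lies in (a).

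First I would record that the two legs \(f\colon\cat{C}'\to\cat{D}'\) and \(g\colon\cat{D}\to\cat{D}'\) preserve \(\lambda\)-compact objects: each is a morphism in \(\Cat{Pr}^{\kappa}\), so its right adjoint preserves \(\kappa\)-filtered colimits and hence, as \(\lambda\geq\kappa\), also \(\lambda\)-filtered colimits, which is the same as saying the functor preserves \(\lambda\)-compact objects. Next I would exhibit an explicit class of \(\lambda\)-compact objects of \(\cat{C}\). Writing a typical object as a triple \((x',y,\beta)\) with \(\beta\colon f x'\simeq g y\), the functor it corepresents is the pullback of the functors corepresented by \(x'\) in \(\cat{C}'\), by \(y\) in \(\cat{D}\), over the one corepresented by \(f x'\) in \(\cat{D}'\). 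Since finite limits commute with filtered colimits in \(\Cat{Ani}\), and since \(f x'\) is \(\lambda\)-compact whenever \(x'\) is, every triple with \(x'\in\cat{C}'_{\lambda}\) and \(y\in\cat{D}_{\lambda}\) is \(\lambda\)-compact in \(\cat{C}\). I would emphasize that this direction works already for \(\lambda=\kappa\); the cardinal \(\aleph_{1}\) plays no role yet.

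The crux, and the source of the bound \(\lambda\geq\max(\aleph_{1},\kappa)\), is to show that these triples generate \(\cat{C}\) under \(\lambda\)-filtered colimits. Given an arbitrary \((X',Y,B)\), I would present \(X'\) and \(Y\) as \(\kappa\)-filtered colimits of \(\kappa\)-compact objects and try to assemble compatible compact approximations on both sides. The obstruction is that \(B\) matches the two presentations only in the colimit: a map out of a \(\kappa\)-compact approximation \(a'\) of \(X'\) factors as \(f a'\to g b\) through the presentation of \(Y\), but this is merely a map, not an equivalence. I would repair this by a back-and-forth of length \(\omega\), alternately enlarging the \(\cat{C}'\)-side and the \(\cat{D}\)-side to absorb the successive comparison maps; the two resulting colimits are countable colimits of \(\kappa\)-compact objects, hence \(\lambda\)-compact because \(\lambda\geq\max(\aleph_{1},\kappa)\), and they carry mutually inverse maps assembling to an equivalence compatible with \(B\). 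The category of such compatible approximations is \(\lambda\)-filtered, since regularity of \(\lambda\) absorbs \(\lambda\)-small amalgamations and \(\lambda\geq\aleph_{1}\) absorbs the extra countable zig-zag needed to repair equivalences, and its colimit recovers \((X',Y,B)\). This is the step I expect to be the main obstacle: the \(\omega\)-indexed zig-zag required to manufacture an equivalence inside a filtered colimit is exactly why \(\aleph_{1}\) enters and why \(\kappa\) alone does not suffice. Once generation is established, \(\cat{C}\) is \(\lambda\)-compactly generated, its \(\lambda\)-compact objects are precisely the triples with \(\lambda\)-compact legs, and since \(\cat{C}_{\lambda}\) is the closure of the generators under \(\lambda\)-small colimits and retracts, the colimit-preserving projections preserve \(\lambda\)-compact objects.

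Finally I would deduce (b). For \(\cat{E}\in\Cat{Pr}^{\lambda}\), the space \(\Map_{\Cat{Pr}^{\lambda}}(\cat{E},\X)\) is the union of those components of \(\Map_{\Cat{Pr}}(\cat{E},\X)\) spanned by the \(\lambda\)-compact-preserving functors. Restricting the cartesian equivalence already available in \(\Cat{Pr}\) to these components, I would note that a functor \(\cat{E}\to\cat{C}\) preserves \(\lambda\)-compact objects if and only if both its projections do: the ``only if'' uses that the projections preserve \(\lambda\)-compacts, and the ``if'' applies the explicit description of \(\lambda\)-compact triples to \(e\mapsto(u(e),w(e),\beta_{e})\). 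This matches the two sides componentwise and yields the cartesian square in \(\Cat{Pr}^{\lambda}\).
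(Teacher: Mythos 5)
Your proposal is correct and, at the structural level, it is the same proof as the paper's: both take \(\cat{C}_{0}=\cat{C}'_{\lambda}\times_{\cat{D}'_{\lambda}}\cat{D}_{\lambda}\) as the candidate generators, note that these triples are \(\lambda\)-compact in \(\cat{C}\) because finite limits commute with filtered colimits in \(\Cat{Ani}\), reduce everything to the claim that \(\cat{C}_{0}\) generates \(\cat{C}\) under \(\lambda\)-filtered colimits, and descend cartesianness from \(\Cat{Pr}\) (where it holds by \cite[Proposition~5.5.3.13]{LurieHTT}) to \(\Cat{Pr}^{\lambda}\) by the componentwise mapping-space argument you give. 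The genuine difference is how the generation step is handled: the paper forms the cartesian square of slice categories \((\cat{C}_{0})_{/C}\simeq(\cat{C}'_{\lambda})_{/C'}\times_{(\cat{D}'_{\lambda})_{/D'}}(\cat{D}_{\lambda})_{/D}\) and quotes \cite[Lemma~5.4.5.6]{LurieHTT} for the cofinality of the two projections, which immediately identifies the colimit of the tautological diagram \((\cat{C}_{0})_{/C}\to\cat{C}\) with \(C\); you instead re-derive the content of that lemma by hand via the back-and-forth of length \(\omega\). Your interleaving is exactly the argument underlying Lurie's lemma, and it has the virtue of making visible why \(\aleph_{1}\) enters (a countable chain of \(\kappa\)-compact objects has \(\lambda\)-compact colimit only once \(\lambda\geq\aleph_{1}\)); the cost is that this is precisely the step requiring real \(\infty\)-categorical care --- at each stage one must choose a factorization together with a homotopy over \(B\) compatible with all previous choices and assemble these into a coherent sequential diagram --- so a finished write-up would either carry out that coherence or simply cite the lemma, as the paper does. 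One simplification available to you: the \(\lambda\)-filteredness of your category of compatible approximations needs neither the zig-zag nor \(\aleph_{1}\), since \(\cat{C}_{0}\) is closed under \(\lambda\)-small colimits in \(\cat{C}\) (colimits in the pullback are computed componentwise), so \((\cat{C}_{0})_{/C}\) admits \(\lambda\)-small colimits and is therefore \(\lambda\)-filtered; the interleaving is needed only to show that the colimit over it recovers \(C\), i.e., for the cofinality of the two projections onto \((\cat{C}'_{\lambda})_{/C'}\) and \((\cat{D}_{\lambda})_{/D}\).
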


\begin{remark}\label{xdjg80}
  The cardinality bounds in \cref{prod,pb}
  are optimal;
  see~\cite{Henry}.
\end{remark}

\begin{proof}[Proof of \cref{prod}]
  It suffices to show that
  the canonical functor
  \begin{equation*}
    \Ind_{\kappa}\biggl(\prod_{i\in I}(\cat{C}_{i})_{\kappa}\biggr)
    \to
    \prod_{i\in I}\cat{C}_{i}
  \end{equation*}
  is an equivalence.
  Assume that
  \(C_{i}\) is a \(\kappa\)-compact object of \(\cat{C}_{i}\).
  We prove that
  \((C_{i})_{i\in I}\) is \(\kappa\)-compact in \(\prod_{i\in I}\cat{C}_{i}\).
  Let \(P\) be a \(\kappa\)-directed poset
  and \(D\colon P\to\prod_{i\in I}\cat{C}_{i}\) be the diagram.
  Then we wish to show that the composite
  \begin{equation*}
    \injlim_{p\in P}\prod_{i\in I}
    \Map(C_{i},D(p)_{i})
    \to
    \prod_{i\in I}\injlim_{p\in P}
    \Map(C_{i},D(p)_{i})
    \to
    \prod_{i\in I}
    \Map\biggl(C_{i},\injlim_{p\in P}D(p)_{i}\biggr)
  \end{equation*}
  is an equivalence.
  The first map is an equivalence
  since \(\kappa\)-small limits and \(\kappa\)-filtered colimits
  commute in \(\Cat{Ani}\).
  The second map is an equivalence
  since \(C_{i}\) is \(\kappa\)-compact in \(\cat{C}_{i}\).

  We then show that the image generates the target category.
  Let \((C_{i})_{i\in I}\) be a general object
  of the target.
  Then \(C_{i}\)
  is the colimit of \(D_{i}\colon P_{i}\to\cat{C}_{i}\)
  such that \(P_{i}\) is a \(\kappa\)-directed poset.
  Then the original object is the colimit of
  \begin{equation*}
    D=
    \prod_{i\in I}D_{i}\colon
    \prod_{i\in I}P_{i}\to
    \prod_{i\in I}\cat{C}_{i}.
  \end{equation*}
  Note that \(\prod_{i\in I}P_{i}\) is \(\kappa\)-directed.
\end{proof}

\begin{proof}[Proof of \cref{pb}]
  We write
  \(\cat{C}_{0}=\cat{C}'_{\lambda}\times_{\cat{D}'_{\lambda}}\cat{D}_{\lambda}\).
  It suffices to show that
  \(\cat{C}_{0}\) generates
  \(\cat{C}\) under (\(\lambda\)-filtered) colimits.
  Let \(C\) be an arbitrary object.
  We form the diagram
  \begin{equation*}
    \begin{tikzcd}
      (\cat{C}_{\lambda})_{/C}\ar[r]\ar[d]&
      (\cat{D}_{\lambda})_{/D}\ar[d]\\
      (\cat{C}'_{\lambda})_{/C'}\ar[r]&
      (\cat{D}'_{\lambda})_{/D'}\rlap,
    \end{tikzcd}
  \end{equation*}
  which is again cartesian.
  It suffices to show that
  the top and left arrows are cofinal.
  This follows from~\cite[Lemma~5.4.5.6]{LurieHTT}.
\end{proof}

\subsection{Enriched \texorpdfstring{\(\Ind\)}{Ind}-categories}\label{ss:ecg}

We fix an object~\(\cat{V}\) of \(\CAlg(\Cat{Pr})\)\footnote{What we prove here might work in the associative case,
  but the commutative case is sufficient
  for our purposes.
} as the base of enrichment.
We take a regular cardinal~\(\kappa\)
such that \(\cat{V}\) belongs to \(\CAlg(\Cat{Pr}^{\kappa})\).
We use the machinery of enriched category theory
to study objects of \(\Mod_{\cat{V}}(\Cat{Pr}^{\lambda})\)
for \(\lambda\geq\kappa\).
Our reference for enriched category theory is~\cite{Heine23}.
The following is~\cite[Theorem~1.3]{Heine23}:

\begin{theorem}[Heine]\label{x1gpof}
  The symmetric monoidal category
  \(\Mod_{\cat{V}}(\Cat{Pr})\)
  is equivalent
  to \(\Cat{Pr}^{\cat{V}}\),
  the category
  of tensored \(\cat{V}\)-categories
  with presentable underlying categories
  and left adjoint \(\cat{V}\)-functors.
\end{theorem}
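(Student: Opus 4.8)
The plan is to construct the equivalence explicitly from the action-versus-enrichment dictionary and then upgrade it to a symmetric monoidal one, invoking the enriched-categorical machinery of~\cite{Heine23} for the coherence. First I would start from a left \(\cat{V}\)-module \(\cat{C}\) in \(\Cat{Pr}\). Its action is a morphism \(\cat{V}\otimes\cat{C}\to\cat{C}\) in \(\Cat{Pr}\), hence corresponds to a functor \(\cat{V}\times\cat{C}\to\cat{C}\) preserving colimits separately in each variable. Fixing \(X\in\cat{C}\), the functor \((\X)\otimes X\colon\cat{V}\to\cat{C}\) is a colimit-preserving functor between presentable categories and thus admits a right adjoint by the adjoint functor theorem; writing \(\underline{\Hom}_{\cat{C}}(X,Y)\in\cat{V}\) for its value on \(Y\), we obtain the defining adjunction
\[
  \Map_{\cat{V}}(V,\underline{\Hom}_{\cat{C}}(X,Y))
  \simeq
  \Map_{\cat{C}}(V\otimes X,Y).
\]
These morphism objects should assemble into a tensored \(\cat{V}\)-enrichment of \(\cat{C}\) whose tensoring is the original action and whose underlying category is presentable, and a left-adjoint module map becomes a left-adjoint \(\cat{V}\)-functor. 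Conversely, a tensored \(\cat{V}\)-category with presentable underlying category has a tensoring cocontinuous in each variable, which is exactly the data of a module action in \(\Cat{Pr}\). The two passages should be mutually inverse.

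The main obstacle is coherence: the assignment \(X,Y\mapsto\underline{\Hom}_{\cat{C}}(X,Y)\) must be promoted from a family of objects and mapping-space equivalences to a genuine \(\cat{V}\)-enriched category, and the two constructions must be exhibited as inverse equivalences of \(\infty\)-categories, not merely as bijections on objects. This is precisely the content of~\cite{Heine23}, which establishes an equivalence between \(\cat{V}\)-enriched \(\infty\)-categories and \(\infty\)-categories equipped with a (weak) \(\cat{V}\)-action. I would therefore invoke that equivalence and then restrict it along the two additional conditions at play here: on the module side one imposes that the action lives in \(\Cat{Pr}\) (so the underlying category is presentable and the action is cocontinuous), and on the enriched side one imposes tensoredness together with presentability of the underlying category. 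The key point to verify is that a \emph{strong} action in \(\Cat{Pr}\) and a \emph{tensored} presentable enrichment cut out the same essential image of Heine's comparison, with left-adjoint \(\cat{V}\)-functors matching colimit-preserving module maps.

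Finally, I would upgrade to a symmetric monoidal equivalence. Since \(\cat{V}\in\CAlg(\Cat{Pr})\), the category \(\Mod_{\cat{V}}(\Cat{Pr})\) carries the relative tensor product \(\otimes_{\cat{V}}\), and \(\Cat{Pr}^{\cat{V}}\) carries the corresponding tensor product of tensored \(\cat{V}\)-categories. It suffices to check that the equivalence intertwines these, which reduces to matching universal properties: the relative tensor product of modules is characterized by the universal cocontinuous \(\cat{V}\)-bilinear map, while the tensor on the enriched side is characterized by the analogous bilinearity of the tensoring. Since both constructions above translate one such universal property into the other, the equivalence is symmetric monoidal. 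The genuinely hard part is the homotopy-coherent identification of the second paragraph, which I would delegate to~\cite{Heine23}; the module-theoretic and monoidal bookkeeping in the first and third paragraphs are comparatively routine once that framework is in place.
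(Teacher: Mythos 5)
The paper offers no independent proof of this statement: it is quoted verbatim as \cite[Theorem~1.3]{Heine23}, so the entire content is delegated to Heine. Your proposal is essentially the same approach, since its one substantive step is invoking Heine's action--enrichment equivalence, and the surrounding bookkeeping you describe (constructing the enrichment via the adjoint functor theorem, matching essential images, and the symmetric monoidal upgrade via universal properties of the relative tensor product) is exactly the content already packaged into the cited theorem.
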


\begin{definition}\label{xwh2oi}
  Let \(\cat{A}\) be a \(\cat{V}\)-category
  admitting conical \(\lambda\)-small colimits and
  \(\lambda\)-compact tensors,
  i.e.,
  tensors
  with \(\lambda\)-compact objects in~\(\cat{V}\).
  Let \(\cat{C}\) be a presentable \(\cat{V}\)-category.
  We write
  \begin{equation*}
    \Fun^{\cat{V}}_{\lex{\lambda}}(\cat{A}^{\op},\cat{C})
    \subset
    \Fun^{\cat{V}}(\cat{A}^{\op},\cat{C})
    =
    \PShv^{\cat{V}}(\cat{A};\cat{C})
  \end{equation*}
  for the full \(\cat{V}\)-subcategory spanned by
  \(\cat{V}\)-functors
  preserving
  conical \(\lambda\)-small limits
  and \(\lambda\)-compact powers.
\end{definition}

\begin{example}\label{x2jvul}
  Representable functors
  belong to \(\Fun^{\cat{V}}_{\lex{\lambda}}(\cat{A}^{\op},\cat{V})\).
\end{example}

\begin{proposition}\label{x8ypbm}
  Let \(\cat{C}\) 
  be a presentable \(\cat{V}\)-category
  whose corresponding \(\cat{V}\)-module
  is in \(\Mod_{\cat{V}}(\Cat{Pr}^{\lambda})\).
  We write \(\cat{C}_{\lambda}\)
  for the full \(\cat{V}\)-subcategory
  spanned by \(\lambda\)-compact objects of
  its underlying category.
  Then \(\cat{C}\)
  is equivalent
  to \(\Fun^{\cat{V}}_{\lex{\lambda}}((\cat{C}_{\lambda})^{\op},\cat{V})\).
\end{proposition}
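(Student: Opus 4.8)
The plan is to realize the comparison as a restricted enriched Yoneda embedding and to recognize \(\Fun^{\cat{V}}_{\lex{\lambda}}((\cat{C}_{\lambda})^{\op},\cat{V})\) as the enriched \(\lambda\)-\(\Ind\)-completion of \(\cat{C}_{\lambda}\). First I would record the preliminary that \(\cat{C}_{\lambda}\) is the right kind of source: since \(\cat{C}\) lies in \(\Mod_{\cat{V}}(\Cat{Pr}^{\lambda})\), the \(\cat{V}\)-action and colimits preserve \(\lambda\)-compact objects, so \(\cat{C}_{\lambda}\) is closed under conical \(\lambda\)-small colimits and under tensoring by \(\lambda\)-compact objects of \(\cat{V}\). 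Thus \(\cat{C}_{\lambda}\) is a \(\cat{V}\)-category of the type appearing in \cref{xwh2oi}, and the target \(\Fun^{\cat{V}}_{\lex{\lambda}}((\cat{C}_{\lambda})^{\op},\cat{V})\) is defined.

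Next I would construct the comparison \(\cat{V}\)-functor as the restricted enriched Yoneda embedding \(j\colon\cat{C}\to\PShv^{\cat{V}}(\cat{C}_{\lambda};\cat{V})\), \(C\mapsto\underline{\Hom}_{\cat{C}}(\X,C)\rvert_{\cat{C}_{\lambda}}\). Because \(\underline{\Hom}_{\cat{C}}(\X,C)\) carries conical \(\lambda\)-small colimits to conical \(\lambda\)-small limits and tensors by \(\lambda\)-compact objects to the corresponding powers, \(j\) factors through the full \(\cat{V}\)-subcategory \(\cat{D}:=\Fun^{\cat{V}}_{\lex{\lambda}}((\cat{C}_{\lambda})^{\op},\cat{V})\) (compare \cref{x2jvul}); write \(\bar j\colon\cat{C}\to\cat{D}\). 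Via \cref{x1gpof} we regard \(\cat{C}\) as a tensored presentable \(\cat{V}\)-category, and it then suffices to prove that \(\bar j\) is an equivalence of \(\cat{V}\)-categories, which I would do by checking it is fully faithful on enriched mapping objects and essentially surjective.

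For full faithfulness I would use the enriched Yoneda lemma: for \(A\in\cat{C}_{\lambda}\) the object \(\bar j(A)\) is the representable \(y(A)\), and \(\underline{\Hom}_{\cat{D}}(y(A),\bar j(C))\simeq\bar j(C)(A)=\underline{\Hom}_{\cat{C}}(A,C)\). Since \(\cat{C}\) is \(\lambda\)-compactly generated, every object is a conical \(\lambda\)-filtered colimit of objects of \(\cat{C}_{\lambda}\); as both \(\underline{\Hom}_{\cat{C}}(A,\X)\) (for \(A\) \(\lambda\)-compact) and \(\bar j\) preserve such colimits—the latter because \(\lambda\)-filtered colimits in \(\cat{D}\) are computed pointwise, using that conical \(\lambda\)-small limits and \(\lambda\)-compact powers commute with \(\lambda\)-filtered colimits in \(\cat{V}\)—full faithfulness reduces to the displayed Yoneda computation, first in the source and then in the target variable. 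For essential surjectivity I would present an arbitrary \(F\in\cat{D}\) as the colimit of its canonical diagram of representables; the hypothesis that \(F\) preserves conical \(\lambda\)-small limits and \(\lambda\)-compact powers is exactly what forces this diagram to be conically \(\lambda\)-filtered, so \(F\simeq\bar j(\injlim_{i}A_{i})\) with \(\injlim_{i}A_{i}\in\cat{C}\).

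The main obstacle I anticipate is the essential-surjectivity step: establishing the enriched density/recognition statement that the objects of \(\Fun^{\cat{V}}_{\lex{\lambda}}((\cat{C}_{\lambda})^{\op},\cat{V})\) are precisely the conical \(\lambda\)-filtered colimits of representables. This is the enriched analogue of the classical identification of \(\Ind_{\lambda}(\cat{A})\) with the \(\lambda\)-small-limit-preserving presheaves, and the delicate new input is the ``\(\lambda\)-compact powers'' clause, which must be matched with closure under tensoring in \(\cat{C}_{\lambda}\). Making this precise requires the commutation of \(\lambda\)-filtered colimits with conical \(\lambda\)-small limits and with \(\lambda\)-compact powers in \(\cat{V}\), together with a clean enriched density theorem; I would aim to extract both from the enriched framework underlying \cref{x1gpof} rather than reprove them by hand.
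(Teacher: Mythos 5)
Your construction is, in substance, the paper's own: your \(\bar j\) is exactly the right adjoint \(R\) of the functor \(L\colon\PShv^{\cat{V}}(\cat{C}_{\lambda})\to\cat{C}\) that the paper obtains from Heine's universal property of enriched presheaves, your factorization of \(\bar j\) through \(\Fun^{\cat{V}}_{\lex{\lambda}}((\cat{C}_{\lambda})^{\op},\cat{V})\) matches the paper's use of \cref{x2jvul} and \cref{xybys9}, and your full-faithfulness step (write the source as a conical \(\lambda\)-filtered colimit of \(\lambda\)-compacts, use that \(\bar j\) preserves such colimits, then apply enriched Yoneda) is the paper's proof that \(L(R(C))\to C\) is an equivalence, in unpacked form. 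The genuine gap is in essential surjectivity---precisely the step you flag as the ``main obstacle'' and then defer to an unnamed enriched density theorem. Two things go wrong there. First, the claim that preservation of conical \(\lambda\)-small limits and \(\lambda\)-compact powers ``is exactly what forces this diagram to be conically \(\lambda\)-filtered'' misattributes the hypotheses: \(\lambda\)-filteredness of \((\cat{C}_{\lambda})_{/F}\) uses only the conical-limit hypothesis, and filteredness alone does not yield \(F\simeq\bar j(\injlim_{i}A_{i})\). What it yields is that the conical colimit \(F'\) of the tautological diagram of representables lies in the image of \(\bar j\); the entire content of the step is that the canonical map \(F'\to F\) is an equivalence. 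In the enriched setting this is \emph{not} the formal co-Yoneda lemma: co-Yoneda exhibits \(F\) as a \emph{weighted} colimit of representables, while the conical colimit over the unenriched slice only recovers the underlying \(\Cat{Ani}\)-valued presheaf of \(F\) (using that \(\unit\) is \(\kappa\)-compact). Your argument never invokes the power hypothesis in this comparison, so as written it would prove that \emph{every} conical-limit-preserving enriched presheaf is a restricted Yoneda image---a stronger statement that is false in general: when \(\cat{V}\) is not \(\Cat{Ani}\) (e.g.\ \(\cat{V}=\Cat{Pr}^{\kappa}\), the case the paper needs), powers by \(\lambda\)-compact objects are genuinely extra structure, and a presheaf failing to preserve them cannot lie in the image, since representables do preserve them.

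The paper closes exactly this step by a direct computation rather than by citation, and this is what your writeup is missing. For \(\kappa\)-compact \(V\in\cat{V}\) and \(A\in\cat{C}_{\lambda}\), one computes \(\Map(V,F'(A))\) as a \(\lambda\)-filtered colimit of mapping animas using the \(\kappa\)-compactness of \(V\); the tensor--hom adjunction moves \(V\) onto the evaluation variable, producing mapping animas out of objects that again lie in \(\cat{C}_{\lambda}\) because the \(\cat{V}\)-action preserves \(\lambda\)-compacts; the \emph{unenriched} co-Yoneda lemma for the underlying presheaf of \(F\) identifies the resulting colimit with \(\Map(\unit,F(\X))\) evaluated there; and finally the hypothesis that \(F\) preserves \(\lambda\)-compact powers converts this into \(\Map(V,F(A))\). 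Since the \(\kappa\)-compact objects generate \(\cat{V}\), this shows \(F'\to F\) is an equivalence. To complete your proposal you must either supply this computation or cite a precise enriched \(\lambda\)-accessibility recognition theorem; \cref{x1gpof} and Heine's Theorem~5.1 do not provide one---the paper uses them only to construct \(L\).
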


\begin{lemma}\label{xybys9}
Let \(\lambda\geq\kappa\) be a regular cardinal.
  In~\(\cat{V}\),
  (conical) \(\lambda\)-filtered colimits
  commute with (conical) \(\lambda\)-small limits
  and \(\lambda\)-compact powers.
\end{lemma}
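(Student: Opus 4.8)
The plan is to treat the two commutation statements separately, after recording that since \(\cat{V}\) carries its canonical self-enrichment, the power of \(w\in\cat{V}\) by an object \(v\in\cat{V}\) is the internal hom \(\underline{\Hom}_{\cat{V}}(v,w)\), whose defining adjunction is \((\X)\otimes v\dashv\underline{\Hom}_{\cat{V}}(v,\X)\). Note also that \(\cat{V}\) is \(\lambda\)-compactly generated, as \(\lambda\geq\kappa\), so the objects in question make sense.

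For the conical part, I would first use that \(\cat{V}\), being \(\kappa\)-compactly generated, embeds fully faithfully via \(X\mapsto\Map_{\cat{V}}(\X,X)\) into the presheaf category \(\PShv(\cat{V}_{\kappa})=\Fun(\cat{V}_{\kappa}^{\op},\Cat{Ani})\), and that this embedding preserves all limits and all \(\kappa\)-filtered (hence, since \(\lambda\geq\kappa\), all \(\lambda\)-filtered) colimits. Because limits and colimits in the presheaf category are computed pointwise in \(\Cat{Ani}\), the comparison map for a \(\lambda\)-filtered colimit of \(\lambda\)-small limits in \(\cat{V}\) is carried by this fully faithful, conservative functor to the analogous comparison map in \(\PShv(\cat{V}_{\kappa})\), which is an equivalence by the corresponding well-known fact in \(\Cat{Ani}\) (\cite[Proposition~5.3.3.3]{LurieHTT}). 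Conservativity then gives the claim in \(\cat{V}\).

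For the power part, using the adjunction above together with the criterion that a right adjoint between presentable categories preserves \(\lambda\)-filtered colimits if and only if its left adjoint preserves \(\lambda\)-compact objects (\cite[Proposition~5.5.7.2]{LurieHTT}), I would reduce the commutation of \(\underline{\Hom}_{\cat{V}}(v,\X)\) (for \(v\) a \(\lambda\)-compact object) with \(\lambda\)-filtered colimits to the statement that \((\X)\otimes v\) preserves \(\lambda\)-compact objects. Thus everything comes down to the key step: the full subcategory \(\cat{V}_{\lambda}\) is closed under the tensor product, i.e.\ \(\cat{V}\in\CAlg(\Cat{Pr}^{\lambda})\).

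This tensor-closure is the main obstacle, and I expect to prove it from the hypothesis \(\cat{V}\in\CAlg(\Cat{Pr}^{\kappa})\) by propagation across cardinals. Recalling that \(\cat{V}_{\lambda}\) is the smallest full subcategory containing \(\cat{V}_{\kappa}\) and closed under \(\lambda\)-small colimits and retracts (\cite[Section~5.4.2]{LurieHTT}), I would argue in two steps. First, fixing \(w\in\cat{V}_{\kappa}\), the full subcategory \(\{v:v\otimes w\in\cat{V}_{\lambda}\}\) contains \(\cat{V}_{\kappa}\) (as \(\otimes\) preserves \(\kappa\)-compact objects) and is closed under \(\lambda\)-small colimits and retracts (as \((\X)\otimes w\) preserves colimits and retracts, and \(\cat{V}_{\lambda}\) is itself closed under \(\lambda\)-small colimits and retracts); hence it contains \(\cat{V}_{\lambda}\). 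Then, fixing \(v\in\cat{V}_{\lambda}\), the full subcategory \(\{w:v\otimes w\in\cat{V}_{\lambda}\}\) contains \(\cat{V}_{\kappa}\) by the previous step (using symmetry of \(\otimes\)) and is again closed under \(\lambda\)-small colimits and retracts, so it too contains \(\cat{V}_{\lambda}\). This yields \(v\otimes w\in\cat{V}_{\lambda}\) for all \(v,w\in\cat{V}_{\lambda}\), completing the argument.
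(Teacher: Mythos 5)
Your proposal is correct, but it takes a genuinely different route on the key point. The paper handles the conical part exactly as you do in spirit (it simply cites the commutation of \(\lambda\)-filtered colimits with \(\lambda\)-small limits as a standard unenriched fact, which your presheaf-embedding argument proves), and then establishes the power statement by a direct computation: it tests the comparison map \(\injlim_{i}W_{i}^{V}\to(\injlim_{i}W_{i})^{V}\) against a \(\kappa\)-compact object \(V'\), applies the tensor--power adjunction twice, and uses the \(\lambda\)-compactness of \(V\otimes V'\); joint conservativity of the functors \(\Map(V',\X)\) for \(V'\in\cat{V}_{\kappa}\) then concludes. You instead reduce, via \cite[Proposition~5.5.7.2]{LurieHTT}, to showing that \((\X)\otimes V\) preserves \(\lambda\)-compact objects, and deduce this from the stronger claim that \(\cat{V}_{\lambda}\) is closed under the tensor product, i.e.\ \(\cat{V}\in\CAlg(\Cat{Pr}^{\lambda})\), proved by a two-step generation argument from the description of \(\cat{V}_{\lambda}\) as the closure of \(\cat{V}_{\kappa}\) under \(\lambda\)-small colimits and retracts. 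The two proofs ultimately rest on the same compactness input: the \(\lambda\)-compactness of \(V\otimes V'\) that the paper asserts without comment is precisely the special case of your closure lemma in which one factor is \(\kappa\)-compact, and justifying it requires the same retract-of-\(\lambda\)-small-colimit description you invoke. What your route buys is modularity: the intermediate statement \(\cat{V}\in\CAlg(\Cat{Pr}^{\lambda})\) is of independent interest, and the paper's setup for this subsection (studying \(\Mod_{\cat{V}}(\Cat{Pr}^{\lambda})\) for \(\lambda\geq\kappa\)) tacitly presupposes it. What the paper's route buys is brevity: a single chain of equivalences, with no auxiliary lemma and no appeal to the adjoint-functor criterion.
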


\begin{proof}
The commutation with \(\lambda\)-small limits
  is a standard fact in unenriched category theory.
  We observe the commutation with \(\lambda\)-compact powers.
  Let \(V\) be a \(\lambda\)-compact object
  and \((W_{i})_{i}\) a \(\lambda\)-filtered diagram in~\(\cat{V}\).
  Consider a \(\kappa\)-compact object~\(V'\).
  The result follows from
  \begin{equation*}
    \begin{split}
      \Map\biggl(V',\injlim_{i}W_{i}^{V}\biggr)
      &\simeq
      \injlim_{i}\Map\bigl(V',W_{i}^{V}\bigr)
      \simeq
      \injlim_{i}\Map(V\otimes V',W_{i})\\
      &\simeq
      \Map\biggl(V\otimes V',\injlim_{i}W_{i}\biggr)
      \simeq
      \Map\biggl(V',\Bigl(\injlim_{i}W_{i}\Bigr)^{V}\biggr),
    \end{split}
  \end{equation*}
  where we have used
  the \(\lambda\)-compactness of \(V\otimes V'\).
\end{proof}

\begin{proof}[Proof of \cref{x8ypbm}]
  From the inclusion \(\cat{C}_{\lambda}\to\cat{C}\),
  we obtain
  a canonical morphism
  \begin{equation*}
    L\colon\PShv^{\cat{V}}(\cat{C}_{\lambda})\to\cat{C}
  \end{equation*}
  in~\(\Cat{Pr}^{\cat{V}}\)
  by~\cite[Theorem~5.1]{Heine23}
  (see also~\cite{Hinich23}).
  We wish to observe that it is a localization
  and determine the essential image of its right adjoint~\(R\).

Let \(C\) be an object of~\(\cat{C}\).
  We write it as a (conical) \(\lambda\)-filtered colimit
  \(\injlim_{i}C_{i}\)
  of \(\lambda\)-compact objects.
  Since \(L\) preserves \(\lambda\)-compact objects,
  \(R\) preserves \(\lambda\)-filtered colimits.
  This shows that
  \(L(R(C))\to C\) is an equivalence.
  Also, \cref{xybys9}
  implies that
  the image of~\(R\)
  is contained
  in \(\Fun^{\cat{V}}_{\lex{\lambda}}
  ((\cat{C}_{\lambda})^{\op},\cat{V})\).

  We then prove the reverse inclusion.
  We consider an object~\(F\)
  in \(\Fun^{\cat{V}}_{\lex{\lambda}}
  ((\cat{C}_{\lambda})^{\op},\cat{V})\).
  We then consider
  \(\cat{K}=(\cat{C}_{\lambda})_{/F}\)
  as an (unenriched) category,
  which is \(\lambda\)-filtered.
  Then we take the colimit~\(F'\) of the tautological diagram
  \(\cat{K}\to\cat{C}_{\lambda}\to\PShv^{\cat{V}}(\cat{C}_{\lambda})\)
  and obtain a morphism \(F'\to F\).
  Note that \(F'\) is inside the essential image of~\(R\).
  We claim that this is an equivalence.
  To see this,
  consider a \(\kappa\)-compact object~\(V\) of~\(\cat{V}\)
  and an object~\(C\) of~\(\cat{C}_{\lambda}\).
  Then we have
  \begin{equation*}
    \begin{split}
      \Map(V,F'(C))
      &\simeq\Map\biggl(V,\injlim_{C'}\cat{C}(C',C)\biggr)
      \simeq\injlim_{C'}\Map(V,\cat{C}(C',C))
      \simeq\injlim_{C'}\Map\bigl(\unit,\cat{C}\bigl(C',C^{V}\bigr)\bigr)\\
      &\simeq\Map\bigl(\unit,F\bigl(C^{V}\bigr)\bigr)
      \simeq\Map\bigl(\unit,(F(C))^{V}\bigr)
      \simeq\Map(V,F(C)),
    \end{split}
  \end{equation*}
  which shows the desired claim.
\end{proof}

\subsection{Limits in presentable \texorpdfstring{\(2\)}{2}-categories}\label{ss:fil}

Consider a presentable \(2\)-category~\(\cat{C}\).
By definition,
this is the image of \(\cat{C}(\kappa)\in2\Cat{Pr}^{\kappa}\)
for some regular cardinal~\(\kappa\).
For \(\lambda\geq\kappa\),
we consider \(\cat{C}(\lambda)=\Cat{Pr}^{\lambda}\otimes_{\Cat{Pr}^{\kappa}}\cat{C}(\kappa)\).
Then the underlying \(2\)-category
of~\(\cat{C}\)
is obtained as \(\injlim_{\kappa}\cat{C}(\kappa)\).
Each \(\cat{C}(\lambda)\) admits limits.
Thus,
to understand limits in~\(\cat{C}\),
it suffices to analyze
how the transition functors
\(\cat{C}(\lambda)\to\cat{C}(\mu)\)
for \(\lambda\leq\mu\) interact with limits.

\begin{theorem}\label{two_fil}
  Let \(\kappa\leq\lambda\) be regular cardinals
  and \(\cat{C}\) an object of \(2\Cat{Pr}^{\kappa}\).
  We consider the functor
  \(\cat{C}\to\Cat{Pr}^{\lambda}\otimes_{\Cat{Pr}^{\kappa}}\cat{C}\).
  \begin{enumerate}
    \item\label{i:lim}
      It preserves \(\kappa\)-small limits.
    \item\label{i:mono}
      It is a monomorphism of categories.
  \end{enumerate}
\end{theorem}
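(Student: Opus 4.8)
The plan is to describe both $\cat{C}$ and $\cat{C}(\lambda)=\Cat{Pr}^{\lambda}\otimes_{\Cat{Pr}^{\kappa}}\cat{C}$ as enriched lex-presheaf categories and to identify the transition functor as a change of base along the canonical functor $\iota\colon\Cat{Pr}^{\kappa}\to\Cat{Pr}^{\lambda}$. Taking $\cat{V}=\Cat{Pr}^{\kappa}$ as the enrichment base, we have $\cat{C}\in2\Cat{Pr}^{\kappa}=\Mod_{\cat{V}}(\Cat{Pr}^{\kappa})$, so \cref{x1gpof,x8ypbm} (applied with their $\lambda$ taken to be $\kappa$) present $\cat{C}\simeq\Fun^{\cat{V}}_{\lex{\kappa}}((\cat{C}_{\kappa})^{\op},\cat{V})$. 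The crucial structural input is that base change along the symmetric monoidal colimit-preserving functor $\iota$ is compatible with this presentation: it intertwines the enriched $\kappa$-lex-presheaf construction, giving $\cat{C}(\lambda)\simeq\Fun^{\Cat{Pr}^{\lambda}}_{\lex{\kappa}}((\cat{C}_{\kappa})^{\op},\Cat{Pr}^{\lambda})$, under which $\cat{C}\to\cat{C}(\lambda)$ becomes the base-change functor $\iota_{!}$ on presheaves, acting by $F\mapsto\iota\circ F$ on values. I regard establishing this identification as the main obstacle; the representable case $\cat{C}=\Cat{Pr}^{\kappa}$, where the transition functor is literally $\iota$ itself, confirms it is the correct target.

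Granting the identification, both assertions reduce to properties of $\iota$. For~\cref{i:lim}, I would use that $\kappa$-small limits in these enriched lex-presheaf categories are computed pointwise in the underlying category of the target of enrichment: the full subcategory of $\kappa$-lex functors is closed under $\kappa$-small limits, since $\kappa$-small limits commute both with $\kappa$-small conical limits and with $\kappa$-compact powers (powers being themselves limits). Hence $F\mapsto\iota\circ F$ preserves $\kappa$-small limits as soon as $\iota$ does, and $\iota\colon\Cat{Pr}^{\kappa}\to\Cat{Pr}^{\lambda}$ preserves $\kappa$-small limits by the review in \cref{ss:lim_pr}, i.e.\ by \cref{prod,pb}; the borderline case $\kappa=\aleph_{0}$ is fine because then either $\lambda=\kappa$ and the functor is the identity, or $\lambda\geq\aleph_{1}$ and \cref{pb} applies.

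For~\cref{i:mono}, I would verify the two conditions for a monomorphism in $\Cat{Cat}$—injectivity on cores and monomorphisms of mapping animas—by transferring them from $\iota$. First, $\iota$ is itself a monomorphism of categories: it is injective on objects, since a $\kappa$-compactly generated category is sent to itself regarded as a $\lambda$-compactly generated one, and on mapping spaces the map $\Map_{\Cat{Pr}^{\kappa}}(\cat{D},\cat{E})\to\Map_{\Cat{Pr}^{\lambda}}(\cat{D},\cat{E})$ is the inclusion of the full subspace of colimit-preserving functors that additionally preserve $\kappa$-compact objects into those preserving $\lambda$-compact objects, which is a monomorphism of animas. Along the levelwise description, the transition functor is then post-composition with $\iota$ on objects, hence a monomorphism on cores, and on mapping spaces it is an end (the space of enriched natural transformations) of the component maps induced by $\iota$; since monomorphisms of animas are stable under the limits computing such ends, the mapping-space maps are again monomorphisms.

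The delicate points I expect to spend the most care on are, first, the base-change compatibility asserted in the first paragraph, which I would handle by exhibiting $\Fun^{\cat{V}}_{\lex{\kappa}}((\cat{C}_{\kappa})^{\op},\cat{V})$ as a localization of an enriched free cocompletion and invoking that $\iota_{!}$, being a symmetric monoidal left adjoint, commutes with free cocompletions—checking along the way that the $\kappa$-lex conditions and $\kappa$-compact powers (in the sense of \cref{xwh2oi}) are preserved because $\iota$ is symmetric monoidal and preserves $\kappa$-compact objects; and second, the precise bookkeeping of enriched natural-transformation spaces as ends in~\cref{i:mono}, ensuring that the reduction to the mapping-space monomorphisms of $\iota$ is valid.
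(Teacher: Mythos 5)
Your proposal is correct and takes essentially the same route as the paper: the paper proves this theorem via \cref{xpr2g6}, which uses the enriched presentation of \cref{x8ypbm} to identify the transition functor with the post-composition functor \(\Fun^{\Cat{Pr}^{\kappa}}_{\lex{\kappa}}((\cat{C}_{\kappa})^{\op},\Cat{Pr}^{\kappa})\to\Fun^{\Cat{Pr}^{\kappa}}_{\lex{\kappa}}((\cat{C}_{\kappa})^{\op},\Cat{Pr}^{\lambda})\) induced by \(\iota\), and then deduces the limit statement from \cref{prod,pb} and the monomorphism statement from \(\iota\) being a monomorphism, exactly as you do. The only cosmetic difference is that the paper keeps the enrichment base at \(\Cat{Pr}^{\kappa}\) (viewing \(\Cat{Pr}^{\lambda}\) as the target \(\Cat{Pr}^{\kappa}\)-module) rather than base-changing the enrichment to \(\Cat{Pr}^{\lambda}\) as you propose; the two formulations agree via the base-change adjunction.
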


The consideration above implies the following:

\begin{corollary}\label{xyzmgb}
  Any presentable \(2\)-category admits small limits.
\end{corollary}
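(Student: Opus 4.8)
The plan is to reduce an arbitrary small limit in \(\cat{C}\) to the completeness of a single stage \(\cat{C}(\lambda)\) and then transport it along the colimit \(\cat{C}=\injlim_{\lambda}\cat{C}(\lambda)\) by means of \cref{two_fil}. Recall from \cref{xwh2oi} that a small limit in a presentable \(2\)-category decomposes into a conical limit over a small diagram together with powers by objects of \(\Cat{Pr}\), and that such a limit becomes \(\lambda\)-small as soon as the indexing category is \(\lambda\)-small and the weighting objects are \(\lambda\)-compact. Since every object of \(\Cat{Pr}\) is \(\lambda\)-compact for \(\lambda\) large enough, it therefore suffices to produce \(\lambda\)-small limits in \(\cat{C}\) for all sufficiently large \(\lambda\).

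First I would fix a small diagram \(D\colon I\to\cat{C}\) (together with the powers one wishes to form) and choose a regular cardinal \(\lambda\geq\kappa\) so large that \(I\) is \(\lambda\)-small, the relevant weighting objects are \(\lambda\)-compact, and \(D\) lifts to a diagram \(\widetilde{D}\colon I\to\cat{C}(\lambda)\); the last point is possible because \(\cat{C}=\injlim_{\lambda}\cat{C}(\lambda)\) is a filtered colimit of categories, \(I\) is small, and there is a cofinal proper class of regular cardinals. As \(\cat{C}(\lambda)\) admits limits, I form the desired \(\lambda\)-small limit \(L\) of \(\widetilde{D}\) in \(\cat{C}(\lambda)\) and let \(\overline{L}\) be its image in \(\cat{C}\), equipped with its tautological cone.

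It then remains to verify that \(\overline{L}\) is a limit of \(D\) in \(\cat{C}\). To test the universal property against an object \(X\in\cat{C}\), I would use filteredness to pick \(\mu\geq\lambda\) over which \(X\), \(\widetilde{D}\), and \(L\) are all defined. Applying \cref{two_fil} with base cardinal \(\lambda\) and target \(\mu\), clause~\cref{i:lim} shows that the transition functor \(\cat{C}(\lambda)\to\cat{C}(\mu)\) preserves \(\lambda\)-small limits, so the image of \(L\) in \(\cat{C}(\mu)\) is again the limit of \(\widetilde{D}\); clause~\cref{i:mono} shows that this functor, and hence \(\cat{C}(\mu)\to\cat{C}\), is a monomorphism, i.e.\ a full embedding, so that \(\Hom_{\cat{C}(\mu)}(X,\X)\) agrees with \(\Hom_{\cat{C}}(X,\X)\) on the objects in sight. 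Combining the two yields a natural equivalence \(\Hom_{\cat{C}}(X,\overline{L})\simeq\lim_{I}\Hom_{\cat{C}}(X,D)\), together with the analogous identity for the powers, which is exactly the universal property of the limit.

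The main obstacle is the interaction of the two clauses of \cref{two_fil}: the limit is only ever constructed at the single stage \(\lambda\), and to see that it stays universal in \(\cat{C}\) one needs simultaneously that every later transition functor preserves it (so that it remains a limit wherever the test object lives) and that these functors are full embeddings (so that passing to \(\cat{C}\) neither creates nor destroys the relevant hom-objects). The smallness of \(I\), the \(\lambda\)-compactness of the weights, and the abundance of regular cardinals are precisely what let each finite piece of this verification be carried out at a single stage, so that all of the genuine difficulty is absorbed into \cref{two_fil} itself.
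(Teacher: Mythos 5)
Your skeleton is exactly the paper's: realize the underlying \(2\)-category as the class-indexed filtered colimit of the stages \(\cat{C}(\lambda)\), form the limit of a small diagram at a single stage where it becomes \(\lambda\)-small, and invoke the two clauses of \cref{two_fil} to pass to \(\cat{C}\). However, there is a genuine gap in your final step: you read clause~(\cref{i:mono}) as saying the transition functors are \emph{full embeddings}, and conclude that \(\Hom_{\cat{C}(\mu)}(X,\X)\) agrees with \(\Hom_{\cat{C}}(X,\X)\) on the objects in sight. A monomorphism of categories need not be fully faithful, and here it genuinely is not: already for the unit object, where \(\cat{C}(\mu)=\Cat{Pr}^{\mu}\), the mapping spaces grow strictly. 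For instance, \(\X\otimes\SS^{\oplus\aleph_{1}}\colon\Cat{Sp}\to\Cat{Sp}\) preserves colimits and \(\aleph_{2}\)-compact objects but not \(\aleph_{1}\)-compact objects, so it lies in \(\Map_{\Cat{Pr}^{\aleph_{2}}}(\Cat{Sp},\Cat{Sp})\) but not in \(\Map_{\Cat{Pr}^{\aleph_{1}}}(\Cat{Sp},\Cat{Sp})\). Thus mapping categories keep acquiring new objects at every stage and never stabilize; what your argument actually establishes is only that the stage-\(\mu\) part of \(\Hom_{\cat{C}}(X,\overline{L})\) matches the stage-\(\mu\) part of \(\lim_{I}\Hom_{\cat{C}}(X,D)\), which is not the universal property in \(\cat{C}\).

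The repair, which I take to be the paper's intended reading of ``the consideration above,'' is as follows. Apply clause~(\cref{i:lim}) at \emph{every} pair of stages, so that the image of \(L\) is a limit of \(\widetilde{D}\) in each \(\cat{C}(\nu)\) for \(\nu\geq\lambda\), not just in \(\cat{C}(\mu)\). Then use clause~(\cref{i:mono}) for what it really says: each \(\Hom_{\cat{C}(\nu)}(X,Y)\) embeds into \(\Hom_{\cat{C}}(X,Y)\) as a union of connected components, so that \(\Hom_{\cat{C}}(X,Y)\) is the class-indexed increasing union of the stagewise mapping objects. Finally, commute the small limit over \(I\) with this union. This exchange is where both hypotheses are genuinely needed: the index class of regular cardinals is filtered with respect to every small cardinal, so any small family of stages has an upper bound; and monomorphisms induce equivalences on path spaces between points in their image, so all coherence data of a cone whose vertices are visible at stage \(\nu\) already lives at stage \(\nu\). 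Interpreted as a full-embedding statement, clause~(\cref{i:mono}) would be false, and the corollary would not need any exchange argument at all. (A minor point: \cref{xwh2oi} is a definition and does not assert that \(2\)-categorical limits decompose into conical limits and powers, though that reading of ``limits'' is harmless here.)
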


\begin{remark}\label{x4y2oo}
The essential content of~\cref{i:lim} of \cref{two_fil}
  is that
  it preserves pullbacks,
  since it is easy to see that
  it preserves \(\kappa\)-small products
  via ambidexterity.
\end{remark}

We proceed to prove \cref{two_fil}.
The key is the following:

\begin{lemma}\label{xpr2g6}
  Let \(\kappa\) be a regular cardinal.
  Let \(\cat{C}\) be an object of \(\CAlg(\Cat{Pr}^{\kappa})\)
  and \(\cat{C}\to\cat{D}\) a morphism in \(\CAlg(\Cat{Pr})\).
  Assume that
  \(\cat{C}\to\cat{D}\) preserves \(\kappa\)-small limits.
  Then the following hold
  for an object~\(\cat{M}\) of \(\Mod_{\cat{C}}(\Cat{Pr}^{\kappa})\):
  \begin{enumerate}
    \item\label{i:up_li}
      The functor
      \(\cat{M}\to\cat{D}\otimes_{\cat{C}}\cat{M}\)
      preserves \(\kappa\)-small limits.
    \item\label{i:up_mo}
      When \(\cat{C}\to\cat{D}\)
      is a monomorphism,
      the functor
      \(\cat{M}\to\cat{D}\otimes_{\cat{C}}\cat{M}\)
      is a monomorphism.
  \end{enumerate}
\end{lemma}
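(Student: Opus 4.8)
The plan is to reduce both statements, by means of the enriched formalism of \cref{ss:ecg}, to the two hypotheses on $\phi\colon\cat{C}\to\cat{D}$: that it preserves $\kappa$-small limits and, for \cref{i:up_mo}, that it is fully faithful. Since $\cat{C}\in\CAlg(\Cat{Pr}^{\kappa})$, I would take $\cat{C}$ itself as the base of enrichment $\cat{V}$, so that \cref{x8ypbm} (with $\lambda=\kappa$) presents $\cat{M}\simeq\Fun^{\cat{C}}_{\lex{\kappa}}(\cat{A}^{\op},\cat{C})$, where $\cat{A}=\cat{M}_{\kappa}$ is a small $\cat{C}$-category with conical $\kappa$-small colimits and $\kappa$-compact tensors. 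The structural input I would establish is that base change along $\phi$ commutes with this enriched free $\kappa$-cocompletion: regarding $\cat{D}$ as a presentable $\cat{C}$-category via $\phi$, there is a natural equivalence $\cat{D}\otimes_{\cat{C}}\cat{M}\simeq\Fun^{\cat{C}}_{\lex{\kappa}}(\cat{A}^{\op},\cat{D})$ under which the canonical functor $u\colon\cat{M}\to\cat{D}\otimes_{\cat{C}}\cat{M}$ becomes postcomposition $F\mapsto\phi\circ F$. This rests on the identification $\PShv^{\cat{C}}(\cat{A})\otimes_{\cat{C}}\cat{N}\simeq\Fun^{\cat{C}}(\cat{A}^{\op},\cat{N})$ for a presentable $\cat{C}$-module $\cat{N}$, together with the compatibility of the $\lex{\kappa}$-localization with $\otimes_{\cat{C}}\cat{N}$ (applied to $\cat{N}=\cat{D}$). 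Carrying this identification out rigorously within Heine's framework is the main obstacle; once it is in hand, both parts are short.

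Granting this, \cref{i:up_li} is immediate. Conical $\kappa$-small limits in $\Fun^{\cat{C}}_{\lex{\kappa}}(\cat{A}^{\op},\cat{E})$ are computed pointwise for $\cat{E}\in\{\cat{C},\cat{D}\}$, because the $\lex{\kappa}$-subcategory is closed under $\kappa$-small limits inside the full presheaf category (limits commute with limits). Since $\phi$ preserves $\kappa$-small limits, postcomposition with $\phi$ preserves these pointwise limits, and hence $u$ preserves $\kappa$-small limits.

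For \cref{i:up_mo} it suffices to show that $u=\phi\circ(\X)$ is fully faithful, as a fully faithful functor is a monomorphism in $\Cat{Cat}$. I would write an arbitrary object of $\cat{M}$ as a conical $\kappa$-filtered colimit of representables $h_{a}$ for $a\in\cat{A}$; since $u$ preserves colimits and $\Map_{\cat{M}}(\X,G)$ carries colimits to limits, this reduces the claim to showing that $\Map_{\cat{M}}(h_{a},G)\to\Map_{\cat{D}\otimes_{\cat{C}}\cat{M}}(uh_{a},uG)$ is an equivalence for every $G$. By enriched co-Yoneda the two sides are $\Map_{\cat{C}}(\unit,G(a))$ and $\Map_{\cat{D}}(\unit,\phi(G(a)))$, and the comparison map is the one induced by $\phi$ on $\Map(\unit,\X)$ (using $\phi(\unit)\simeq\unit$). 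This is an equivalence precisely because $\phi$ is fully faithful, so $u$ is fully faithful and therefore a monomorphism. The only genuinely nontrivial step remains the base-change identification flagged above; everything else is formal from the pointwise description of limits and enriched Yoneda.
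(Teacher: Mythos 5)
Your overall reduction coincides with the paper's own proof: apply \cref{x8ypbm} with \(\cat{V}=\cat{C}\) and \(\lambda=\kappa\) to identify \(\cat{M}\to\cat{D}\otimes_{\cat{C}}\cat{M}\) with the postcomposition functor
\begin{equation*}
  \Fun^{\cat{C}}_{\lex{\kappa}}((\cat{M}_{\kappa})^{\op},\cat{C})
  \to
  \Fun^{\cat{C}}_{\lex{\kappa}}((\cat{M}_{\kappa})^{\op},\cat{D}),
\end{equation*}
and the base-change identification you flag as the main obstacle is exactly what the paper also attributes to \cref{x8ypbm} without further elaboration, so you are not behind the paper there. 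Your argument for~\cref{i:up_li} (limits in the \(\lex{\kappa}\)-subcategory are computed pointwise, and postcomposition with a \(\kappa\)-small-limit-preserving functor preserves them) is the intended one.

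The gap is in~\cref{i:up_mo}: you silently strengthen the hypothesis from ``\(\cat{C}\to\cat{D}\) is a monomorphism'' to ``\(\cat{C}\to\cat{D}\) is fully faithful.'' In \(\Cat{Cat}\) these differ: every fully faithful functor is a monomorphism, but a general monomorphism is the inclusion of a possibly \emph{non-full} subcategory. The distinction is not academic here, because the application of this lemma in the paper (the proof of \cref{two_fil}) takes \(\cat{C}\to\cat{D}\) to be \(\Cat{Pr}^{\kappa}\to\Cat{Pr}^{\lambda}\), which is a monomorphism but not fully faithful: a colimit-preserving functor can preserve \(\lambda\)-compact objects without preserving \(\kappa\)-compact ones (for instance \(\X\otimes\SS^{\oplus\aleph_{0}}\colon\Cat{Sp}\to\Cat{Sp}\) preserves \(\aleph_{1}\)-compacts but not compacts), so on mapping spaces the comparison is a proper inclusion of path components rather than an equivalence. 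Consequently your Yoneda computation, which needs \(\Map_{\cat{C}}(\unit,X)\to\Map_{\cat{D}}(\unit,\phi(X))\) to be an equivalence, proves a strictly weaker statement that does not cover the intended use. The repair is more formal than your argument and needs no Yoneda at all: \(\phi\) is a monomorphism if and only if the diagonal \(\cat{C}\to\cat{C}\times_{\cat{D}}\cat{C}\) is an equivalence; since \(\Fun^{\cat{C}}_{\lex{\kappa}}((\cat{M}_{\kappa})^{\op},\X)\) preserves this pullback (limits in enriched functor categories, and in their \(\lex{\kappa}\)-subcategories, are computed pointwise, as you already used for~\cref{i:up_li}, and the pullback of presentable \(\cat{C}\)-modules is computed on underlying categories by \cref{pb}), applying it to that equivalence exhibits the diagonal of the postcomposition functor as an equivalence, i.e., postcomposition with a monomorphism is again a monomorphism.
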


\begin{proof}
By \cref{x8ypbm},
  the functor \(\cat{M}\to\cat{D}\otimes_{\cat{C}}\cat{M}\)
  is identified with the functor
  \begin{equation*}
    \Fun_{\lex{\kappa}}^{\cat{C}}((\cat{M}_{\kappa})^{\op},\cat{C})
    \to
    \Fun_{\lex{\kappa}}^{\cat{C}}((\cat{M}_{\kappa})^{\op},\cat{D})
  \end{equation*}
  induced from \(\cat{C}\to\cat{D}\).
  From this, we see~\cref{i:up_li,i:up_mo}.
\end{proof}

\begin{proof}[Proof of \cref{two_fil}]
By \cref{xuizju,prod,pb},
  we can apply~\cref{i:up_li} of \cref{xpr2g6}
  to obtain~\cref{i:lim}.
  Furthermore,
  since \(\Cat{Pr}^{\kappa}\to\Cat{Pr}^{\lambda}\)
  is a monomorphism,
  we can apply~\cref{i:up_mo} of \cref{xpr2g6}
  to obtain~\cref{i:mono}.
\end{proof}

\subsection{The adjoint functor theorem}\label{ss:aft}

The same idea
as in the proof of \cref{xpr2g6} proves \cref{adjoint},
which states the existence of a right adjoint:

\begin{proof}[Proof of \cref{adjoint}]
  Let \(F\colon\cat{C}\to\cat{D}\)
  be a morphism in \(2\Cat{Pr}\).
  We choose a regular cardinal~\(\kappa\)
  and a morphism
  \(\cat{C}(\kappa)\to\cat{D}(\kappa)\)
  in \(2\Cat{Pr}^{\kappa}\)
  that induces~\(F\).
  For \(\lambda\geq\kappa\),
  we write
  \(\cat{C}(\lambda)\to\cat{D}(\lambda)\)
  for the induced morphism in \(2\Cat{Pr}^{\lambda}\).
  We need to see that
  \begin{equation*}
    \begin{tikzcd}
      \cat{C}(\lambda)\ar[r]\ar[d]&
      \cat{D}(\lambda)\ar[d]\\
      \cat{C}(\mu)\ar[r]&
      \cat{D}(\mu)
    \end{tikzcd}
  \end{equation*}
  is right adjointable
  for \(\kappa\leq\lambda\leq\mu\).
  We argue as in the proof of \cref{xpr2g6}.
  The diagram
  \begin{equation*}
    \begin{tikzcd}
      \Fun^{\Cat{Pr}^{\kappa}}_{\lex{\kappa}}((\cat{C}(\kappa)_{\kappa})^{\op},\Cat{Pr}^{\lambda})
      \ar[d]&
      \Fun^{\Cat{Pr}^{\kappa}}_{\lex{\kappa}}((\cat{D}(\kappa)_{\kappa})^{\op},\Cat{Pr}^{\lambda})
      \ar[l]\ar[d]\\
      \Fun^{\Cat{Pr}^{\kappa}}_{\lex{\kappa}}((\cat{C}(\kappa)_{\kappa})^{\op},\Cat{Pr}^{\mu})
      &
      \Fun^{\Cat{Pr}^{\kappa}}_{\lex{\kappa}}((\cat{D}(\kappa)_{\kappa})^{\op},\Cat{Pr}^{\mu})
      \ar[l]
    \end{tikzcd}
  \end{equation*}
  induced from \(\cat{C}(\kappa)_{\kappa}\to\cat{D}(\kappa)_{\kappa}\)
  and \(\Cat{Pr}^{\lambda}\to\Cat{Pr}^{\mu}\)
  commutes,
  which is the desired result.
\end{proof}

\section{Limits of presentable \texorpdfstring{\(2\)}{2}-categories}\label{s:san}

We prove \cref{false}
by showing that
\(2\Cat{Pr}\)
does not admit a certain pullback
in \cref{ss:culprit}.
Our argument relies on the notion of the complexity of a dominant morphism,
which we study in \cref{ss:cplx}.
We use this counterexample
to prove \cref{omega}
in \cref{ss:hom}.

\subsection{Closure under colimits}\label{ss:sub}

\begin{definition}\label{x7c5js}
  Let \(\kappa\) be a regular cardinal
  and \(\cat{A}\) an object of \(\Cat{Rex}^{\kappa}\),
  i.e., a category admitting \(\kappa\)-small colimits.
  For a replete\footnote{A full subcategory is called \emph{replete}
    if it is closed under equivalences.
  } full subcategory~\(\cat{S}\),
  we define another replete full subcategory \(T^{\alpha}_{\kappa}(\cat{S})\)
  for any ordinal~\(\alpha\).
  We first define \(T_{\kappa}(\cat{S})\)
  to be the full subcategory
  spanned by the pushouts
  of spans of the \(\kappa\)-small coproducts
  of objects in~\(\cat{S}\).
  We then define \(T_{\kappa}^{\alpha}(\cat{S})\)
  via transfinite recursion:
  \begin{equation*}
    T_{\kappa}^{\alpha}(\cat{S})=
    \begin{cases}
      \cat{S}&\text{if \(\alpha=0\),}\\
      T_{\kappa}(T_{\kappa}^{\alpha}(\cat{S}))&\text{if \(\alpha=\beta+1\),}\\
      \bigcup_{\beta<\alpha}T_{\kappa}^{\beta}(\cat{S})&\text{if \(\alpha\) is a limit ordinal.}
    \end{cases}
  \end{equation*}
  Note that \(\cat{A}\) is implicit in the notation.
\end{definition}

The following functoriality is clear:

\begin{lemma}\label{xup8l5}
  In the situation of \cref{x7c5js},
  we consider a morphism \(F\colon\cat{A}\to\cat{A}'\)
  in \(\Cat{Rex}^{\kappa}\)
  and write \(\cat{S}'\) for the essential image of~\(\cat{S}\).
  Then \(F\) restricts to 
  \(T_{\kappa}^{\alpha}(\cat{S})\to T_{\kappa}^{\alpha}(\cat{S}')\)
  for any ordinal~\(\alpha\).
\end{lemma}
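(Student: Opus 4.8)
The plan is to induct transfinitely on $\alpha$, with the entire argument resting on the single fact that $F$, being a morphism in $\Cat{Rex}^{\kappa}$, preserves $\kappa$-small colimits; in particular it preserves $\kappa$-small coproducts and pushouts (the latter being colimits over the span $\Lambda^{2}_{0}$, which is $\kappa$-small for every regular $\kappa$).

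Before starting the induction I would isolate two elementary properties of the operator $\cat{S}\mapsto T_{\kappa}(\cat{S})$. \emph{Monotonicity:} if $\cat{S}_{1}\subseteq\cat{S}_{2}$ are replete full subcategories of a common ambient category, then $T_{\kappa}(\cat{S}_{1})\subseteq T_{\kappa}(\cat{S}_{2})$, which is immediate since the defining spans for the smaller subcategory form a subclass of those for the larger. \emph{Transport:} writing $\cat{S}'$ for the essential image of $\cat{S}$ under $F$, the functor $F$ carries $T_{\kappa}(\cat{S})$ into $T_{\kappa}(\cat{S}')$. For the transport property, an object of $T_{\kappa}(\cat{S})$ has the form $X\amalg_{Z}Y$ for a span $X\gets Z\to Y$ whose three vertices are $\kappa$-small coproducts of objects of $\cat{S}$; since $F$ preserves $\kappa$-small coproducts, each $F$-image vertex is (equivalent to) a $\kappa$-small coproduct of objects of $\cat{S}'$, and since $F$ preserves pushouts we have $F(X\amalg_{Z}Y)\simeq F(X)\amalg_{F(Z)}F(Y)$, which exhibits the image as a pushout of such a span, hence an object of $T_{\kappa}(\cat{S}')$ (repleteness makes the equivalences harmless).

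With these two facts the induction is routine. For $\alpha=0$ the claim is exactly that $F$ sends $\cat{S}=T^{0}_{\kappa}(\cat{S})$ into its essential image $\cat{S}'=T^{0}_{\kappa}(\cat{S}')$. For a successor $\alpha=\beta+1$, the inductive hypothesis gives that $F$ carries $T^{\beta}_{\kappa}(\cat{S})$ into $T^{\beta}_{\kappa}(\cat{S}')$, so the essential image of $T^{\beta}_{\kappa}(\cat{S})$ is contained in $T^{\beta}_{\kappa}(\cat{S}')$; applying transport at the level $T^{\beta}_{\kappa}(\cat{S})$ and then monotonicity yields $F\bigl(T^{\beta+1}_{\kappa}(\cat{S})\bigr)\subseteq T_{\kappa}\bigl(T^{\beta}_{\kappa}(\cat{S}')\bigr)=T^{\beta+1}_{\kappa}(\cat{S}')$. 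For a limit $\alpha$ one simply takes unions: $T^{\alpha}_{\kappa}(\cat{S})=\bigcup_{\beta<\alpha}T^{\beta}_{\kappa}(\cat{S})$, and the hypotheses for each $\beta<\alpha$ assemble directly into the claim for $\alpha$.

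There is no genuine obstacle here — the statement is pure bookkeeping, as the paper's ``clear'' already signals. The only places needing a moment's care are verifying that the shapes involved (finite pushouts and $\kappa$-small coproducts) are indeed preserved by $F$, which is precisely what membership in $\Cat{Rex}^{\kappa}$ guarantees, and, in the successor step, remembering to compose transport with monotonicity so as to land inside $T^{\beta}_{\kappa}(\cat{S}')$ rather than merely inside $T_{\kappa}$ of the essential image.
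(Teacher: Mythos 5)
Your proof is correct and is exactly the routine verification the paper intends: the paper gives no proof at all, simply declaring the functoriality ``clear.'' Your transfinite induction---resting on the fact that a morphism in \(\Cat{Rex}^{\kappa}\) preserves \(\kappa\)-small coproducts and pushouts, combined with monotonicity of \(T_{\kappa}\) and repleteness of the essential image---is precisely the bookkeeping being elided.
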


Note that this process stabilizes:

\begin{lemma}\label{xxud2b}
  In the situation of \cref{x7c5js},
  for any ordinal \(\alpha\geq\kappa\),
  we have \(T_{\kappa}^{\kappa}(\cat{S})=T_{\kappa}^{\alpha}(\cat{S})\).
  In particular,
  \(T_{\kappa}^{\kappa}(\cat{S})\)
  is the smallest full subcategory
  of~\(\cat{A}\) that contains~\(\cat{S}\)
  and is closed under \(\kappa\)-small colimits.
\end{lemma}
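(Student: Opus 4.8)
The plan is to show that the operator $T_\kappa$ is inflationary and monotone, so that the transfinite sequence $(T_\kappa^\alpha(\cat{S}))_\alpha$ is increasing, and then to use the regularity of $\kappa$ to force stabilization exactly at stage $\kappa$; the characterization as the smallest $\kappa$-small-cocomplete full subcategory then comes out formally.

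First I would record the two structural properties of $T_\kappa$. Monotonicity ($\cat{S}\subseteq\cat{S}'$ implies $T_\kappa(\cat{S})\subseteq T_\kappa(\cat{S}')$) is immediate from the definition. For the inflationary property $\cat{T}\subseteq T_\kappa(\cat{T})$, note that any object $t$ is the pushout of the span $t\leftarrow\varnothing\to\varnothing$, where $\varnothing$ is the initial object regarded as the empty (hence $\kappa$-small) coproduct; so $t\in T_\kappa(\cat{T})$. A routine transfinite induction then gives $T_\kappa^\alpha(\cat{S})\subseteq T_\kappa^{\alpha'}(\cat{S})$ whenever $\alpha\leq\alpha'$.

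The main step is the inclusion $T_\kappa^{\kappa+1}(\cat{S})\subseteq T_\kappa^{\kappa}(\cat{S})$; granting it, the equality $T_\kappa^\kappa(\cat{S})=T_\kappa^\alpha(\cat{S})$ for all $\alpha\geq\kappa$ follows by a second transfinite induction (successor stages use the inclusion, limit stages are unions already equal to $T_\kappa^\kappa(\cat{S})$). An object of $T_\kappa^{\kappa+1}(\cat{S})=T_\kappa(T_\kappa^\kappa(\cat{S}))$ is a pushout of a span whose three vertices are $\kappa$-small coproducts of objects of $T_\kappa^\kappa(\cat{S})=\bigcup_{\beta<\kappa}T_\kappa^\beta(\cat{S})$. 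There are fewer than $\kappa$ summands in total, and each lies in some $T_\kappa^\beta(\cat{S})$ with $\beta<\kappa$; since $\kappa$ is regular, the supremum $\beta_0$ of these ordinals is still $<\kappa$. Thus all summands lie in $T_\kappa^{\beta_0}(\cat{S})$, so the span is one of $\kappa$-small coproducts of objects of $T_\kappa^{\beta_0}(\cat{S})$ and its pushout lies in $T_\kappa^{\beta_0+1}(\cat{S})$; as $\kappa$ is a limit ordinal we have $\beta_0+1<\kappa$, whence this is contained in $T_\kappa^\kappa(\cat{S})$. This cofinality bookkeeping---keeping track that the $\kappa$-small family of summands only involves ordinals bounded below $\kappa$---is the one place where regularity is essential and is the crux of the whole argument.

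For the ``in particular'' clause, write $\cat{T}=T_\kappa^\kappa(\cat{S})$; by the stabilization, $T_\kappa(\cat{T})=\cat{T}$. Using one-element coproducts shows $\cat{T}$ is closed under pushouts, and the span $\coprod_i s_i\leftarrow\varnothing\to\varnothing$ shows it is closed under $\kappa$-small coproducts; by the standard fact that $\kappa$-small coproducts together with pushouts generate all $\kappa$-small colimits (see \cite{LurieHTT}), $\cat{T}$ is closed under $\kappa$-small colimits. Conversely, any replete full subcategory of $\cat{A}$ that contains $\cat{S}$ and is closed under $\kappa$-small colimits is closed under the operation $T_\kappa$, so it contains every $T_\kappa^\alpha(\cat{S})$ by induction, and in particular $\cat{T}$; this gives minimality. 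Apart from the regularity argument above, every step here is formal, so I expect no further obstacle.
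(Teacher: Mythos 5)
Your proof is correct and follows essentially the same route as the paper's: the crux in both is that a span of $\kappa$-small coproducts of objects drawn from $\bigcup_{\beta<\kappa}T_{\kappa}^{\beta}(\cat{S})$ involves fewer than $\kappa$ summands, so by regularity all of them lie in a single stage $T_{\kappa}^{\beta_{0}}(\cat{S})$ with $\beta_{0}<\kappa$, placing the pushout in $T_{\kappa}^{\beta_{0}+1}(\cat{S})\subseteq T_{\kappa}^{\kappa}(\cat{S})$. You simply make explicit some bookkeeping the paper leaves implicit (the inflationary and monotonicity properties of $T_{\kappa}$, and the deduction of the ``in particular'' clause via the fact that pushouts and $\kappa$-small coproducts generate all $\kappa$-small colimits).
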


\begin{proof}
  It suffices to show
  \(T_{\kappa}(T_{\kappa}^{\kappa}(\cat{S}))
  \subset T_{\kappa}^{\kappa}(\cat{S})\).
  Write an object of the left-hand side
  as
  \begin{equation*}
    \coprod_{i'\in I'}A'_{i'}
    \gets\coprod_{i\in I}A_{i}
    \to\coprod_{j\in J}B_{j},
  \end{equation*}
  where \(I\), \(I'\), and \(J\) are \(\kappa\)-small sets
  and \(A_{i}\), \(A'_{i'}\), and \(B_{j}\)
  are objects of \(T_{\kappa}^{\kappa}(\cat{S})\).
  By definition,
  we can take ordinals~\(\alpha_{i}\), \(\alpha'_{i'}\), and~\(\beta_{j}<\kappa\) 
  such that
  \(A_{i}\in T_{\kappa}^{\alpha_{i}}(\cat{S})\),
  \(A'_{i'}\in T_{\kappa}^{\alpha'_{i'}}(\cat{S})\),
  and \(B_{j}\in T_{\kappa}^{\beta_{j}}(\cat{S})\).
  Let \(\alpha\) be the supremum of these ordinals,
  which is still less than~\(\kappa\) by regularity.
  Then we have \(\alpha<\kappa\).
  This shows that this object is in
  \(T_{\kappa}^{\alpha+1}(\cat{S})\)
  and hence in \(T_{\kappa}^{\kappa}(\cat{S})\).
\end{proof}

\subsection{Dominant morphisms of presentable categories}\label{ss:surj}

We use the following
(possibly nonstandard) terminology:

\begin{definition}\label{xwx9ff}
  We call
  a morphism in~\(\Cat{Pr}\)
  \emph{dominant}
  if the image generates the target under colimits.
\end{definition}

\begin{example}\label{x9pjwc}
  For regular cardinals \(\kappa\leq\lambda\)
  and an object \(\cat{C}\in2\Cat{Pr}^{\kappa}\),
  the functor
  \(\cat{C}\to\Cat{Pr}^{\lambda}\otimes_{\Cat{Pr}^{\kappa}}\cat{C}\)
  is dominant.
  This comes from the fact
  that \(\Cat{Pr}^{\lambda}\) is generated by
  \(\Fun([1],\Cat{Ani})\),
  which lifts to \(\Cat{Pr}^{\kappa}\).
\end{example}

We prove some basic facts:

\begin{lemma}\label{xgl0iq}
  Any morphism
  \(\cat{C}\to\cat{E}\)
  in \(\Cat{Pr}\)
  factors uniquely
  as \(\cat{C}\to\cat{D}\to\cat{E}\)
  such that \(\cat{C}\to\cat{D}\) is dominant
  and \(\cat{D}\to\cat{E}\) is fully faithful.
\end{lemma}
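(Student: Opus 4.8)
The plan is to exhibit dominant morphisms and fully faithful morphisms as the two classes of an orthogonal factorization system on $\Cat{Pr}$: the asserted factorization is exactly the existence statement, and I will deduce uniqueness formally from orthogonality. First I would construct $\cat{D}$ as the colimit-closure of the image, and then prove the orthogonality that pins it down.

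For existence, given $F\colon\cat{C}\to\cat{E}$ I would choose a regular cardinal~$\kappa$ for which $F$ arises from a morphism $f\colon\cat{C}_{\kappa}\to\cat{E}_{\kappa}$ in $\Cat{Rex}^{\kappa}$, so that $F=\Ind_{\kappa}(f)$. Writing $\cat{S}\subset\cat{E}_{\kappa}$ for the essential image of~$f$, I set $\cat{D}_{\kappa}=T_{\kappa}^{\kappa}(\cat{S})$, the smallest full subcategory of $\cat{E}_{\kappa}$ closed under $\kappa$-small colimits and containing~$\cat{S}$ by \cref{xxud2b} (passing to the idempotent completion inside $\cat{E}_{\aleph_{0}}$ when $\kappa=\aleph_{0}$). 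Then $\cat{D}=\Ind_{\kappa}(\cat{D}_{\kappa})$ is $\kappa$-compactly generated, hence an object of $\Cat{Pr}$. The inclusion $\cat{D}_{\kappa}\hookrightarrow\cat{E}_{\kappa}$ is fully faithful and preserves $\kappa$-small colimits, so applying $\Ind_{\kappa}$ yields a fully faithful morphism $\cat{D}\to\cat{E}$ in $\Cat{Pr}^{\kappa}$; and the corestriction of~$f$ gives $\cat{C}\to\cat{D}$, which is dominant because $\cat{S}$ generates $\cat{D}_{\kappa}$ under $\kappa$-small colimits while $\cat{D}_{\kappa}$ generates $\cat{D}$ under $\kappa$-filtered colimits, so the image generates $\cat{D}$ under all colimits.

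For uniqueness I would establish the orthogonality that in any commuting square in $\Cat{Pr}$ whose left leg~$p$ is dominant and whose right leg~$i$ is fully faithful, the space of diagonal fillers is contractible. The key observation is that the essential image of a fully faithful colimit-preserving functor is closed under colimits (computed in the target); hence, since the bottom map~$v$ preserves colimits and $v\circ p$ lands in the image of~$i$, all of~$v$ lands there, producing the filler~$d$ uniquely, with $d$ colimit-preserving because $i$ reflects colimits. That $d$ is a morphism in $\Cat{Pr}$ is automatic, since any colimit-preserving functor between presentable categories preserves $\lambda$-compact objects for all sufficiently large~$\lambda$. Applying this to two factorizations of the same~$F$ gives mutually inverse comparison maps between the two middle terms, whence uniqueness.

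The hard part will be the bookkeeping at $\kappa=\aleph_{0}$ — ensuring idempotent-completeness so that $\cat{D}=\Ind_{\aleph_{0}}(\cat{D}_{\aleph_{0}})$ genuinely realizes the colimit-closure and the inclusion remains fully faithful — together with checking that $\Ind_{\kappa}$ sends the fully faithful, $\kappa$-small-colimit-preserving inclusion $\cat{D}_{\kappa}\hookrightarrow\cat{E}_{\kappa}$ to an honestly fully faithful functor; by contrast the orthogonality argument is conceptually clean once the essential image of~$i$ is known to be colimit-closed.
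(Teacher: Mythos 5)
Your proposal is correct and takes essentially the same route as the paper: the paper likewise fixes a regular cardinal~\(\kappa\) with the given morphism in \(\Cat{Pr}^{\kappa}\) and defines \(\cat{D}\) to be the full subcategory of~\(\cat{E}\) generated under colimits by the image of~\(\cat{C}_{\kappa}\) --- which is exactly your \(\Ind_{\kappa}(T^{\kappa}_{\kappa}(\cat{S}))\) --- and declares the uniqueness clear. Your explicit \(\Ind_{\kappa}\)/\(T^{\kappa}_{\kappa}\) presentation (making presentability of~\(\cat{D}\) manifest) and the orthogonality argument for uniqueness simply fill in the details the paper leaves implicit.
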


\begin{proof}
  We choose~\(\kappa\)
  such that \(\cat{C}\to\cat{E}\) is in \(\Cat{Pr}^{\kappa}\).
  Then we can define~\(\cat{D}\)
  to be the full subcategory of~\(\cat{E}\)
  generated by the image of~\(\cat{C}_{\kappa}\).
  The uniqueness is clear.
\end{proof}

\begin{lemma}\label{xgd59c}
  Consider a morphism \(F\colon\cat{C}\to\cat{D}\) in \(\Cat{Pr}^{\kappa}\)
  for a regular cardinal~\(\kappa\).
  We write \(\cat{T}\) for the smallest full subcategory
  of~\(\cat{D}\) that contains \(F(\cat{C}_{\kappa})\)
  and is closed under \(\kappa\)-small colimits.
  Then \(F\) is dominant as a morphism in~\(\Cat{Pr}\)
  if and only if
  any object of~\(\cat{D}_{\kappa}\)
  is a retract of an object of~\(\cat{T}\).
\end{lemma}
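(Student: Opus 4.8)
The plan is to reformulate dominance entirely in terms of $\kappa$-compact objects and then to transport the statement through $\Ind_{\kappa}$. First I would record two preliminary reductions. Since $\cat{C}$ is $\kappa$-compactly generated and $F$ preserves colimits, every object in the essential image of $F$ is a colimit of objects of $F(\cat{C}_{\kappa})$; hence $F$ is dominant precisely when $F(\cat{C}_{\kappa})$ generates $\cat{D}$ under colimits. Moreover, because $F$ is a morphism in $\Cat{Pr}^{\kappa}$ it carries $\cat{C}_{\kappa}$ into $\cat{D}_{\kappa}$, and $\cat{D}_{\kappa}$ is closed under $\kappa$-small colimits and under retracts; combined with \cref{xxud2b} this gives $F(\cat{C}_{\kappa})\subseteq\cat{T}\subseteq\cat{D}_{\kappa}$, with $\cat{T}$ a full subcategory of $\cat{D}_{\kappa}$ closed under $\kappa$-small colimits. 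Writing $\cat{T}^{\ic}$ for the full subcategory of $\cat{D}$ of retracts of objects of $\cat{T}$ (which lands in $\cat{D}_{\kappa}$, since retracts of $\kappa$-compact objects are $\kappa$-compact), the assertion to prove becomes simply $\cat{D}_{\kappa}=\cat{T}^{\ic}$.

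For the implication from the retract condition to dominance, I would take $\cat{E}\subseteq\cat{D}$ to be the smallest full subcategory closed under all colimits and containing $F(\cat{C}_{\kappa})$. Then $\cat{E}$ contains $\cat{T}$ (being closed under $\kappa$-small colimits) and is closed under retracts (a retract is the colimit of an idempotent), hence contains $\cat{T}^{\ic}$. Under the hypothesis $\cat{D}_{\kappa}=\cat{T}^{\ic}$ this yields $\cat{D}_{\kappa}\subseteq\cat{E}$, and since $\cat{D}=\Ind_{\kappa}(\cat{D}_{\kappa})$ every object of $\cat{D}$ is a $\kappa$-filtered colimit of $\kappa$-compact ones, forcing $\cat{E}=\cat{D}$; thus $F$ is dominant. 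For the converse, the main work, I would apply $\Ind_{\kappa}$ to the inclusion $\cat{T}\hookrightarrow\cat{D}_{\kappa}$. This inclusion is fully faithful and preserves $\kappa$-small colimits, so $\Ind_{\kappa}$ produces a fully faithful, colimit-preserving functor $\Ind_{\kappa}(\cat{T})\to\Ind_{\kappa}(\cat{D}_{\kappa})=\cat{D}$ whose essential image is closed under colimits and contains $\cat{T}\supseteq F(\cat{C}_{\kappa})$. If $F$ is dominant, this colimit-closed image must be all of $\cat{D}$, so the functor is an equivalence; passing to $\kappa$-compact objects and using the standard identification $(\Ind_{\kappa}\cat{T})_{\kappa}\simeq\cat{T}^{\ic}$ gives $\cat{D}_{\kappa}\simeq\cat{T}^{\ic}$, which is exactly the desired retract statement.

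The step I expect to be the crux is the passage through $\Ind_{\kappa}$: one must know that $\Ind_{\kappa}$ of a fully faithful functor preserving $\kappa$-small colimits is again fully faithful with colimit-closed essential image, and that its $\kappa$-compact objects recover the idempotent completion of the source. This is where the decomposition of arbitrary colimits into $\kappa$-small colimits followed by $\kappa$-filtered colimits does the real work, turning ``$F(\cat{C}_{\kappa})$ generates $\cat{D}$ under all colimits'' into the concrete statement about retracts; the two implications then fall out symmetrically, with the remainder being routine bookkeeping with $\kappa$-compact objects.
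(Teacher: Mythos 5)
Your proposal is correct and follows essentially the same route as the paper: the crux in both is the fully faithful, colimit-preserving functor \(\Ind_{\kappa}(\cat{T})\to\cat{D}\), which dominance forces to be an equivalence, after which passing to \(\kappa\)-compact objects identifies \(\cat{D}_{\kappa}\) with the retract closure of \(\cat{T}\). The only cosmetic difference is that the paper concludes by invoking the uniqueness of the dominant/fully-faithful factorization (\cref{xgl0iq}) where you prove the equivalence directly, and you spell out the ``if'' direction that the paper dismisses as clear.
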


\begin{proof}
Since the ``if'' direction is clear,
  we prove the ``only if'' direction.
  We obtain a fully faithful functor \(\Ind_{\kappa}(\cat{T})\to\cat{D}\)
  and the result follows from \cref{xgl0iq}.
\end{proof}

From now on,
we specialize to the stable situation.
The following is fundamental
and used implicitly below:

\begin{lemma}\label{x6g4gr}
  A morphism in~\(\Cat{Pr}_{\st}\)
  is dominant if and only if
  its cofiber is zero.
\end{lemma}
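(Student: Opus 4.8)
The plan is to detect the vanishing of \(\cofib(F)\) through the right adjoint of \(F\) and to identify the resulting condition with dominance; write \(F\colon\cat{C}\to\cat{D}\) for the morphism in \(\Cat{Pr}_{\st}\), with right adjoint \(G\). First I would dispose of the easy implication directly. The cofiber is the pushout \(\cofib(F)=\cat{D}\amalg_{\cat{C}}0\), and the composite \(\cat{C}\xrightarrow{F}\cat{D}\to\cofib(F)\) factors through \(0\); thus the colimit-preserving functor \(\cat{D}\to\cofib(F)\) kills \(\im F\). If \(F\) is dominant, then \(\im F\) generates \(\cat{D}\) under colimits, so the colimit-closed full subcategory of objects sent to \(0\) is all of \(\cat{D}\), and \(\cat{D}\to\cofib(F)\) is the zero functor. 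Since a pushout in \(\Cat{Pr}_{\st}\) is generated under colimits by the images of its two structure functors — here both \(\{0\}\) — we conclude \(\cofib(F)\simeq 0\).

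For the converse I would pass to right adjoints. Under the standard duality between colimit-preserving and accessible limit-preserving functors, the pushout \(\cat{D}\amalg_{\cat{C}}0\) is carried to the full subcategory of \(\cat{D}\) on those \(D\) with \(G(D)\simeq 0\); by adjunction and stability this is exactly the right orthogonal complement \((\im F)^{\perp}=\{D:\Map_{\cat{D}}(F(C),D)\simeq 0 \text{ for all } C\}\). Hence \(\cofib(F)\simeq 0\) if and only if \((\im F)^{\perp}=0\), and it remains to show that the latter forces \(F\) to be dominant. Let \(\cat{D}'\subseteq\cat{D}\) be the smallest full subcategory containing \(\im F\) and closed under colimits. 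It is generated under colimits by a set (the image of a presentable category), hence presentable, and the inclusion \(\iota\colon\cat{D}'\hookrightarrow\cat{D}\) is colimit-preserving and fully faithful, so it admits a right adjoint \(\iota^{R}\). Because being right-orthogonal to \(\im F\) is the same as being right-orthogonal to \(\cat{D}'\), we have \((\cat{D}')^{\perp}=(\im F)^{\perp}=0\). For each \(D\), the cofiber of the counit \(\iota\iota^{R}D\to D\) lies in \((\cat{D}')^{\perp}\), hence vanishes, so \(D\in\cat{D}'\); thus \(\cat{D}'=\cat{D}\) and \(F\) is dominant. Alternatively, one may factor \(F\) via \cref{xgl0iq} as a dominant map followed by a fully faithful one and invoke the octahedral cofiber sequence to reduce to the statement that a fully faithful morphism with vanishing cofiber is an equivalence.

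The main obstacle I anticipate is the converse direction, specifically justifying that \(\cofib(F)\simeq 0\) can be read off as triviality of the orthogonal complement and that the colimit-closure \(\cat{D}'\) behaves like a localizing subcategory: one must check that \(\cat{D}'\) is presentable so that the adjoint functor theorem produces \(\iota^{R}\), and that the counit cofiber genuinely lands in \((\cat{D}')^{\perp}\) (the semiorthogonal decomposition), equivalently that the cofiber of a fully faithful inclusion is its Verdier quotient and vanishes only for an equivalence. The easy implication and the orthogonality reformulation are formal; this identification of the cofiber with a Verdier quotient is where the real content lies.
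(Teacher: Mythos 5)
Your proof is correct and follows essentially the same route as the paper's: both detect the vanishing of the cofiber through the right adjoint (identifying \(\cofib(F)\) in \(\Cat{Pr}_{\st}\) with \(\ker(G)\simeq(\im F)^{\perp}\) via the duality between colimit-preserving and limit-preserving functors) and then run a counit-cofiber argument on the colimit closure of the image, which is exactly the dominant/fully-faithful factorization of \cref{xgl0iq}. The only difference is bookkeeping: the paper cites \cref{xgl0iq} and leaves the identification \(\cofib(F)\simeq\ker(G)\) implicit in the phrase ``killed by \(G\), which is a contradiction,'' whereas you make that identification explicit and reconstruct the factorization by hand; indeed, the ``alternative'' you mention at the end is precisely the paper's own proof.
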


\begin{proof}
  The “only if” direction is clear.
  To prove the “if” direction,
  we use \cref{xgl0iq}.
  So we have to show that
  if a fully faithful functor
  \(F\colon\cat{C}\to\cat{D}\) in~\(\Cat{Pr}_{\st}\)
  has zero cofiber,
  it is an equivalence.
  Suppose not and let \(D\) be an object of~\(\cat{D}\)
  such that \(F(G(D))\to D\)
  is not an equivalence,
  where \(G\) denotes the right adjoint of~\(F\).
  Then the cofiber of this map
  is killed by~\(G\), which is a contradiction.
\end{proof}

\subsection{Complexity of a dominant morphism}\label{ss:cplx}

We here introduce the notion of complexity
that measures how far a dominant morphism is from
being essentially surjective.

\begin{definition}\label{xohpra}
  Let \(\kappa>\aleph_{0}\) be a regular cardinal
  and \(F\colon\cat{C}\to\cat{D}\) a morphism in \(\Cat{Pr}_{\st}^{\kappa}\)
  whose cofiber is zero.
  We write \(c_{\kappa}(F)\)
  for the smallest ordinal~\(\alpha\)
  such that \(T_{\kappa}^{\alpha}(F(\cat{C}_{\kappa}))=\cat{D}_{\kappa}\)
  (see \cref{x7c5js} for the notation),
  which exists by \cref{xgd59c,xxud2b}.
\end{definition}

We then consider the relation of~\(c_{\kappa}(F)\)
as \(\kappa\) varies:

\begin{proposition}\label{x48xj0}
  Let \(\aleph_{0}<\kappa\leq\lambda\) be regular cardinals.
  For a morphism \(\cat{C}\to\cat{D}\) in \(\Cat{Pr}^{\kappa}_{\st}\)
  with zero cofiber,
  we have
  \(c_{\lambda}(F)\leq c_{\kappa}(F)+\omega+2\).
\end{proposition}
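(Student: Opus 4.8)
The plan is to deduce the estimate from two structural facts about the operators $T_{\kappa}$ and $T_{\lambda}$, and then from a single sharp computation. Since every $\kappa$-small coproduct is $\lambda$-small, we have $T_{\kappa}(\cat{S})\subseteq T_{\lambda}(\cat{S})$ for every replete full subcategory $\cat{S}$, hence $T^{\gamma}_{\kappa}(\cat{S})\subseteq T^{\gamma}_{\lambda}(\cat{S})$ for every ordinal $\gamma$ by transfinite induction; moreover $T^{\beta}_{\lambda}\circ T^{\gamma}_{\lambda}=T^{\gamma+\beta}_{\lambda}$, again by induction on $\beta$. Writing $\alpha=c_{\kappa}(F)$ and using $F(\cat{C}_{\kappa})\subseteq F(\cat{C}_{\lambda})$, these give $\cat{D}_{\kappa}=T^{\alpha}_{\kappa}(F(\cat{C}_{\kappa}))\subseteq T^{\alpha}_{\lambda}(F(\cat{C}_{\lambda}))$. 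As $F$ preserves $\lambda$-compact objects we also have $F(\cat{C}_{\lambda})\subseteq\cat{D}_{\lambda}$, and $\cat{D}_{\lambda}$ is closed under $T_{\lambda}$; so it suffices to prove $T^{\omega+2}_{\lambda}(\cat{D}_{\kappa})\supseteq\cat{D}_{\lambda}$. Applying $T^{\omega+2}_{\lambda}$ to the inclusion $\cat{D}_{\kappa}\subseteq T^{\alpha}_{\lambda}(F(\cat{C}_{\lambda}))$ and invoking the composition law then yields $T^{\alpha+\omega+2}_{\lambda}(F(\cat{C}_{\lambda}))=\cat{D}_{\lambda}$, that is, $c_{\lambda}(F)\leq\alpha+\omega+2$.

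The heart of the matter is thus the claim $T^{\omega+2}_{\lambda}(\cat{D}_{\kappa})\supseteq\cat{D}_{\lambda}$. By the standard structure theory of compact objects (see~\cite{LurieHTT}), every $X\in\cat{D}_{\lambda}$ is a retract of the colimit $Y$ of a $\lambda$-small diagram $d\colon I\to\cat{D}_{\kappa}$. I would present such a colimit by its bar construction $Y\simeq\lvert B_{\bullet}\rvert$, where $B_{k}$ is the $\lambda$-small coproduct of the objects $d(i_{0})$ over the $k$-simplices $i_{0}\to\dotsb\to i_{k}$ of $I$, so that each $B_{k}\in T^{1}_{\lambda}(\cat{D}_{\kappa})$. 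Each skeleton $\mathrm{sk}_{n}B$ is built from $\mathrm{sk}_{n-1}B$ by a single pushout whose terms are finite colimits of the $B_{k}$; hence $\mathrm{sk}_{n}B\in T^{g(n)}_{\lambda}(\cat{D}_{\kappa})$ for some finite $g(n)$, and all skeleta lie in $T^{\omega}_{\lambda}(\cat{D}_{\kappa})=\bigcup_{m<\omega}T^{m}_{\lambda}(\cat{D}_{\kappa})$.

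The two remaining steps---the geometric realization and the retract---each cost exactly one application of $T_{\lambda}$, and this is the crucial accounting. Since $\cat{D}$ is stable and $\lambda>\aleph_{0}$, the sequential colimit $Y\simeq\injlim_{n}\mathrm{sk}_{n}B$ is computed by the mapping telescope $Y\simeq\cofib\bigl(\bigoplus_{n}\mathrm{sk}_{n}B\to\bigoplus_{n}\mathrm{sk}_{n}B\bigr)$, which is the pushout of the span $0\gets\bigoplus_{n}\mathrm{sk}_{n}B\to\bigoplus_{n}\mathrm{sk}_{n}B$, each of whose three terms is a $\lambda$-small coproduct of objects of $T^{\omega}_{\lambda}(\cat{D}_{\kappa})$. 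Because $T_{\lambda}$ is by definition formed from pushouts of spans of $\lambda$-small coproducts, this exhibits $Y$ as a single $T_{\lambda}$-step, so $Y\in T^{\omega+1}_{\lambda}(\cat{D}_{\kappa})$. Finally, as $\cat{D}$ is idempotent complete and stable, the retract $X$ corresponds to an idempotent $e$ on $Y$ with $Y\simeq X\oplus\fib(e)$, whence $X\simeq\cofib(\id-e\colon Y\to Y)$; this is again the pushout of a span $0\gets Y\to Y$ with terms in $T^{\omega+1}_{\lambda}(\cat{D}_{\kappa})$, so $X\in T^{\omega+2}_{\lambda}(\cat{D}_{\kappa})$, as needed.

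I expect the main obstacle to be precisely this bookkeeping. A naive decomposition of a sequential colimit or of a retract as ``first a coproduct, then a cofiber'' would cost two $T_{\lambda}$-steps each and yield only the weaker bound $c_{\kappa}(F)+\omega+4$; obtaining $\omega+2$ relies on the observation that one clause of $T_{\lambda}$ already bundles a $\lambda$-small coproduct together with a pushout, so that the telescope and the idempotent cofiber are each \emph{one} step. Secondary care is needed to verify that the skeleta genuinely accumulate at the finite stage $\omega$---so that the coproduct $\bigoplus_{n}\mathrm{sk}_{n}B$ sits at stage $\omega+1$ and no higher---and to use $\aleph_{0}<\kappa\leq\lambda$ throughout, both to ensure that $\cat{D}_{\kappa}$ and $\cat{D}$ are idempotent complete and that $\aleph_{0}$-indexed colimits such as the telescope are themselves $\lambda$-small.
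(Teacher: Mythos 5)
Your strategy is essentially the paper's own: reduce the estimate to showing \(T_{\lambda}^{\omega+2}(\cat{D}_{\kappa})\supseteq\cat{D}_{\lambda}\) (your ordinal bookkeeping in the first paragraph is correct), invoke the fact that every object of \(\cat{D}_{\lambda}\) is a retract of a \(\lambda\)-small colimit of a diagram in \(\cat{D}_{\kappa}\) (this is \cref{xp5oom} in the paper), resolve such a colimit simplicially with terms that are \(\lambda\)-small coproducts of objects of \(\cat{D}_{\kappa}\), absorb all finite stages into \(T_{\lambda}^{\omega}\), and spend one \(T_{\lambda}\)-step on the mapping telescope and one on the retract. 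The only structural difference is that the paper passes to the underlying semisimplicial object, so that its partial realizations \(E_{n}\) sit in clean cofiber sequences \(\Sigma^{n}D_{n+1}\to E_{n}\to E_{n+1}\) and lie in \(T_{\lambda}^{n}\) on the nose, whereas you keep the honest simplicial skeleta and pay finitely many extra steps for the latching objects; since everything is swallowed at stage \(\omega\), this difference is immaterial.

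There is, however, a genuine error in your final step: in a stable category it is \emph{not} true that \(X\simeq\cofib(\id-e\colon Y\to Y)\). Writing \(Y\simeq X\oplus Z\) with \(e=\id_{X}\oplus 0\), we have \(\id-e=0_{X}\oplus\id_{Z}\), and since \(\cofib(0\colon X\to X)\simeq X\oplus\Sigma X\) while \(\cofib(\id_{Z})\simeq 0\), we get \(\cofib(\id-e)\simeq X\oplus\Sigma X\). (For the same reason \(\fib(e)\simeq Z\oplus\Omega Z\), not \(Z\), so your splitting \(Y\simeq X\oplus\fib(e)\) is also off.) You are importing the abelian-category identity \(\operatorname{coker}(\id-e)=\im(e)\), but cofibers are not cokernels. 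Nor can you instead write \(X\simeq\cofib(Z\to Y)\): the complementary summand \(Z\) is itself only known to be a retract of \(Y\), not an object of \(T_{\lambda}^{\omega+1}(\cat{D}_{\kappa})\), so that would be circular. The correct one-step repair is the telescope trick you already used one sentence earlier: \(X\simeq\injlim(Y\xrightarrow{e}Y\xrightarrow{e}\dotsb)\), and this sequential colimit is the cofiber of a self-map of \(\bigoplus_{n<\omega}Y\) (the identity minus the \(e\)-shift), hence the pushout of a span of \(\lambda\)-small coproducts of objects of \(T_{\lambda}^{\omega+1}(\cat{D}_{\kappa})\), using \(\lambda>\aleph_{0}\) to make the countable coproduct \(\lambda\)-small. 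This puts \(X\) in \(T_{\lambda}^{\omega+2}(\cat{D}_{\kappa})\) and restores the stated bound; with that substitution your argument is complete.
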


We need the following to prove this:

\begin{lemma}\label{xp5oom}
  Let \(\kappa\leq\lambda\) be regular cardinals.
  Let \(\kappa\) be a regular cardinal
  and \(\cat{A}\) be a small \(\infty\)-category with \(\kappa\)-small colimits.
  Then any \(\lambda\)-compact object of \(\cat{C}=\Ind_{\kappa}(\cat{A})\)
  for a regular cardinal~\(\lambda\)
  is a retract of a \(\lambda\)-small colimits
  of a diagram in~\(\cat{A}\).
\end{lemma}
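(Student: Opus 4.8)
The plan is to exhibit a given \(\lambda\)-compact object \(X\in\cat{C}=\Ind_{\kappa}(\cat{A})\) as a retract by writing it as a colimit in two compatible ways. Since \(\cat{A}\) admits \(\kappa\)-small colimits, \(\cat{C}\) is cocomplete, and by the very definition of \(\Ind_{\kappa}\) the object \(X\) is a \(\kappa\)-filtered colimit \(X\simeq\injlim_{j\in J}A_{j}\) of objects \(A_{j}\in\cat{A}\), indexed by a \(\kappa\)-filtered category~\(J\). The idea is to break this colimit up along the \(\lambda\)-small full subcategories of~\(J\) and then let the \(\lambda\)-compactness of~\(X\) split off a single piece.

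First I would introduce the poset~\(\mathcal{P}\) of \(\lambda\)-small full subcategories \(Q\subseteq J\), ordered by inclusion. Because \(\lambda\) is regular, a union of fewer than \(\lambda\) members of~\(\mathcal{P}\) is again \(\lambda\)-small, so \(\mathcal{P}\) is \(\lambda\)-filtered; moreover it covers \(J\), as every object lies in some (even singleton) \(Q\). Each restricted colimit \(Y_{Q}:=\injlim_{j\in Q}A_{j}\) exists in the cocomplete category~\(\cat{C}\) and is a \(\lambda\)-small colimit of a diagram in~\(\cat{A}\). The crucial claim is that the canonical comparison map
\[
  \injlim_{Q\in\mathcal{P}}Y_{Q}\to\injlim_{j\in J}A_{j}\simeq X
\]
is an equivalence, realizing \(X\) as a \(\lambda\)-filtered colimit of the objects~\(Y_{Q}\).

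Granting this claim, the conclusion is formal: as \(X\) is \(\lambda\)-compact, \(\Map_{\cat{C}}(X,\X)\) preserves the \(\lambda\)-filtered colimit \(\injlim_{Q}Y_{Q}\simeq X\), so the identity of~\(X\) factors through some~\(Y_{Q}\). The resulting map \(X\to Y_{Q}\), composed with the structure map \(Y_{Q}\to X\), is homotopic to~\(\id_{X}\), exhibiting \(X\) as a retract of \(Y_{Q}=\injlim_{j\in Q}A_{j}\), a \(\lambda\)-small colimit of a diagram in~\(\cat{A}\), as desired.

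The hard part will be the displayed equivalence, which I would prove by a Fubini-type cofinality argument. Forming the Grothendieck construction of \(Q\mapsto Q\), a coCartesian fibration \(p\colon\cat{E}\to\mathcal{P}\) whose fiber over~\(Q\) is~\(Q\), the colimit over its total space~\(\cat{E}\) computes the left-hand side, so it suffices to check that the projection \(q\colon\cat{E}\to J\) is cofinal. This follows from Joyal's Theorem~A once one verifies that, for each \(j\in J\), the slice \(\cat{E}\times_{J}J_{j/}\) is weakly contractible; indeed it is filtered, since any finite diagram in it admits a cocone—one enlarges the ambient subcategory~\(Q\) to contain the diagram and adjoins a cocone furnished by the filteredness of~\(J\), which keeps~\(Q\) in~\(\mathcal{P}\). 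Contractibility of these slices then identifies the left-hand side with \(\injlim_{j\in J}A_{j}\simeq X\), completing the argument.
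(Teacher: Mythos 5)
Your overall strategy---decompose \(X\) as a \(\lambda\)-filtered colimit of colimits of \(\lambda\)-small subdiagrams in \(\cat{A}\), then use \(\lambda\)-compactness to split off one piece---is exactly the strategy of the paper, which delegates the decomposition step to the proof of \cite[Corollary~4.2.3.11]{LurieHTT}. However, your execution has a genuine gap, and it sits at the one point that carries the content of the lemma: the \(\lambda\)-smallness of the subdiagrams. You index the decomposition by ``\(\lambda\)-small full subcategories'' \(Q\subseteq J\), and both readings of that phrase fail. If it means ``spanned by fewer than \(\lambda\) objects,'' then \(Q\) need not be a \(\lambda\)-small \(\infty\)-category at all: \(J\) is only assumed \(\kappa\)-filtered, and a \(\kappa\)-filtered \(\infty\)-category can have arbitrarily large (small) mapping spaces---for instance, any \(\infty\)-category with a terminal object is \(\kappa\)-filtered for every \(\kappa\). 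So \(Y_{Q}=\injlim_{j\in Q}A_{j}\) has not been exhibited as a \(\lambda\)-small colimit of a diagram in \(\cat{A}\), which is precisely what the lemma asserts and precisely what is used downstream in \cref{x48xj0}, where a \(\lambda\)-small colimit is rewritten as a geometric realization whose terms are \(\lambda\)-small coproducts indexed by the simplices of the diagram. If instead ``\(\lambda\)-small'' means essentially \(\lambda\)-small as an \(\infty\)-category, then \(\mathcal{P}\) need not contain any \(Q\) containing a given object (its endomorphism space may already be too large), and it is not closed under unions of fewer than \(\lambda\) members (mapping spaces between objects coming from different members are uncontrolled), so both the covering claim and the \(\lambda\)-filteredness of \(\mathcal{P}\) break.

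The gap is repairable, and the comparison with the paper shows how. The paper's proof starts by presenting the \(\lambda\)-compact object as the colimit of a diagram indexed by a \(\kappa\)-directed \emph{poset} \(P\), which is possible because every \(\kappa\)-filtered \(\infty\)-category admits a cofinal functor from a \(\kappa\)-directed poset (\cite[Section~5.3.1]{LurieHTT}); for a poset, the full subposet spanned by fewer than \(\lambda\) elements genuinely is \(\lambda\)-small, since its nondegenerate simplices are finite chains and there are fewer than \(\lambda\) of those by regularity. With that single change your argument goes through verbatim. Alternatively, keep \(J\) general but index the decomposition by \(\lambda\)-small simplicial subsets of \(J\) rather than full subcategories; the restriction of the diagram to such a subset still takes values in \(\cat{A}\) and now really is \(\lambda\)-small---this is what the proof of \cite[Corollary~4.2.3.11]{LurieHTT} cited by the paper does. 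The rest of your proof (the Grothendieck construction, cofinality via Joyal's Theorem~A with filtered slices, and the final splitting by \(\lambda\)-compactness) is correct and amounts to reproving that decomposition result by hand.
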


\begin{proof}
  Let \(C\) be a \(\lambda\)-compact object.
  We take a (\(\kappa\)-directed) diagram \(F\colon P\to\cat{A}\)
  whose colimit in~\(\cat{C}\) is~\(C\).
  By the proof of~\cite[Corollary~4.2.3.11]{LurieHTT},
  we see that it is a \(\lambda\)-filtered colimit
  of the colimits of \(\lambda\)-small diagrams in~\(\cat{A}\).
  Since \(C\) is \(\lambda\)-compact,
  it must be a retract of one of them.
\end{proof}

\begin{remark}\label{xbcsam}
  In the situation of \cref{xp5oom},
  we cannot arrange the diagram
  to be \(\kappa\)-filtered.
Indeed, one can observe that
  \(\SS^{\oplus\aleph_{\omega}}\) is not
  a \(\aleph_{\omega+1}\)-small \(\aleph_{1}\)-filtered colimit
  of \(\aleph_{1}\)-compact spectra.
\end{remark}

\begin{proof}[Proof of \cref{x48xj0}]
It suffices to prove
  \(T_{\lambda}^{\omega+2}(\cat{D}_{\kappa})=\cat{D}_{\lambda}\).
  Consider an object \(D\in\cat{D}_{\lambda}\).
  By \cref{xp5oom},
  we can write it as a retract
  of a \(\lambda\)-small colimit of a diagram in~\(\cat{D}_{\kappa}\).
  Therefore,
  we need to show that
  \(D\in T_{\lambda}^{\omega+1}(\cat{D}_{\kappa})\)
  when \(D\) is a \(\lambda\)-small colimit of objects in~\(\cat{D}_{\kappa}\).

  Any such \(\lambda\)-small colimit~\(D\)
  is the geometric realization
  of some simplicial object~\(D_{\bullet}\)
  whose terms are \(\lambda\)-small coproducts
  of objects in~\(\cat{D}_{\kappa}\).
  We forget face maps and consider it as a semisimplicial object.\footnote{This matters,
    since the partial geometric realization
    changes when we forget face maps.
  }
  Then we write~\(D\) as the partial geometric realization
  \(\injlim_{n}E_{n}\).
  We then obtain \(E_{0}=D_{0}\),
  the cofiber sequences
  \(\Sigma^{n}D_{n+1}\to E_{n}\to E_{n+1}\)
  for any \(n\geq 0\),
  and
  the cofiber sequence
  \begin{equation*}
    \bigoplus_{n\geq0}E_{n}
    \to
    \bigoplus_{n\geq0}E_{n}
    \to
    D.
  \end{equation*}
  From this,
  we have \(E_{n}\in T_{\kappa}^{n}(\cat{D}_{\kappa})\)
  for \(n\geq1\) by induction,
  and hence \(D\in T_{\kappa}^{\omega+1}(\cat{D}_{\kappa})\) holds.
  This completes the proof.
\end{proof}

We here see an example of a dominant morphism
with large complexity:

\begin{example}\label{x1814s}
  Let \(\kappa\) be a regular cardinal.
  We construct a morphism
  \begin{equation*}
    F\colon
    \cat{C}=\Fun(\kappa^{\delta},\Cat{Sp})
    \to
    \Fun(\kappa^{\op},\Cat{Sp})=\cat{D},
  \end{equation*}
  in~\(\Cat{Pr}_{\st}\)
  where \(\kappa^{\delta}\) is the discrete category
  with the underlying set~\(\kappa\).
  For \(\alpha\in\kappa\),
  we write~\(\SS(\alpha)\) and~\(Y(\alpha)\)
  for the images of~\(\alpha\) under
  the Yoneda embeddings from~\(\kappa^{\delta}\) and~\(\kappa\)
  to~\(\cat{C}\) and~\(\cat{D}\), respectively.
  We define~\(F\) to be the one
  that maps~\(\SS(\alpha)\) to
  \begin{equation*}
    X(\alpha)
    =
    \cofib\biggl(\injlim_{\beta<\alpha}Y(\beta)\to Y(\alpha)\biggr).
  \end{equation*}
  Since the index is \(\kappa\)-small,
  this is a \(\kappa\)-compact object.
  Therefore, \(F\) is a morphism in~\(\Cat{Pr}_{\st}^{\kappa}\).
  We fix an uncountable regular cardinal \(\lambda\geq\kappa\)
  and show \(c_{\lambda}(F)\geq\kappa\).

  For \(D\colon\kappa^{\op}\to\Cat{Sp}\)
  and an ordinal~\(\alpha\),
  we say that it is \emph{\(\alpha\)-nilpotent}
  if \(D(\beta+\alpha)\to D(\beta)\)
  is zero for any ordinal~\(\beta\)
  with \(\beta+\alpha\in\kappa\).
  For example,
  \(X(\alpha)\) is \(1\)-nilpotent for any~\(\alpha\).

  We claim that
  for any ordinal~\(\alpha\),
  any object in
  \(T^{\alpha}_{\lambda}(F(\cat{C}_{\lambda}))\)
  is \(2^{\alpha}\)-nilpotent.
  We prove this by transfinite induction on~\(\alpha\).
  This is clear when \(\alpha=0\).
  To prove this,
  it suffices to show that
  if \(D\) and \(D'\) are \(\alpha\)-nilpotent,
  then the cofiber~\(D''\) of any map \(D\to D'\)
  is (\(\alpha\cdot2\))-nilpotent;
  this follows from observing
  that the map \(D''(\beta)\gets D''(\beta+\alpha\cdot2)\)
  factors through zero,
  which is the cofiber
  of zeros
  between \(D(\beta+\alpha)\gets D(\beta+\alpha\cdot2)\)
  and \(D'(\beta)\gets D'(\beta+\alpha)\).

  Now, the constant diagram~\(Y(0)\)
  is not \(\alpha\)-nilpotent
  for any~\(\alpha\in\kappa\).
Therefore,
  since \(\lvert 2^{\alpha}\rvert=\lvert\alpha\rvert\)
  for any infinite ordinal~\(\alpha\),
  the constant diagram~\(Y(0)\)
  does not belong to
  \(T_{\lambda}^{\kappa}(F(\cat{C}_{\lambda}))\).
  This shows \(c_{\lambda}(F)\geq\kappa\).
\end{example}

\subsection{A nonpresentable pullback}\label{ss:culprit}

We prove the following result,
which implies \cref{false}:

\begin{theorem}\label{x90m7h}
  We consider
  \({\cofib}\colon\Fun([1],\Cat{Pr}_{\st})\to\Cat{Pr}_{\st}\).
  Then
  \begin{equation*}
    \ker(\cofib)=\Fun([1],\Cat{Pr}_{\st})\times_{\Cat{Pr}_{\st}}0
  \end{equation*}
  does not exist in \(2\Cat{Pr}\).
\end{theorem}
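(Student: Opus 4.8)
The plan is to reduce the non-existence to a failure of \(\Kappa\)-compactness and then obstruct that failure using the complexity invariant \(c_{\kappa}\) of \cref{xohpra}. By \cref{xufuyz}, the pullback always exists in the ambient category \(\cat{A}=\Mod_{1\Cat{Pr}}(\Cat{REX}^{\Kappa})\); write \(\cat{P}\) for it. Its underlying category is computed by \cref{prod,pb} as the honest pullback of underlying categories, so by \cref{x6g4gr} its objects are exactly the dominant morphisms in \(\Cat{Pr}_{\st}\) (an arrow has zero cofiber iff it is dominant). By \cref{xmhmpq,xjl7pp}, \(2\Cat{Pr}\) is the full subcategory of \(\Kappa\)-compact objects of \(\cat{A}\) and \(\cat{A}\simeq\IND_{\Kappa}(2\Cat{Pr})\), so an object of \(\cat{A}\) is detected by the mapping spaces out of objects of \(2\Cat{Pr}\). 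For such a test object \(G\), both the \(2\Cat{Pr}\)-pullback (if it exists) and \(\cat{P}\) compute the same iterated pullback of mapping spaces; hence the two would agree in \(\cat{A}\). Therefore \(\ker(\cofib)\) exists in \(2\Cat{Pr}\) if and only if \(\cat{P}\) is itself \(\Kappa\)-compact, i.e.\ a presentable \(2\)-category generated at some level \(\kappa_{0}<\Kappa\). It thus suffices to prove that \(\cat{P}\) is \emph{not} \(\Kappa\)-compact.

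Suppose for contradiction that \(\cat{P}\) is generated at \(\kappa_{0}\), so that \(\cat{P}(\lambda)=\Cat{Pr}^{\lambda}\otimes_{\Cat{Pr}^{\kappa_{0}}}\cat{P}(\kappa_{0})\) for \(\lambda\geq\kappa_{0}\). By \cref{x8ypbm}, together with \cref{xhq2j7,xp5oom}, every \(\lambda\)-compact object of \(\cat{P}(\lambda)\) is a retract of a single \(\lambda\)-small colimit of images of \(\kappa_{0}\)-compact objects of \(\cat{P}(\kappa_{0})\). I would bound the complexity of these objects, viewed as dominant morphisms, \emph{uniformly} in \(\lambda\): the \(\kappa_{0}\)-compact generators satisfy \(c_{\kappa_{0}}\leq\kappa_{0}\); their images at level \(\lambda\) satisfy \(c_{\lambda}\leq\kappa_{0}+\omega+2\) by \cref{x48xj0}; a single \(\lambda\)-small colimit raises \(c_{\lambda}\) by only a fixed \(\omega+1\), by the partial geometric realization estimate used in the proof of \cref{x48xj0}; and passing to a retract does not raise complexity. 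This yields a bound \(c_{\lambda}\leq C\) for all \(\lambda\)-compact objects of \(\cat{P}(\lambda)\), where \(C\) depends only on \(\kappa_{0}\), not on \(\lambda\).

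Now I would invoke \cref{x1814s} to violate this bound. The morphism \(F_{\kappa}\) constructed there has compactly generated source and target and preserves \(\kappa\)-compact objects, so it is a \(\kappa\)-compact (hence \(\lambda\)-compact, for \(\lambda\geq\kappa\)) object of \(\cat{P}\); yet \(c_{\lambda}(F_{\kappa})\geq\kappa\) for every uncountable \(\lambda\geq\kappa\). Choosing \(\kappa>C\) gives \(c_{\lambda}(F_{\kappa})\geq\kappa>C\), contradicting the uniform bound of the previous paragraph. Hence \(\cat{P}\) is generated at no level \(\kappa_{0}\), is not \(\Kappa\)-compact, and \(\ker(\cofib)\) does not exist in \(2\Cat{Pr}\).

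The hard part is the uniform complexity bound, precisely the claim that forming one \(\lambda\)-small colimit of dominant morphisms of complexity \(\leq C_{0}\) produces a dominant morphism of complexity \(\leq C_{0}+\omega+1\), with the estimate independent of \(\lambda\). The subtlety is that in the pullback the target of the colimit morphism is itself the colimit of the targets, so the estimate of \cref{x48xj0} must be carried out relative to a varying target rather than a fixed \(\cat{D}\). I expect to reduce this, as there, to a semisimplicial partial realization whose successive cofiber sequences contribute only a fixed \(\omega+1\) to the complexity regardless of the number of terms; verifying that this argument survives the base change \(\Cat{Pr}^{\lambda}\otimes_{\Cat{Pr}^{\kappa_{0}}}(\X)\), and that the relevant objects remain \(\lambda\)-compact throughout, is where the real work lies.
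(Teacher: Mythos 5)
Your opening reduction is sound and is in fact the paper's own framing (cf.\ \cref{xufuyz,xjl7pp,xmhmpq}): the pullback \(\cat{P}\) exists in the ambient category, and the \(2\Cat{Pr}\)-limit exists if and only if \(\cat{P}\) is \(\Kappa\)-compact, i.e.\ generated at some level \(\kappa_{0}\). (Two repairable inaccuracies: the underlying category of \(\cat{P}\) consists of zero-cofiber arrows in \(\IND_{\Kappa}(\Cat{Pr}_{\st})\), not in \(\Cat{Pr}_{\st}\); and \(F_{\kappa}\) from \cref{x1814s} is an object of \(\Cat{Dom}^{\kappa}\) but not a \(\kappa\)-compact one, since its source \(\Fun(\kappa^{\delta},\Cat{Sp})\) is only \(\kappa^{+}\)-compact in \(\Cat{Pr}^{\kappa}_{\st}\).) The genuine gap is your uniform complexity bound, specifically the claim that every \(\lambda\)-compact object of \(\cat{P}(\lambda)=\Cat{Pr}^{\lambda}\otimes_{\Cat{Pr}^{\kappa_{0}}}\cat{P}(\kappa_{0})\) is a retract of a \emph{single} \(\lambda\)-small colimit of images of \(\kappa_{0}\)-compact objects. \Cref{xp5oom} does not give this: it applies to \(\Ind_{\kappa_{0}}(\cat{A})\), i.e.\ to \(\cat{P}(\kappa_{0})\) itself re-indexed as a \(\lambda\)-compactly generated category, whereas the base change \(\Cat{Pr}^{\lambda}\otimes_{\Cat{Pr}^{\kappa_{0}}}(\X)\) genuinely adds new objects. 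What the paper's tools actually give (\cref{x9pjwc,xgd59c,xxud2b}) is that the image of \(\cat{P}(\kappa_{0})\) generates \(\cat{P}(\lambda)\) under colimits, hence that \(\lambda\)-compacts are retracts of objects of the \emph{transfinite} closure \(T_{\lambda}^{\lambda}\) of the image; this yields only \(c_{\lambda}\leq\lambda\), which is exactly not uniform in \(\lambda\). The distinction between being generated under colimits and being reachable in boundedly many steps is the entire content of the complexity invariant — \cref{x1814s} is precisely a dominant morphism for which one-step generation fails — so this step cannot be waved through. A further unaddressed point: \(\cat{P}(\lambda)\) is generated as a \(\Cat{Pr}^{\lambda}\)-\emph{module}, so the relevant \(\lambda\)-compact generators are of the form \(\cat{E}\otimes M\) with \(\cat{E}\in(\Cat{Pr}^{\lambda})_{\lambda}\) arbitrary, and your estimates say nothing about \(c_{\lambda}(\cat{E}\otimes\X)\) as \(\cat{E}\) varies; eliminating the \(\cat{E}\)-factor via \cref{x9pjwc} reintroduces colimits that need not be \(\lambda\)-small.

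The paper's proof is designed to avoid exactly this. It never bounds the complexity of a colimit-closure: in \cref{xgfihx} complexity (via \cref{x48xj0}) is applied only to objects \emph{in the image} of \(\Cat{Dom}^{\kappa}\), and the closure under colimits is handled by a colimit-stable criterion instead, namely the equivalence \(\Map(\X,X\oplus X)\simeq\Map(\X,X)\) against the monomorphism \(X\oplus X\to X\) built from \cref{xxf9hh}. Then, rather than analyzing the hypothetical levels \(\cat{P}(\kappa_{0})\) structurally, the proof of \cref{x90m7h} uses only the formal comparison maps \(\cat{L}(\kappa)\to\Cat{Dom}^{\kappa}\) and \(\Cat{Pr}^{\lambda}\otimes_{\Cat{Pr}^{\kappa}}\Cat{Dom}^{\kappa}\to\cat{L}(\lambda)\) (which exist only for \(\lambda\gg\kappa\)), interleaves them along two successive towers of cardinals with limit steps \(\kappa_{\infty}\) and \(\lambda_{\infty}\), and extracts an essential-surjectivity statement contradicting \cref{xgfihx}. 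To salvage your shorter route you would have to prove the one-step generation statement for \(\Cat{Pr}^{\lambda}\otimes_{\Cat{Pr}^{\kappa_{0}}}(\X)\), including the \(\cat{E}\)-tensors, uniformly in \(\lambda\); nothing in the paper supplies this, and the detour the paper takes is strong evidence that it is not available by soft arguments.
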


We first see
the following evidence:

\begin{definition}\label{x2xh6s}
  For a regular cardinal \(\kappa>\aleph_{0}\),
  we write
  \begin{equation*}
    \Cat{Dom}^{\kappa}
    =\ker({\cofib}\colon\Fun([1],\Cat{Pr}^{\kappa}_{\st})
    \to
    \Cat{Pr}^{\kappa}_{\st}),
  \end{equation*}
  which we regard as an object of \(2\Cat{Pr}^{\kappa}\)
  by \cref{pb}.
\end{definition}

\begin{proposition}\label{xgfihx}
  For regular cardinals~\(\mu\geq\lambda>\kappa>\aleph_{0}\),
  the functor
  \begin{equation*}
    \Cat{Pr}^{\mu}\otimes_{\Cat{Pr}^{\kappa}}\Cat{Dom}^{\kappa}
    \to\Cat{Pr}^{\mu}\otimes_{\Cat{Pr}^{\lambda}}\Cat{Dom}^{\lambda}
  \end{equation*}
  is not dominant.
  In particular,
  \(2\Cat{Pr}^{\kappa}\to2\Cat{Pr}^{\lambda}\)
  does not preserve pullbacks.
\end{proposition}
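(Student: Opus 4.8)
The plan is to reduce the failure of dominance to a generation problem, produce a witness of large complexity via \cref{x1814s}, and separate it from the image of $\Phi$ by a nilpotency invariant. Write $\iota_{\kappa}\colon\Cat{Dom}^{\kappa}\to\Cat{Pr}^{\mu}\otimes_{\Cat{Pr}^{\kappa}}\Cat{Dom}^{\kappa}$ and $\iota_{\lambda}\colon\Cat{Dom}^{\lambda}\to\Cat{Pr}^{\mu}\otimes_{\Cat{Pr}^{\lambda}}\Cat{Dom}^{\lambda}$ for the canonical functors, and $j\colon\Cat{Dom}^{\kappa}\to\Cat{Dom}^{\lambda}$ for the base change along $\Cat{Pr}^{\kappa}\to\Cat{Pr}^{\lambda}$; since tensoring preserves colimits it preserves zero cofibers, so $j$ lands in dominant morphisms by \cref{x6g4gr}. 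By \cref{x9pjwc} both $\iota_{\kappa}$ and $\iota_{\lambda}$ are dominant, and by construction $\Phi\circ\iota_{\kappa}\simeq\iota_{\lambda}\circ j$. As $\iota_{\kappa}$ is dominant and $\Phi$ preserves colimits, the colimit closure of the image of $\Phi$ equals that of $\iota_{\lambda}(j(\Cat{Dom}^{\kappa}))$. Hence $\Phi$ is dominant if and only if $\iota_{\lambda}(j(\Cat{Dom}^{\kappa}))$ generates $\Cat{Pr}^{\mu}\otimes_{\Cat{Pr}^{\lambda}}\Cat{Dom}^{\lambda}$ under colimits, and it suffices to exhibit one object it cannot reach.

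For the witness I would apply \cref{x1814s} with its regular cardinal taken to be $\lambda$ and its ambient cardinal taken to be $\mu$: this produces a dominant morphism $F\colon\Fun(\lambda^{\delta},\Cat{Sp})\to\Fun(\lambda^{\op},\Cat{Sp})$ in $\Cat{Pr}_{\st}^{\lambda}$ with $c_{\mu}(F)\geq\lambda$, hence an object $w=\iota_{\lambda}(F)$ of the target. On the source side, any $G\in\Cat{Dom}^{\kappa}$ is a dominant morphism in $\Cat{Pr}_{\st}^{\kappa}$, so $c_{\kappa}(G)\leq\kappa$ by \cref{xxud2b}, and \cref{x48xj0} applied to the pair $\kappa\leq\mu$ yields the \emph{uniform} bound $c_{\mu}(G)\leq\kappa+\omega+2=:\beta_{*}<\lambda$, where $\beta_{*}<\lambda$ because $\lambda$ is a regular cardinal strictly above $\kappa$. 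Thus every object arising from the source has $\mu$-complexity below the fixed ordinal $\beta_{*}$, while the witness has complexity at least $\lambda$; this gap is what must be leveraged.

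The crux is to convert this gap into a genuine obstruction to colimit generation, for which raw complexity is too coarse, since sufficiently large coproducts can already inflate complexity to $\lambda$. I therefore transport the nilpotency filtration of \cref{x1814s} to a colimit-preserving detector $\Theta$ valued in $\Fun(\lambda^{\op},\Cat{Sp})$, built from the target of $F$, and I exploit that for each \emph{fixed} ordinal $\beta$ the full subcategory $\mathcal{N}_{\beta}\subset\Fun(\lambda^{\op},\Cat{Sp})$ of $\beta$-nilpotent diagrams is closed under \emph{all} colimits: the defining vanishing condition is stable under the colimit-preserving evaluation functors, whereas coproducts preserve the nilpotency degree and forming a cofiber at most doubles it. Because the source objects obey the uniform complexity bound $\beta_{*}$, their $\Theta$-images land in $\mathcal{N}_{2^{\beta_{*}}}$, and $2^{\beta_{*}}<\lambda$ since $|2^{\beta_{*}}|=|\beta_{*}|<\lambda$; as $\mathcal{N}_{2^{\beta_{*}}}$ is colimit-closed, the entire colimit closure of $\iota_{\lambda}(j(\Cat{Dom}^{\kappa}))$ maps into it. But $\Theta(w)$ records the constant diagram $Y(0)$, which is not $\beta$-nilpotent for any $\beta<\lambda$ and hence lies outside $\mathcal{N}_{2^{\beta_{*}}}$. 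Therefore $w$ is not in the closure, so $\Phi$ is not dominant. The main obstacle is precisely the construction of $\Theta$ and the proof that the $\kappa$-level objects land at a \emph{uniformly} bounded nilpotency degree $<\lambda$; this is where \cref{x48xj0} and the enriched-functor descriptions of \cref{s:pr,ss:ecg} have to be combined, and where the regularity of $\lambda$ is indispensable.

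This establishes the first assertion. For the ``in particular,'' I would specialize to $\mu=\lambda$, so that $\Phi$ becomes the comparison functor $\Cat{Pr}^{\lambda}\otimes_{\Cat{Pr}^{\kappa}}\Cat{Dom}^{\kappa}\to\Cat{Dom}^{\lambda}$. If $2\Cat{Pr}^{\kappa}\to2\Cat{Pr}^{\lambda}$ preserved pullbacks, it would send the kernel $\Cat{Dom}^{\kappa}$ to the kernel $\Cat{Dom}^{\lambda}$, forcing this comparison to be an equivalence and in particular dominant, which contradicts the non-dominance just proved.
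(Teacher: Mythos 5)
Your reduction via \cref{x9pjwc}, your choice of witness from \cref{x1814s}, and your uniform complexity bound \(c_{\mu}(G)\leq\kappa+\omega+2<\lambda\) for objects coming from \(\Cat{Dom}^{\kappa}\) (via \cref{xxud2b,x48xj0}) all agree with the paper. But the step you yourself identify as the crux contains a fatal error: you assert that for a fixed ordinal \(\beta\) the full subcategory of \(\beta\)-nilpotent diagrams in \(\Fun(\lambda^{\op},\Cat{Sp})\) is closed under \emph{all} colimits, while conceding in the same sentence that forming a cofiber ``at most doubles'' the degree. A cofiber is a colimit, so these statements contradict each other, and it is the doubling that is correct. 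Concretely, in the arrow category the objects \(A=(\Sigma^{-1}\SS\to 0)\) and \(B=(0\to\SS)\) are \(1\)-nilpotent (their structure maps are zero), yet \(\Map(A,B)\simeq\SS\) --- componentwise-zero maps need not be zero --- and the cofiber of the nontrivial map is the identity arrow \((\SS\xrightarrow{\id}\SS)\), which is not \(1\)-nilpotent. Worse, your proposed obstruction proves too much: the very morphism \(F\) of \cref{x1814s} that you use as a witness has image consisting of \(1\)-nilpotent objects \(X(\alpha)\) and is nevertheless dominant, so the colimit closure of a family of \(1\)-nilpotent objects can be the \emph{whole} of \(\Fun(\lambda^{\op},\Cat{Sp})\), including the never-nilpotent constant diagram \(Y(0)\). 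Hence no fixed nilpotency class is colimit-closed, and no colimit-preserving detector \(\Theta\) with the properties you require can exist; object-level nilpotency bounds cannot obstruct colimit generation.

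The paper converts the same complexity gap into an obstruction by a mapping-space argument, which is exactly what supplies colimit-stability for free. Exploiting the ordinal identity \(\lambda+\lambda=\lambda\) and the splitting lemma \cref{xxf9hh}, it builds a monomorphism \(X\oplus X\to X\) in \(\Cat{Dom}^{\lambda}\) which is not an equivalence, and then shows that for every \(F\in\Cat{Dom}^{\kappa}\) the map \(\Map_{\Cat{Dom}^{\mu}}(F,X\oplus X)\to\Map_{\Cat{Dom}^{\mu}}(F,X)\) is an equivalence: by \cref{xxud2b,x48xj0}, any map out of \(F\) factors on the target side through \(T_{\mu}^{\kappa+\omega+2}\) of the image of the generators, whose objects are all \(2^{\alpha}\)-nilpotent, and on nilpotent objects \(X\oplus X\to X\) restricts to an equivalence by \cref{xxf9hh} and transfinite induction. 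The class of objects \(Z\) for which \(\Map(Z,X\oplus X)\to\Map(Z,X)\) is an equivalence \emph{is} closed under colimits, since \(\Map(\X,\X)\) turns colimits in the first variable into limits; so it contains the colimit closure of the image of \(\Cat{Dom}^{\kappa}\), but it cannot contain \(X\), because a section of a monomorphism forces it to be an equivalence. In other words, the nilpotency bound is only ever used to control maps \emph{out of} the image, never as a membership property of objects in the closure. If you replace your \(\Theta\) and the would-be colimit-closed nilpotency classes by this contravariant criterion, the rest of your outline, including the specialization to \(\mu=\lambda\) for the ``in particular'' clause, goes through.
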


\begin{proof}
  We prove
  that
  the images of
  \(\Cat{Dom}^{\kappa}\to\Cat{Dom}^{\mu}\)
  and \(\Cat{Dom}^{\lambda}\to\Cat{Dom}^{\mu}\)
  generate different categories under colimits.
  By \cref{x9pjwc}, this suffices.

  Note the ordinal addition
  \(\lambda+\lambda=\lambda\).
  We consider the commutative square
  \begin{equation*}
    \begin{tikzcd}[column sep=large]
      \Fun(\lambda^{\delta},\Cat{Sp})\oplus\Fun(\lambda^{\delta},\Cat{Sp})
      \ar[r,"X\oplus X"]\ar[d,"\simeq"']&
      \Fun(\lambda^{\op},\Cat{Sp})\oplus\Fun(\lambda^{\op},\Cat{Sp})\ar[d]\\
      \Fun((\lambda+\lambda)^{\delta},\Cat{Sp})\ar[r,"X"]&
      \Fun((\lambda+\lambda)^{\op},\Cat{Sp})
    \end{tikzcd}
  \end{equation*}
  in \(\Cat{Pr}_{\st}\),
  where \(X\) is as in \cref{x1814s}
  (see~\cref{xxf9hh} for the right vertical arrow).
  We regard this as a monomorphism
  \(X\oplus X\to X\) in \(\Cat{Dom}^{\lambda}\).
  We conclude the proof by showing
  that for any object~\(F\colon\cat{C}\to\cat{D}\)
  in \(\Cat{Dom}^{\kappa}\),
  \begin{equation*}
    \Map_{\Cat{Dom}^{\mu}}(F,X\oplus X)
    \to
    \Map_{\Cat{Dom}^{\mu}}(F,X)
  \end{equation*}
  is an equivalence.

  Now,
  in general,
  for a morphism \(F'\colon\cat{C}'\to\cat{D}'\) in \(\Cat{Dom}^{\mu}\),
  we have a factorization
  \begin{equation*}
    \begin{tikzcd}
      \cat{C}_{\mu}\ar[r]\ar[d]&
      \cat{D}_{\mu}\ar[d]&
      {}\\
      \cat{C}'_{\mu}\ar[r]&
      T_{\mu}^{\kappa+\omega+2}(F'(\cat{C}'_{\mu}))\ar[r]&
      \cat{D}'_{\mu}
    \end{tikzcd}
  \end{equation*}
  by \cref{xxud2b,x48xj0}.
  Therefore,
  it suffices to show that
  the induced functor
  \begin{equation*}
    T_{\mu}^{\alpha}
    (
    (X\oplus X)
    ((\Fun(\lambda^{\delta},\Cat{Sp})\oplus\Fun(\lambda^{\delta},\Cat{Sp}))_{\mu})
    )
    \to
    T_{\mu}^{\alpha}
    (
    X(\Fun((\lambda+\lambda)^{\delta},\Cat{Sp})_{\mu})
    )
  \end{equation*}
  is an equivalence
  for any ordinal~\(\alpha\leq\kappa+\omega+2\).
  By \cref{x1814s},
  we see that objects on both sides
  are \(2^{\alpha}\)-nilpotent.
  The desired result follows from \cref{xxf9hh} below
  and transfinite induction on~\(\alpha\).
\end{proof}

\begin{lemma}\label{xxf9hh}
  Let \(\kappa\) be an infinite cardinal regarded as an ordinal.
  Let \(\gamma\) be an ordinal.
  We consider the left and right Kan extension functors
  \begin{align*}
    j_{!}&\colon\Fun(\kappa^{\op},\Cat{Sp})\to\Fun((\kappa+\gamma)^{\op},\Cat{Sp}),&
    i_{*}&\colon\Fun(\gamma^{\op},\Cat{Sp})\to\Fun((\kappa+\gamma)^{\op},\Cat{Sp})
  \end{align*}
  to obtain the functor
  \begin{equation*}
    \Fun(\kappa^{\op},\Cat{Sp})\oplus\Fun(\gamma^{\op},\Cat{Sp})
    \to\Fun((\kappa+\gamma)^{\op},\Cat{Sp}).
  \end{equation*}
  We consider an ordinal \(\alpha\in\kappa\).
  This functor restricts to an equivalence
  on the full subcategories
  of \(\alpha\)-nilpotent objects (see \cref{x1814s}).
\end{lemma}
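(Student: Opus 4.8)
The plan is to recognize the two functors as the extension-by-zero functors attached to the sieve/cosieve decomposition of $\kappa+\gamma$ into its initial segment $\kappa$ (via $j$) and complementary final segment $\gamma$ (via $i$), and to reduce the claimed equivalence to the vanishing of a single gluing obstruction, which nilpotence kills.

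First I would record the explicit behavior of the Kan extensions. Computing the defining (co)limits shows that $j_!D$ agrees with $D$ on $\kappa^{\op}$ and is zero on the final segment, while $i_*E$ agrees with $E$ on $\gamma^{\op}$ and is zero on the initial segment. In particular the restriction functors $j^*$ and $i^*$ satisfy $j^*j_!\simeq\id$, $i^*i_*\simeq\id$, $j^*i_*\simeq 0$, and $i^*j_!\simeq 0$, so that $(j^*,i^*)$ is a retraction of our functor $\Phi=(D,E)\mapsto j_!D\oplus i_*E$; that is, $(j^*,i^*)\circ\Phi\simeq\id$. I would also check that both $\Phi$ and $(j^*,i^*)$ preserve $\alpha$-nilpotence: restriction obviously does, and for $\Phi$ the point is that, since $\kappa$ is a cardinal and $\alpha\in\kappa$, we have $\beta+\alpha<\kappa$ for every $\beta<\kappa$, so every arrow of length $\alpha$ has both endpoints in the same segment; hence the only structure maps of $j_!D\oplus i_*E$ that could be nonzero lie within a single segment and vanish by the nilpotence of $D$ and $E$.

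The content of the lemma is then that the counit--unit cofiber sequence $j_!j^*F\to F\to i_*i^*F$ splits for $\alpha$-nilpotent $F$. This is the standard stable recollement attached to the above decomposition, and the obstruction to splitting is the gluing map $\phi\colon i^*F\to i^*j_*j^*F$ induced by the unit $F\to j_*j^*F$; the object $F$ is the direct sum $j_!j^*F\oplus i_*i^*F$ precisely when $\phi$ is null. So it suffices to compute $i^*j_*D$ for $D=j^*F$ and show it vanishes.

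The heart of the argument is this computation. Evaluating the right Kan extension $j_*$ on the final segment gives $(j_*D)(\kappa+\delta)\simeq\projlim_{\beta<\kappa}D(\beta)$, independent of $\delta$, so $i^*j_*D$ is the constant $\gamma^{\op}$-diagram with value $\projlim_{\beta<\kappa}D(\beta)$. Now if $D$ is $\alpha$-nilpotent with $\alpha\in\kappa$, then for each $\beta<\kappa$ we have $\beta+\alpha<\kappa$, and the projection $\projlim_{\beta'}D(\beta')\to D(\beta)$ factors through the zero map $D(\beta+\alpha)\to D(\beta)$; since this holds for all $\beta$, the identity of the limit is null and $\projlim_{\beta<\kappa}D(\beta)\simeq 0$. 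Hence $i^*j_*j^*F\simeq 0$, the gluing map $\phi$ is null, and $F\simeq j_!j^*F\oplus i_*i^*F=\Phi(j^*F,i^*F)$ naturally in $F$. Together with the retraction identity, this exhibits $\Phi$ and $(j^*,i^*)$ as mutually inverse equivalences between the subcategories of $\alpha$-nilpotent objects. I expect the main obstacle to be setting up the recollement cleanly and correctly identifying its gluing functor as $i^*j_*$; once that is in place, the cardinality input $\alpha\in\kappa$ forces the obstruction to vanish essentially formally.
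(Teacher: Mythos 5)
Your overall route is the same as the paper's: you identify \(j_{!}\) and \(i_{*}\) as the two extension-by-zero functors of the recollement attached to the decomposition of \(\kappa+\gamma\) into the initial segment \(\kappa\) and the final segment \(\gamma\), you check that both \(\Phi\) and \((j^{*},i^{*})\) preserve \(\alpha\)-nilpotence (your use of the hypothesis that \(\kappa\) is a \emph{cardinal}, so that \(\beta+\alpha<\kappa\) whenever \(\beta<\kappa\) and \(\alpha\in\kappa\), is exactly the right point), and you reduce everything to the vanishing of \(i^{*}j_{*}j^{*}F\), i.e.\ of \(\projlim_{\beta<\kappa}D(\beta)\) for \(\alpha\)-nilpotent \(D\colon\kappa^{\op}\to\Cat{Sp}\). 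That vanishing is precisely the paper's "key observation" that \(j_{!}D\to j_{*}D\) is an equivalence on \(\alpha\)-nilpotent objects (the cofiber of \(j_{!}D\to j_{*}D\) is \(i_{*}i^{*}j_{*}D\)), and your identification of the gluing obstruction, the splitting criterion, and the cross-term vanishing are all correct and give the same proof skeleton as the paper's.

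The gap is in your justification of that central vanishing. From the fact that every projection \(\projlim_{\beta'<\kappa}D(\beta')\to D(\beta)\) is null you conclude that "the identity of the limit is null." That inference is valid in an ordinary category, where a map into a limit is uniquely determined by its components, but it fails in the \(\infty\)-category of spectra: the identity of \(L=\projlim_{\beta}D(\beta)\) is a point of \(\projlim_{\beta}\Map(L,D(\beta))\), and \(\pi_{0}\) of this limit does not inject into \(\projlim_{\beta}\pi_{0}\Map(L,D(\beta))\) — knowing each \(\pi_{\beta}\) admits a nullhomotopy does not produce a \emph{coherent} family of nullhomotopies (this is exactly the \(\lim^{1}\)-type obstruction). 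Since this vanishing is the only non-formal input to the lemma — everything else is recollement bookkeeping — the proposal as written does not prove it. The statement is true, but a real argument is needed: for instance, choose a cofinal chain \((\beta_{i})_{i<\operatorname{cf}(\kappa)}\) in \(\kappa\) with \(\beta_{i+1}\geq\beta_{i}+\alpha\), so that all transition maps of the restricted diagram are null (componentwise). When \(\operatorname{cf}(\kappa)=\aleph_{0}\) one concludes from the fiber sequence \(\projlim_{n}X_{n}\to\prod_{n}X_{n}\to\prod_{n}X_{n}\), whose second map is \(\id-\textnormal{shift}\): a map \emph{into a product} is null as soon as all its components are, so the shift is genuinely null and \(\id-\textnormal{shift}\) is an equivalence. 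For uncountable cofinality one can show \(\lim^{s}\pi_{t}D=0\) for all \(s\) (a diagram of abelian groups whose transition maps are zero has vanishing derived limits; e.g.\ the cochain complex computing them splits into augmented cochain complexes of nerves of directed posets, which are contractible) and then appeal to the conditionally convergent limit spectral sequence. Without an argument of this kind, the heart of your proof — and of the lemma — is missing.
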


\begin{proof}
  We write \(j_{*}\)
  for the right Kan extension functor
  \(\Fun(\kappa^{\op},\Cat{Sp})\to\Fun((\kappa+\gamma)^{\op},\Cat{Sp})\).
  The key observation is that
  when \(D\colon\kappa^{\op}\to\Cat{Sp}\)
  is \(\alpha\)-nilpotent,
  then the canonical morphism \(j_{!}(D)\to j_{*}(D)\)
  is an equivalence.

  We show its essential surjectivity.
  Consider
  \(D\colon(\kappa+\gamma)^{\op}\to\Cat{Sp}\).
  We have a cofiber sequence
  \begin{equation*}
    j_{!}(D\rvert_{\kappa^{\op}})
    \to
    D
    \to
    i_{*}(D\rvert_{\gamma^{\op}}).
  \end{equation*}
We must show that this splits
  when \(D\) is \(\alpha\)-nilpotent.
  To see this,
  we use the canonical
  morphism \(D\to j_{*}(D\rvert_{\kappa^{\op}})\).

We then prove full faithfulness.
  This reduces to showing
  that any morphism \(i_{*}(E)\to j_{!}(D)\) is zero
  when \(D\colon\kappa^{\op}\to\Cat{Sp}\)
  and \(E\colon\gamma^{\op}\to\Cat{Sp}\)
  are \(\alpha\)-nilpotent.
  The desired claim follows from the observation
  that
  \(i_{*}(E)\to j_{*}(D)\) is always zero.
\end{proof}

\begin{proof}[Proof of \cref{x90m7h}]
  Suppose that the limit \((\cat{L}(\kappa))_{\kappa}\) exists.
  We have a morphism
  \(\cat{L}(\kappa)\to\Cat{Dom}^{\kappa}\)
  for any~\(\kappa\gg0\).
  We also have
  \(\Cat{Pr}^{\lambda}\otimes_{\Cat{Pr}^{\kappa}}\Cat{Dom}^{\kappa}
  \to\cat{L}(\lambda)\)
  for \(\lambda\gg\kappa\)\footnote{This symbol~\(\gg\) here is used as “sufficiently large”
    as usual,
    and should not be confused
    with the use in~\cite[Definition~A.2.6.3]{LurieHTT}.
  }.
  Hence,
  repeating this,
  we can construct a sequence of regular cardinals
  \(\kappa_{0}\leq\kappa_{1}\leq\dotsb\)
  and morphisms
  \begin{equation*}
    \Cat{Dom}^{\kappa_{0}}
    \to\cat{L}(\kappa_{1})\to\Cat{Dom}^{\kappa_{1}}
    \to\cat{L}(\kappa_{2})\to\dotsb
  \end{equation*}
  with the following properties
  for any~\(n\geq0\):
  \begin{itemize}
    \item
      The morphism
      \(\Cat{Dom}^{\kappa_{n}}\to\cat{L}(\kappa_{n+1})\)
      is in \(\Mod_{\Cat{Pr}^{\kappa_{n}}}(\Cat{Pr}^{\kappa_{n+1}})\)
    \item
      The morphism
      \(\cat{L}(\kappa_{n+1})\to\Cat{Dom}^{\kappa_{n+1}}\)
      is in \(2\Cat{Pr}^{\kappa_{n+1}}\).
    \item
      The composite
      \(\Cat{Dom}^{\kappa_{n}}\to\Cat{Dom}^{\kappa_{n+1}}\)
      is the tautological morphism.
    \item
      The composite
      \(\cat{L}(\kappa_{n+1})\to\cat{L}(\kappa_{n+2})\)
      is the structure morphism.
  \end{itemize}
  We then set \(\kappa_{\infty}=(\sup_{n}\kappa_{n})^{+}\) and
  similarly construct a sequence of regular cardinals
  \(\kappa_{\infty}=\lambda_{0}\leq\lambda_{1}\leq\dotsb\)
  and morphisms
  \begin{equation*}
    \Cat{Dom}^{\lambda_{0}}
    \to\cat{L}(\lambda_{1})\to\Cat{Dom}^{\lambda_{1}}
    \to\cat{L}(\lambda_{2})\to\dotsb
  \end{equation*}
  satisfying the same properties.
  We set \(\lambda_{\infty}=(\sup_{n}\lambda_{n})^{+}\).
  Then the composite
  \begin{equation*}
    \injlim_{n}
    \Cat{Pr}^{\lambda_{\infty}}
    \otimes_{\Cat{Pr}^{\kappa_{n}}}
    \Cat{Dom}^{\kappa_{n}}
    \to
    \injlim_{n}
    \Cat{Pr}^{\lambda_{\infty}}
    \otimes_{\Cat{Pr}^{\lambda_{n}}}
    \Cat{Dom}^{\lambda_{n}}
  \end{equation*}
  in \(2\Cat{Pr}^{\lambda_{\infty}}\)
  is an equivalence.
  This implies that
  \begin{equation*}
    \Cat{Pr}^{\lambda_{\infty}}
    \otimes_{\Cat{Pr}^{\kappa_{\infty}}}
    \Cat{Dom}^{\kappa_{\infty}}
    \to
    \injlim_{n}
    \Cat{Pr}^{\lambda_{\infty}}
    \otimes_{\Cat{Pr}^{\lambda_{n}}}
    \Cat{Dom}^{\lambda_{n}}
  \end{equation*}
  is essentially surjective.
  However,
  the closure under colimits
  of their respective essential images
  in \(\Cat{Dom}^{\lambda_{\infty}}\)
  are different,
  as the proof of \cref{xgfihx} shows.
\end{proof}

\subsection{A nonpresentable mapping category}\label{ss:hom}

We conclude this paper by deducing \cref{omega}.

\begin{lemma}\label{xswc9r}
  There is a diagram \(\cat{C}'\to\cat{D}'\gets\cat{D}\)
  in \(\CAlg(2\Cat{Pr})\)
  that does not admit a limit.
\end{lemma}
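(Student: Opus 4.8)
The plan is to promote the nonpresentable pullback of \cref{x90m7h} to a cospan of commutative algebras and then to transfer the failure of the limit along the forgetful functor $\CAlg(2\Cat{Pr})\to2\Cat{Pr}$, which preserves all limits that exist (limits of commutative algebras are computed on underlying objects). Thus it suffices to produce a cospan in $\CAlg(2\Cat{Pr})$ whose underlying cospan in $2\Cat{Pr}$ has no limit: if such a cospan had a limit in $\CAlg(2\Cat{Pr})$, its image would be a limit of the underlying cospan in $2\Cat{Pr}$, and we will arrange the latter to be $\ker(\cofib)$, which does not exist by \cref{x90m7h}.

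The obstacle is that the evident cospan $\Fun([1],\Cat{Pr}_{\st})\xrightarrow{\cofib}\Cat{Pr}_{\st}\gets0$ does not live in $\CAlg(2\Cat{Pr})$ for the pointwise symmetric monoidal structure: the functor $\cofib$ is not symmetric monoidal, and the inclusion of the zero object does not preserve the unit. Both defects are cured by regarding $\Fun([1],\Cat{Pr}_{\st})$ as the category of two-step (increasing) filtered objects of $\Cat{Pr}_{\st}$, i.e.\ by equipping it with the Day convolution symmetric monoidal structure associated to the monoidal poset $([1],\max)$ (truncated addition). That this lies in $\CAlg(2\Cat{Pr})$ follows from $\Cat{Pr}_{\st}\in\CAlg(2\Cat{Pr})$ (by \cref{x8fchl}, applied to $\Cat{Sp}\in\CAlg(\Cat{Pr})$), the existence of finite limits in $2\Cat{Pr}$ (\cref{xyzmgb}), and the presentability of Day convolution. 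For this structure the associated-graded functor
\begin{equation*}
  \mathrm{gr}\colon\Fun([1],\Cat{Pr}_{\st})\to\Cat{Pr}_{\st}\times\Cat{Pr}_{\st},
  \qquad F\mapsto\bigl(F(0),\cofib(F)\bigr),
\end{equation*}
landing in the graded category $\Cat{Pr}_{\st}\times\Cat{Pr}_{\st}$ with the $\max$-graded symmetric monoidal structure, is strong symmetric monoidal (the usual multiplicativity of the associated graded: the degree-$1$ part of $F(1)\otimes G(1)$ splits off $F(0)\otimes\cofib(G)\oplus\cofib(F)\otimes G(0)\oplus\cofib(F)\otimes\cofib(G)$). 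Moreover the degree-$0$ inclusion $(\X,0)\colon\Cat{Pr}_{\st}\to\Cat{Pr}_{\st}\times\Cat{Pr}_{\st}$, $\cat{A}\mapsto(\cat{A},0)$, is symmetric monoidal and now unital, since it carries $\Cat{Sp}$ to the unit $(\Cat{Sp},0)$.

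I would therefore take the cospan
\begin{equation*}
  \Fun([1],\Cat{Pr}_{\st})\xrightarrow{\ \mathrm{gr}\ }\Cat{Pr}_{\st}\times\Cat{Pr}_{\st}\xleftarrow{\ (\X,0)\ }\Cat{Pr}_{\st}
\end{equation*}
in $\CAlg(2\Cat{Pr})$. Its underlying pullback in $2\Cat{Pr}$ consists of those $F$ with $\cofib(F)\simeq0$, which by \cref{x6g4gr} is exactly $\ker(\cofib)$; by \cref{x90m7h} this does not exist in $2\Cat{Pr}$, and so the cospan admits no limit in $\CAlg(2\Cat{Pr})$ by the first paragraph. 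The genuinely hard input, namely the nonexistence of $\ker(\cofib)$, is already supplied by \cref{x90m7h}, so the remaining work is purely the symmetric monoidal bookkeeping: checking that $\mathrm{gr}$ is strong symmetric monoidal for the two-step filtered/graded structure and that the underlying pullback is identified with $\ker(\cofib)$ as claimed. I expect the verification that $\mathrm{gr}$ is strong (rather than merely lax) monoidal to be the main point requiring care.
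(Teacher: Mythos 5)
Your top-level reduction is sound and is genuinely different from the paper's route: the forgetful functor \(\CAlg(2\Cat{Pr})\to2\Cat{Pr}\) does preserve whatever limits exist, because it admits a left adjoint (the free commutative algebra functor, which exists since \(2\Cat{Pr}\) has colimits compatible with its tensor; this, rather than ``limits of algebras are computed underlying,'' is the airtight justification, since the creation statement presupposes that the underlying limit exists). So it would indeed suffice to realize the cospan of \cref{x90m7h} as the underlying cospan of one in \(\CAlg(2\Cat{Pr})\). The gap is that your key monoidality claim is false. For the \(\max\) structure on \([1]\), Day convolution on \(\Fun([1],\Cat{Pr}_{\st})\) is just the \emph{pointwise} tensor product (the relevant comma category has \((1,1)\) as terminal object), and \(\mathrm{gr}=(\mathrm{ev}_{0},\cofib)\) admits no strong symmetric monoidal structure for it. Indeed, take \(F\) to be any dominant morphism with \(F(0)=\Cat{Sp}\) that is not an equivalence, e.g.\ the rationalization \(\Cat{Sp}\to\Mod_{\mathbb{Q}}(\Cat{Sp})\) (dominant since \(\mathbb{Q}\) generates the target), and take \(G=(0\to\Cat{Sp})\). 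By \cref{x6g4gr}, \(\mathrm{gr}(F)=(\Cat{Sp},0)\), and \(\mathrm{gr}(G)=(0,\Cat{Sp})\), so your graded tensor gives \(\mathrm{gr}(F)\otimes\mathrm{gr}(G)\simeq(0,\Cat{Sp})\); but \(F\otimes G=(0\to F(1))\) pointwise, so \(\mathrm{gr}(F\otimes G)\simeq(0,\Mod_{\mathbb{Q}}(\Cat{Sp}))\not\simeq(0,\Cat{Sp})\). Worse, \(\mathrm{gr}(F)=(\Cat{Sp},0)\) is the same for all such \(F\) while \(\mathrm{gr}(F\otimes G)=(0,F(1))\) varies with \(F\), so no choice of symmetric monoidal structure on the target can repair this; the defect lies in using the pointwise tensor on the source. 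Your three-term splitting, with its summand \(\cofib(F)\otimes\cofib(G)\), is the associated-graded multiplicativity for the \emph{additive} Day convolution, not for \(\max\) --- that is the conflation --- and note that the failure occurs exactly on the objects your pullback consists of (dominant morphisms) tensored against objects from the other leg, so it cannot be avoided by restricting.

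The idea is plausibly reparable by using the additive (``square-zero'') Day convolution, \((F\otimes G)(1)=F(0)\otimes G(1)\amalg_{F(0)\otimes G(0)}F(1)\otimes G(0)\), together with the graded tensor on \(\Cat{Pr}_{\st}\times\Cat{Pr}_{\st}\) having no \((1,1)\)-term: then \(\mathrm{gr}_{1}(F\otimes G)\simeq F(0)\otimes\cofib(G)\oplus\cofib(F)\otimes G(0)\) (the quotient of a pushout by its common corner splits as a coproduct, \(\X\otimes\cat{B}\) preserves cofiber sequences, and coproducts agree with products by ambidexterity), which does match, and your identification of the underlying pullback with \(\ker(\cofib)\) still goes through, since \((\X,0)\) is a base change of \(0\to\Cat{Pr}_{\st}\) along the second projection. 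What would remain is to construct the \emph{coherent} symmetric monoidal structure on \(\mathrm{gr}\) in the non-stable ambient category \(\Cat{Pr}_{\st}\), where the standard filtered/graded technology does not apply verbatim; that is real, unfinished work. For contrast, the paper's proof sidesteps any monoidal enhancement of \(\cofib\) altogether: it applies the free commutative algebra functor \(\cat{C}\mapsto\bigoplus_{n\geq0}\cat{C}^{\otimes n}_{h\Sigma_{n}}\) to the cospan of \cref{x90m7h}, observes that these free algebras are graded and that \(\oplus=\prod\) by ambidexterity, and derives the contradiction by inspecting the degree-\(1\) piece of a putative limit.
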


\begin{proof}
  We apply the free commutative algebra functor
  \(2\Cat{Pr}\to\CAlg(2\Cat{Pr})\)
  to the diagram in \cref{x90m7h}.
  Concretely,
  it sends~\(\cat{C}\) to
  \(\bigoplus_{n\geq0}\cat{C}^{\otimes n}_{h\Sigma_{n}}\),
  which is also a product by ambidexterity.
  The pullback must also be graded,
  and by inspecting the graded piece in degree~\(1\),
  the result follows.
\end{proof}

\begin{lemma}\label{xq124c}
For a diagram \(\cat{C}'\to\cat{D}'\gets\cat{D}\)
  in \(\CAlg(2\Cat{Pr})\),
  the limit
  \begin{equation*}
    \begin{tikzcd}
      \cat{C}\ar[r]\ar[d]&
      \Mod_{\Mod_{\cat{C}'}(2\Cat{Pr})}(3\Cat{Pr})\ar[d]\\
      \Mod_{\Mod_{\cat{D}}(2\Cat{Pr})}(3\Cat{Pr})\ar[r]&
      \Mod_{\Mod_{\cat{D}'}(2\Cat{Pr})}(3\Cat{Pr})
    \end{tikzcd}
  \end{equation*}
  exists in \(\CAlg(4\Cat{Pr})\).
\end{lemma}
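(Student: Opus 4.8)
The plan is to construct the limit levelwise and then pass to the colimit over~$\kappa$. Since $4\Cat{Pr}=\injlim_{\kappa}4\Cat{Pr}^{\kappa}$ and $\CAlg$ preserves this filtered colimit along the symmetric monoidal transition functors, we have $\CAlg(4\Cat{Pr})\simeq\injlim_{\kappa}\CAlg(4\Cat{Pr}^{\kappa})$, and the given cospan is the image of one already defined at some level~$\kappa_{0}$. For each $\kappa\geq\kappa_{0}$, iterating \cref{xuizju,xm1vro} shows that $4\Cat{Pr}^{\kappa}=\Mod_{3\Cat{Pr}^{\kappa}}(\Cat{Pr}^{\kappa})$ lies in $\CAlg(\Cat{Pr}^{\kappa})$; in particular it is presentable, hence complete, and since the forgetful functor out of $\CAlg$ creates limits, the pullback $\cat{P}(\kappa)\in\CAlg(4\Cat{Pr}^{\kappa})$ exists. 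A direct mapping-space computation (using that mapping spaces in a filtered colimit of categories are the filtered colimits of the mapping spaces, and that filtered colimits commute with pullbacks in $\Cat{Ani}$) then shows that the lemma reduces to the assertion that the transition functors preserve these pullbacks, i.e.\ that $\cat{P}(\lambda)$ is the image of $\cat{P}(\kappa)$ for $\kappa_{0}\leq\kappa\leq\lambda$; granting this, the stabilized value $\injlim_{\kappa}\cat{P}(\kappa)$ is the desired limit.

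So everything comes down to transition-preservation at level~$4$, and this is exactly where the contrast with \cref{false} must be exploited. I would reduce it to a statement about $\kappa$-compact objects: by \cref{x8ypbm} each corner is the enriched $\lex{\kappa}$-presheaf category on its category of $\kappa$-compacts, the transition acts by postcomposition with $\Cat{Pr}^{\kappa}\to\Cat{Pr}^{\lambda}$, and (unwinding \cref{pb}) the $\kappa$-compacts of $\cat{P}(\kappa)$ are generated under $\kappa$-small colimits by the base changes of the compacts of the three corners, that is, by free modules. Transition-preservation then becomes the assertion that base change carries this generating family to one for the $\lambda$-compacts of $\cat{P}(\lambda)$, equivalently that the canonical comparison $\Cat{Pr}^{\lambda}\otimes_{\Cat{Pr}^{\kappa}}\cat{P}(\kappa)\to\cat{P}(\lambda)$ is both fully faithful and dominant in the sense of \cref{xgl0iq}. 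Full faithfulness I expect to be the easy half, following from the monomorphism-preservation already used in the proof of \cref{adjoint}.

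The hard part will be dominance. For the \emph{underlying} level-$2$ cospan this step genuinely fails: by \cref{x1814s} the dominant morphisms cutting out the fiber have complexity $c_{\kappa}\geq\kappa$, so their images under transition generate strictly larger subcategories, which is precisely how \cref{xgfihx} obstructs the level-$2$ limit. The mechanism that should rescue level~$4$ is the iterated module structure: the $\kappa$-compact objects of $\Mod_{\Mod_{\bullet}(2\Cat{Pr})}(3\Cat{Pr})$ are retracts of $\kappa$-small colimits of free modules, so the compacts of the pullback are built from those of the corners in a number of $T_{\kappa}$-steps bounded \emph{independently of~$\kappa$}: by \cref{x48xj0} the complexity grows by at most $\omega+2$ per cardinal jump, while the free-module description caps the base complexity at a value not depending on~$\kappa$. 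This uniform bound is what keeps base change dominant on the pullbacks, and I expect verifying it—the level-$4$ analogue of the very complexity estimate that \emph{diverges} at level~$2$—to be the crux of the argument.
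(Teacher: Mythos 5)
Your proposal has a genuine gap at exactly the point you yourself flag as ``the crux'': the uniform complexity bound that would make the levelwise pullbacks \(\cat{P}(\kappa)\) stable under the transition functors is never proven, only hoped for. The setup in your first paragraph is fine --- the cospan descends to some finite level because the colimit over regular cardinals is highly filtered, each \(4\Cat{Pr}^{\kappa}\) is presentable by iterating \cref{xuizju,xm1vro}, and stabilized levelwise pullbacks would indeed compute the limit --- but everything after that reduction is assertion. The claim that ``the free-module description caps the base complexity at a value not depending on~\(\kappa\)'' is supported by nothing in the paper: \cref{x48xj0} only controls the \emph{growth} of complexity by \(\omega+2\) per cardinal jump for a \emph{fixed} morphism, and \cref{x1814s} shows that base complexity can itself be as large as~\(\kappa\); no statement gives a \(\kappa\)-independent cap for the dominant morphisms cutting out these pullbacks, and you offer no argument, only the expectation that the module structure helps. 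Since this dominance step is precisely where the level-\(2\) argument of \cref{xgfihx,x90m7h} breaks down, an unproven expectation here is not a proof but a restatement of the problem.

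The paper's actual proof is one line and structurally different: each arrow of the cospan is an extension-of-scalars functor along a map of commutative algebras in \(3\Cat{Pr}\), and by ambidexterity such an arrow admits a \emph{left adjoint} in \(4\Cat{Pr}\); then \cite[Proposition~5.5.10]{Stefanich} applies, since a limit along morphisms admitting left adjoints can be computed as a colimit along those adjoints, and colimits pose no existence problem. This sidesteps compactness and complexity entirely --- which is the point: the quantitative apparatus of \cref{ss:cplx} exists to prove \emph{failure} of limits, and what rescues level~\(4\) is the presence of adjoints, not better bounds. Note also that your route, even if it could be completed, aims at something a priori stronger than the lemma (stabilization of the levelwise pullbacks under transition), which the paper's argument never establishes and which is not known to hold.
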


\begin{proof}
  We can apply~\cite[Proposition~5.5.10]{Stefanich},
  since each arrow has a left adjoint in~\(4\Cat{Pr}\)
  by ambidexterity.
\end{proof}

\begin{proof}[Proof of \cref{omega}]
  This follows from combining \cref{xswc9r,xq124c}:
  If \(\End_{\cat{C}}(\unit)\) does not exist presentably,
  \(\cat{C}\) is such an example.
  Otherwise,
  \(\End_{\cat{C}}(\unit)\) is such an example,
  since \(\End_{\End_{\cat{C}}(\unit)}(\unit)\) does not exist presentably.
\end{proof}

\bibliographystyle{plain}

\begin{thebibliography}{10}

\bibitem{n-rig-1}
Ko~Aoki.
\newblock Higher atoms and higher rigidity, in preparation.

\bibitem{Bird84}
G.~J. Bird.
\newblock {\em Limits in $2$-categories of locally-presentable categories}.
\newblock PhD thesis, University of Sydney, 1984.

\bibitem{Haugseng21}
Rune Haugseng.
\newblock On lax transformations, adjunctions, and monads in {$(\infty,2)$}-categories.
\newblock {\em High. Struct.}, 5(1):244--281, 2021.

\bibitem{Heine23}
Hadrian Heine.
\newblock An equivalence between enriched {$\infty$}-categories and {$\infty$}-categories with weak action.
\newblock {\em Adv. Math.}, 417:Paper No. 108941, 140, 2023.

\bibitem{Henry}
Simon Henry.
\newblock When does {$\operatorname{Ind}_\kappa(C^I) \simeq \operatorname{Ind}_\kappa(C)^I$}?, 2023.
\newblock \href{https://arxiv.org/abs/2307.06664v1}{\tt arXiv:2307.06664v1}.

\bibitem{Hinich23}
Vladimir Hinich.
\newblock Colimits in enriched {$\infty$}-categories and {Day} convolution.
\newblock {\em Theory Appl. Categ.}, 39:Paper No. 12, 365--422, 2023.

\bibitem{LurieHTT}
Jacob Lurie.
\newblock {\em Higher topos theory}, volume 170 of {\em Annals of Mathematics Studies}.
\newblock Princeton University Press, Princeton, NJ, 2009.

\bibitem{LurieHA}
Jacob Lurie.
\newblock Higher algebra.
\newblock Available at the author's website, 2017.

\bibitem{Masuda24}
Naruki Masuda.
\newblock {\em The algebra of categorical spectra}.
\newblock PhD thesis, Johns Hopkins University, 2024.

\bibitem{Stefanich}
Germán Stefanich.
\newblock Presentable $(\infty,n)$-categories, 2020.
\newblock \href{https://arxiv.org/abs/2011.03035v1}{\tt arXiv:2011.03035v1}.

\end{thebibliography}
\let\SS\oldSS  \newcommand{\yyyy}[1]{}

\end{document}